\crefname{section}{Section}{Sections}
\crefname{subsection}{\S}{\S\S}
\crefname{subsubsection}{\S}{\S\S}
\theoremstyle{plain}
\newtheorem{lemma}{Lemma}[section]
\newtheorem{proposition}[lemma]{Proposition}
\newtheorem{corollary}[lemma]{Corollary}
\newtheorem{theorem}[lemma]{Theorem}
\theoremstyle{nonumberplain}
\newtheorem{theoremN}{Theorem}
\theoremstyle{plain}
\newtheorem{definition}[lemma]{Definition}
\newtheorem{example}[lemma]{Example}
\newtheorem{remark}[lemma]{Remark}
\crefname{definition}{definition}{definitions}
\crefname{ex}{example}{examples}
\crefname{remark}{remark}{remarks}
\crefname{convention}{convention}{conventions}
\crefname{notation}{notation}{notations}
\crefname{table}{table}{tables}
\crefname{lemma}{lemma}{lemmas}
\crefname{proposition}{proposition}{propositions}
\crefname{corollary}{corollary}{corollaries}
\crefname{theorem}{theorem}{theorems}
\crefname{enumi}{}{}
\crefname{assumption}{assumption}{Assumptions}
\crefname{equation}{}{}
\numberwithin{equation}{section}
\theoremstyle{nonumberplain}
\newtheorem{proof}{Proof}
\newcommand\pf[1]{\newtheorem{#1}{Proof of \Cref{#1}}}
\newcommand\bC{{\mathbb C}}
\newcommand\bG{{\mathbb G}}
\newcommand\bR{{\mathbb R}}
\newcommand\bS{{\mathbb S}}
\newcommand\cC{{\mathcal C}}
\newcommand\cD{{\mathcal D}}
\newcommand\cF{{\mathcal F}}
\newcommand\cH{{\mathcal H}}
\newcommand\cM{{\mathcal M}}
\newcommand\cV{{\mathcal V}}
\newcommand\cW{{\mathcal W}}
\DeclareMathOperator{\id}{id}
\newcommand{\cat}[1]{\textsc{#1}}
\newcommand\spr[1]{\cite[\href{https://stacks.math.columbia.edu/tag/#1}{Tag {#1}}]{stacks-project}}
\newcommand{\qedhere}{\mbox{}\hfill\ensuremath{\blacksquare}}
\title{Monadic forgetful functors and (non-)presentability for $C^*$- and $W^*$-algebras}
\author{Alexandru Chirvasitu and Joanna Ko}
\begin{document}

\date{}

\newcommand{\Addresses}{{
  \bigskip
  \footnotesize

  \textsc{Department of Mathematics, University at Buffalo, Buffalo,
    NY 14260-2900, USA}\par\nopagebreak \textit{E-mail address}:
  \texttt{achirvas@buffalo.edu}
  \quad \\
  
  \textsc{Department of Mathematics and Statistics, Masaryk University, 
  Kotlářská 2,
  Brno 61137, Czech Republic}\par\nopagebreak \textit{E-mail address}:
  \texttt{joanna.ko.maths@gmail.com}

}}

\maketitle

\begin{abstract}
  We prove that the forgetful functors from the categories of $C^*$- and $W^*$-algebras to Banach $*$-algebras, Banach algebras or Banach spaces are all monadic, answering a question of J.Rosick\'{y}, and that the categories of unital (commutative) $C^*$-algebras are not locally-isometry $\aleph_0$-generated either as plain or as metric-enriched categories, answering a question of I. Di Liberti and Rosick\'{y}.

  We also prove a number of negative presentability results for the category of von Neumann algebras: not only is that category not locally presentable, but in fact its only presentable objects are the two algebras of dimension $\le 1$. For the same reason, for a locally compact abelian group $\mathbb{G}$ the category of $\mathbb{G}$-graded von Neumann algebras is not locally presentable.
\end{abstract}

\noindent {\em Key words: $C^*$-algebra; $W^*$-algebra; locally presented; locally generated; monadic; Beck's theorem; tripleability; enriched}

\vspace{.5cm}

\noindent{MSC 2020: 18C35; 18C15; 18D20; 46L05; 46L10; 46J10; 46L36}


\section*{Introduction}

Denote by $\cC^*_1$ and $\cC^*_{c,1}$ the categories of unital $C^*$- and unital commutative $C^*$-algebras respectively, and by $\cat{Ban}$ the category of Banach spaces with linear maps of norm $\le 1$ (i.e. {\it contractions}) as morphisms. The original impetus for the present paper was provided by \cite[Remark 5.2 (3)]{ros-mnd}, asking whether the forgetful functors
\begin{equation}\label{eq:gs}
  G:\cC^*_1\to \cat{Ban}\quad\text{and}\quad G_c:\cC^*_{c,1}\to \cat{Ban}
\end{equation}
are {\it monadic}. Settling for $\cC^*_1$ to fix ideas, recall ({\it tripleable} \cite[\S VI.3]{mcl} or \cite[\S 3.3]{bw}) that this means that $G$ induces an equivalence between $\cC^*_1$ and the {\it Eilenberg-Moore category} $\cat{Ban}^{GF}$ of algebras over the {\it monad} $FG$, where $F:\cat{Ban}\to \cC^*_1$ is the left adjoint of $G$.

Per \cite[\S VI.8]{mcl} or \cite[\S 3.31]{ar} the prototypical monadic functors are the forgetful functors $\cV\to \cat{Set}$ where $\cV$ is a {\it variety of algebras}: (possibly multi-sorted) sets equipped with operations required to satisfy various equations. Examples are the categories of semigroups, monoids, groups, rings, Lie algebras, modules over a fixed ring, etc. etc. The monadicity of the forgetful functors \Cref{eq:gs}, then, roughly says that the passage from Banach spaces to (unital) $C^*$-algebras is effected by freely adjoining a number of operations and equations.

We prove that the functors are indeed monadic in \Cref{cor:frgmnd}, along with a number of variations. Aggregating all of the claims of \Cref{th:cbanalg} and \Cref{cor:frgmnd}:

\begin{theoremN}
  The forgetful functors from the category $\cC^*_1$ of unital $C^*$-algebras to the categories of unital Banach $*$-algebras, unital Banach algebras and Banach spaces are all monadic.

  The same holds for commutative ($C^*$- and Banach) algebras.
\end{theoremN}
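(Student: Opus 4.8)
The plan is to verify, for each forgetful functor in question, the hypotheses of Beck's monadicity (tripleability) theorem: a functor $U\colon\cA\to\cB$ with a left adjoint is monadic provided it reflects isomorphisms and $\cA$ has, and $U$ preserves, coequalizers of $U$-split parallel pairs (equivalently, $U$ creates such coequalizers). Writing $\cat{Ban}^*_1$ and $\cat{Ban}_1$ for the categories of unital Banach $*$-algebras and unital Banach algebras, with contractive unital ($*$-)homomorphisms as morphisms, I would organize the forgetful functors into the chain
\[
  U_1\colon\cC^*_1\longrightarrow\cat{Ban}^*_1,\qquad
  U_2\colon\cat{Ban}^*_1\longrightarrow\cat{Ban}_1,\qquad
  U_3\colon\cat{Ban}_1\longrightarrow\cat{Ban},
\]
verify Beck's conditions for $U_1$ first, and then bootstrap to the composites $U_2U_1$ and $U_3U_2U_1$; the commutative assertions will come out by running the same steps inside the commutative subcategories.

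The two easy hypotheses I would dispatch first. \emph{Left adjoints} exist everywhere: the free unital $C^*$-algebra on a unital Banach $*$-algebra $B$ is the Hausdorff completion of $B$ for its largest $C^*$-seminorm, which is finite because a $C^*$-seminorm on a unital $*$-algebra is controlled, element by element, by any submultiplicative norm bounding the generators; composing with the free unital Banach algebra on a Banach space and the free unital Banach $*$-algebra on a unital Banach algebra handles the composites (alternatively one invokes the adjoint functor theorem with a solution-set estimate). And each $U_i$, hence each composite, \emph{reflects isomorphisms}: if the underlying contraction of a morphism is an isometric bijection, its inverse is again a contraction preserving multiplication, involution and unit, and in the $C^*$-case it is automatically isometric because injective $*$-homomorphisms of $C^*$-algebras are isometric.

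The substance is the \emph{coequalizer condition}. For a $U_1$-split pair $f,g\colon A\rightrightarrows B$ in $\cC^*_1$ (i.e.\ one for which $(U_1f,U_1g)$ admits a split coequalizer) that split coequalizer $q\colon B\to E$, being a split coequalizer, \emph{is} the coequalizer of $U_1f,U_1g$ in $\cat{Ban}^*_1$, namely $B/J$ with the quotient norm, where $J$ is the closed two-sided $*$-ideal of $B$ generated by $\{f(a)-g(a):a\in A\}$. Since that generating set is $*$-stable, $J$ is already $*$-closed and coincides with the closed two-sided ideal it generates — precisely the ideal through which the coequalizer of $f,g$ in $\cC^*_1$ is formed — and the quotient of a $C^*$-algebra by a closed two-sided ideal is again a $C^*$-algebra, the quotient norm being a $C^*$-norm; hence $E=B/J$ is that $\cC^*_1$-coequalizer and $U_1$ creates it. The identical argument (erasing every $*$) handles $U_2U_1$. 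For $U_3$, and for $U_3U_2$, I would appeal to the standard structure-transport proof that Banach algebras and Banach $*$-algebras are monadic over $\cat{Ban}$: a split coequalizer is an absolute colimit, so the multiplication (a contraction on the projective tensor square), the involution (a conjugate-linear isometry) and the unit all transport to it along the splitting, the algebra/$*$-axioms descending by the familiar diagram chase with the split-coequalizer identities and the norm estimates coming free from contractivity — crucially, no non-equational condition such as the $C^*$-identity enters. Finally, a $U_3U_2U_1$-split pair in $\cC^*_1$ is, after applying $U_1$, a $U_3U_2$-split pair in $\cat{Ban}^*_1$, whose coequalizer $U_3U_2$ creates (being monadic); by the computation just made that coequalizer is the $C^*$-algebra $B/J$, which is the coequalizer of the original pair in $\cC^*_1$. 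So $U_3U_2U_1$, and likewise $U_2U_1$, preserves and creates coequalizers of split pairs, and Beck's theorem delivers monadicity. Since free commutative ($C^*$- and Banach) algebras exist for the same reasons, ideal quotients of commutative algebras are commutative, and split coequalizers inherit commutativity, the commutative statements follow verbatim.

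The point demanding real care — the main obstacle — is that $C^*$-algebras are \emph{not} a variety over $\cat{Ban}$: the $C^*$-identity $\|x^*x\|=\|x\|^2$ is neither equational nor encoded by contractivity of the structure maps, and indeed none of $U_1$, $U_2U_1$, $U_3U_2U_1$ preserves arbitrary coequalizers (the ideal generated by a set of element-differences is normally far larger than its closed linear span). The whole argument turns on the fact that for a \emph{split} pair this discrepancy evaporates: the split coequalizer already coincides with the ideal-quotient of $B$, and that quotient of a $C^*$-algebra is again a $C^*$-algebra. Getting that identification exactly right — and carrying out the slightly fiddly but routine structure-transport for $U_3$ and $U_3U_2$ — is where essentially all of the work lies.
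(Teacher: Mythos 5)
Your proposal is correct and follows essentially the same route as the paper: the same factorization through unital Banach $*$-algebras, the same key computation that the quotient of a $C^*$-algebra by the closed $*$-ideal generated by the differences $f(a)-g(a)$ carries a $C^*$-quotient norm (so the first forgetful functor creates the relevant coequalizers), and the same reliance on the monadicity of Banach ($*$-)algebras over $\cat{Ban}$ (Rosick\'y's theorem, which you re-derive by the standard structure-transport along absolute coequalizers rather than citing). The only difference is packaging: the paper isolates the composition step as a general lemma (a CTT functor followed by a monadic one is monadic), whereas you check Beck's conditions for the composites directly via the explicit identification of the split coequalizer with $B/J$.
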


A number of other problems suggest themselves naturally, all circumscribed under the same general topic of studying operator algebras category-theoretically. For one thing, the obvious modification of the previous result goes through for {\it von Neumann} (or $W^*$-)algebras (see \Cref{th:wtocmnd} and \Cref{cor:wallmnd}, as well as the beginning of \Cref{se:wast} for a reminder on $W^*$-algebras):

\begin{theoremN}
  The forgetful functors from the category $\cW^*_1$ of $W^*$-algebras to the categories of $C^*$-algebras, unital Banach $*$-algebras, unital Banach algebras and Banach spaces are all monadic.
\end{theoremN}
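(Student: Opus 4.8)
The plan is to verify Beck's monadicity (tripleability) theorem \cite[\S VI.7]{mcl} for each of the four forgetful functors, and the first move is to reduce everything to the single functor $G\colon\cW^*_1\to\cC^*_1$. The other three functors factor as $\cW^*_1\xrightarrow{G}\cC^*_1\to\mathcal X$, with $\mathcal X$ running over the categories of unital Banach $*$-algebras, unital Banach algebras and Banach spaces; the second leg is monadic by \Cref{th:cbanalg} and \Cref{cor:frgmnd}, and a composite of monadic functors $\mathcal A\to\mathcal B\to\mathcal C$ is again monadic whenever $\mathcal A$ has coequalizers of reflexive pairs. Here $\mathcal A=\cW^*_1$, and $\cW^*_1$ in fact has all coequalizers: the coequalizer of a parallel pair $f,g\colon M\to N$ of normal unital $*$-homomorphisms is the cut-down $N\to N(1-z)$, where $z$ is the central projection of $N$ generating the weak-$*$-closed two-sided ideal generated by $\{f(m)-g(m):m\in M\}$ — the universal property being immediate from the bijection between weak-$*$-closed ideals and central projections. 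Thus the work is concentrated in proving $G\colon\cW^*_1\to\cC^*_1$ monadic, i.e.\ the promised ``obvious modification'' of the $C^*$-case.

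For the Beck criterion applied to $G$, the \emph{left adjoint} is the universal enveloping von Neumann algebra $A\mapsto A^{**}$ (with its Arens product and the canonical embedding $A\hookrightarrow A^{**}$ as unit), by the classical universal property that every $*$-homomorphism from $A$ into a von Neumann algebra extends uniquely to a \emph{normal} $*$-homomorphism out of $A^{**}$. The functor $G$ \emph{reflects isomorphisms}: if a normal unital $*$-homomorphism $\varphi\colon M\to N$ is bijective, then $\varphi^{-1}$ is a $*$-isomorphism of von Neumann algebras, hence automatically normal, so $\varphi$ is invertible in $\cW^*_1$.

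The remaining — and essential — point is that $\cW^*_1$ has coequalizers of $G$-split pairs and $G$ preserves them. Let $f,g\colon M\to N$ be a parallel pair in $\cW^*_1$ together with a split coequalizer $N\xrightarrow{q}C$ in $\cC^*_1$ and maps $s\colon C\to N$, $t\colon N\to M$ in $\cC^*_1$ with $qs=\id_C$, $ft=\id_N$, $gt=sq$. Split coequalizers being absolute, $q$ is also the coequalizer of $(f,g)$ in $\cat{Ban}$, so by \Cref{th:cbanalg} and \Cref{cor:frgmnd} (monadicity of $\cC^*_1\to\cat{Ban}$) the norm-closed two-sided ideal $J\subseteq N$ generated by $\{f(m)-g(m)\}$ equals $\ker q$; in particular $J$ is Banach-space complemented, with complement $s(C)$, and $C\cong N/J$. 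On the other hand the coequalizer of $(f,g)$ already computed in $\cW^*_1$ is the cut-down by the weak-$*$-closure $\overline J$ of $J$, so it remains to show $J=\overline J$, i.e.\ \emph{the ideal $J$ is weak-$*$-closed}: then $C\cong N/J$ acquires a von Neumann structure making $q$ normal, $q$ is simultaneously the coequalizer in $\cW^*_1$, and $Gq=q$ is by construction the coequalizer in $\cC^*_1$, so $G$ preserves it. Equivalently, the claim is that the $*$-homomorphic retract $C$ of the von Neumann algebra $N$ is again a von Neumann algebra. A foothold: the idempotent $*$-endomorphism $e:=sq$ of $N$ also equals $g\circ t$, so its range $s(C)=e(N)$ lands inside the von Neumann subalgebra $g(M)\subseteq N$ (the image of a normal $*$-homomorphism is weak-$*$-closed), reducing matters to weak-$*$-closedness of $s(C)$ within $g(M)$. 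I expect this last step — upgrading a $*$-homomorphic retract of a von Neumann algebra inside $\cC^*_1$ to a genuine von Neumann subalgebra, via Sakai's characterization of $W^*$-algebras as dual Banach spaces together with the structure of second duals — to be the main obstacle; once it is in hand, everything else is formal.
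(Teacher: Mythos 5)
Your reduction of the three Banach-level functors to the single functor $G\colon\cW^*_1\to\cC^*_1$ rests on a false lemma: it is \emph{not} true that a composite of monadic functors is monadic whenever the top category has reflexive coequalizers. For instance, the inclusion of torsion-free abelian groups into $\cat{Ab}$ is monadic (it is reflective, with idempotent monad $A\mapsto A/\mathrm{tors}(A)$, whose algebras are exactly the torsion-free groups), $\cat{Ab}\to\cat{Set}$ is monadic, and torsion-free abelian groups form a cocomplete category; yet the composite is not monadic, since the induced monad on $\cat{Set}$ is the free-abelian-group monad, whose Eilenberg--Moore category is all of $\cat{Ab}$. What is true, and what the paper uses (\Cref{le:cttptt}, after Barr--Wells), is that a CTT functor followed by a monadic functor is monadic: you need $\cW^*_1\to\cC^*_1$ to \emph{preserve} reflexive coequalizers, not merely that $\cW^*_1$ has them. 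Since you only verify the PTT condition for $G$ (coequalizers of $G$-split pairs), the Banach-level statements are not covered; and running Beck directly on, say, $\cW^*_1\to\cat{Ban}$ would force you to handle pairs split merely in $\cat{Ban}$, where the splitting maps $s,t$ are just contractions rather than $*$-homomorphisms, so your argument (which leans on $e=sq=gt$ being a $*$-endomorphism) does not apply there.

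For $G$ itself, the left adjoint $A\mapsto A^{**}$ and isomorphism-reflection agree with the paper, but the essential analytic step is exactly what you leave open (``the main obstacle''), so the proposal is incomplete where the work lies; worse, the surrounding reductions leak. What you actually need is that $\ker q$ be $\sigma$-weakly closed, i.e.\ that $q$ be normal; this is \emph{not} equivalent to $C$ being abstractly a $W^*$-algebra, nor does it follow from $s(C)=e(N)$ being $\sigma$-weakly closed. Indeed, for a free-ultrafilter character $\chi$ of $\ell^\infty$, the idempotent $*$-endomorphism $a\mapsto\chi(a)1$ has $\sigma$-weakly closed range $\bC 1$ and quotient $\bC$ a von Neumann algebra, yet its kernel is not $\sigma$-weakly closed; so passing from ``range closed'' to ``kernel closed'' needs a genuine argument exploiting the full splitting data, which you do not supply. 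The paper takes a different route that avoids $G$-split pairs altogether: it proves the stronger CTT property for $\cW^*_1\to\cC^*_1$ (\Cref{th:wtocmnd}), using that for a \emph{reflexive} pair the common section $t$ is itself a normal morphism, so that $\ker\partial_i=\ker(t\partial_i)$ are $\sigma$-weakly closed ideals cut out by central projections, and then deduces the Banach-level cases from \Cref{cor:frgmnd} via \Cref{le:cttptt}.
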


In another direction, the tasks of proving monadicity, constructing (co)limit or adjoint functors, and so on simplify considerably when the categories in question are technically ``reasonable''. This might mean, perhaps, that every object can be recovered as a colimit of appropriately ``small'' objects. To recall, briefly, the relevant concepts from \cite[\S 1.B]{ar}:

\begin{definition}\label{def:lp}
  Let $\kappa$ be a regular cardinal, i.e. one that is not a union of strictly fewer strictly smaller sets.
  \begin{itemize}
  \item A poset $(I,\le)$ is {\it $\kappa$-directed} if every subset of cardinality $<\kappa$ has an upper bound.
  \item A {\it $\kappa$-directed colimit} in a category $\cC$ is a colimit of a functor $I\to \cC$ for a $\kappa$-directed poset $(I,\le)$, where the latter is regarded as a category with exactly one arrow $i\to j$ whenever $i\le j$.
  \item An object $c\in \cC$ is {\it $\kappa$-presentable} if $\mathrm{hom}_{\cC}(c,-):\cC\to \cat{Set}$ preserves $\kappa$-directed colimits. An object is {\it presentable} if it is $\kappa$-presentable for some $\kappa$.
  \item A category $\cC$ is {\it locally $\kappa$-presentable} if it is cocomplete and has a set $S$ of $\kappa$-presentable objects so that every object is a $\kappa$-directed colimit of objects from $S$. A category is {\it locally presentable} if it is locally $\kappa$-presentable for some $\kappa$.
  \end{itemize}  
\end{definition}

It has been known for some time that the categories $\cC^*_1$ and $\cC^*_{c,1}$ are locally $\aleph_1$-presentable (this follows, for instance, from \cite[Theorem 3.28]{ar} and the fact that they are varieties of infinitary algebras \cite[Theorem 2.4]{pr1}). It is natural to ask whether the category of $W^*$-algebras is also locally presentable:
\begin{itemize}
\item on the one hand it is certainly cocomplete (\Cref{pr:wccmpl}, \cite[Proposition 5.7]{krn-qcoll}, etc.);
\item while on the other hand the local-presentability claim does seem to occur in some of the literature (see \Cref{subse:npres} for examples).
\end{itemize}

We will nevertheless prove below a strong negation of local presentability for $\cW^*_1$ (\Cref{th:nopres} and \Cref{pr:grnpres}):

\begin{theoremN}
  The only presentable objects in the category $\cW^*_1$ of von Neumann algebras are $\{0\}$ and $\bC$.

  As a consequence, for any locally compact abelian group $\bG$, the category $\cW^*_{1,\bG}$ of $\bG$-graded von Neumann algebras in the sense of \Cref{def:grd} is not locally presentable.
\end{theoremN}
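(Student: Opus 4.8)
The plan is to produce, for every regular cardinal $\kappa$ and every von Neumann algebra $M$ with $\dim M\ge 2$, an explicit $\kappa$-directed colimit in $\cW^*_1$ that $\mathrm{hom}_{\cW^*_1}(M,-)$ fails to preserve; the construction is powered by the left adjoint of the forgetful functor $U\colon\cW^*_1\to\cC^*_1$. That functor is monadic (\Cref{th:wtocmnd}), hence has a left adjoint $L$, which on a $C^*$-algebra $A$ is the enveloping von Neumann algebra $A^{**}$ (with its Arens product), characterised by the fact that every $*$-homomorphism $A\to N$ into a von Neumann algebra extends uniquely to a normal one $A^{**}\to N$. Being a left adjoint, $L$ preserves all colimits: whenever $(A_i)_i$ is a diagram in $\cC^*_1$ with colimit $A$, the diagram $(A_i^{**})_i$ has colimit $A^{**}$ in $\cW^*_1$. (That $\{0\}$ and $\bC$ are presentable is the easy half: $\bC$ is initial and $\{0\}$ terminal, so their $\mathrm{hom}$-functors are essentially constant, and one only needs that a directed colimit of nonzero von Neumann algebras is nonzero; I take this for granted and concentrate on showing nothing else is presentable.)

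The key step is the case $M=\bC^2$, for which $\mathrm{hom}_{\cW^*_1}(\bC^2,N)=\mathrm{Proj}(N)$ naturally in $N$. Fix a regular $\kappa$, let $2^\kappa=\{0,1\}^\kappa$ be the Cantor cube, and for $S$ in the $\kappa$-directed poset $[\kappa]^{<\kappa}$ let $C(2^S)\hookrightarrow C(2^\kappa)$ be the unital $C^*$-subalgebra of functions factoring through the coordinate projection $\pi_S\colon 2^\kappa\twoheadrightarrow 2^S$. The legs are injective, and since a continuous function on a product of two-point spaces depends on only countably many coordinates one has $C(2^\kappa)=\operatorname{colim}^{\cC^*_1}_S C(2^S)$ (the union of the $C(2^S)$ is already everything when $\kappa\ge\aleph_1$, and is dense when $\kappa=\aleph_0$ with $S$ ranging over finite sets). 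Applying $L$ and using that the second adjoint of an injective $*$-homomorphism is injective, $C(2^\kappa)^{**}=\operatorname{colim}^{\cW^*_1}_S C(2^S)^{**}$ with injective legs, so $\operatorname{colim}_S\mathrm{Proj}\!\big(C(2^S)^{**}\big)=\bigcup_S\mathrm{Proj}\!\big(C(2^S)^{**}\big)\subseteq\mathrm{Proj}\!\big(C(2^\kappa)^{**}\big)$. I would then exhibit a projection in the target outside this union: the bidual of a commutative $C^*$-algebra $C(X)$ has the atomic part $\ell^\infty(X)$ as a direct summand, so for a point $x^\ast\in 2^\kappa$ the indicator $\chi_{\{x^\ast\}}$ is a nonzero projection in $C(2^\kappa)^{**}$; and since the leg $C(2^S)^{**}\to C(2^\kappa)^{**}$ is the dual of the pushforward $M(2^\kappa)\to M(2^S)$, any preimage $\eta$ of $\chi_{\{x^\ast\}}$ would have to satisfy $\langle\eta,\delta_{\pi_S(y)}\rangle=[y=x^\ast]$ for all $y$, which is contradictory because, as $|S|<\kappa$, some $y\neq x^\ast$ lies in the fibre of $\pi_S$ over $\pi_S(x^\ast)$. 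Thus $\mathrm{hom}_{\cW^*_1}(\bC^2,-)$ does not preserve this $\kappa$-directed colimit; as $\kappa$ was arbitrary, $\bC^2$ is not presentable.

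For a general $M$ with $\dim M\ge 2$: if $M$ has at least two normal characters (equivalently, two minimal central projections with one-dimensional corners) then for a suitable projection $p$ the inclusion $\bC^2\cong\bC p\oplus\bC(1-p)\hookrightarrow M$ admits a $*$-homomorphic retraction, so $M$ has the non-presentable object $\bC^2$ as a retract and is therefore not presentable. The remaining case — $M$ a non-commutative factor, or a diffuse abelian algebra, where the abelian colimit above carries too few (or no) normal morphisms out of $M$ — is the main obstacle, and I expect it to need a genuinely non-commutative version of the construction: take $L$ of the coproduct $\coprod_{i<\kappa}\bC^2$ formed in $\cC^*_1$ (the full $C^*$-free product), realised as the $\kappa$-directed colimit of its $<\kappa$-indexed sub-coproducts with injective legs, and produce a normal morphism from $M$, or from a suitable ampliation-absorbing quotient of $M$, into this colimit whose image is not supported on any $<\kappa$ subfamily of the free factors; this would draw on the type decomposition and Murray--von Neumann comparison theory of $M$ to "spread $M$ across all $\kappa$ coordinates". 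With that in hand the non-preservation argument runs as in the $\bC^2$ case. This establishes that the presentable objects of $\cW^*_1$ are exactly $\{0\}$ and $\bC$; in particular, since these two — a terminal and an initial object — separate no pair of parallel morphisms, a cocomplete category whose only presentable objects are these cannot be locally presentable.

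Finally, the statement for $\bG$-graded von Neumann algebras. The trivial-grading functor $i\colon\cW^*_1\to\cW^*_{1,\bG}$ of \cref{def:grd} is fully faithful (a $*$-homomorphism between trivially graded algebras is automatically grading-compatible) and is left adjoint to the coinvariants/degree-$e$-part functor $\cW^*_{1,\bG}\to\cW^*_1$, because an equivariant morphism out of a trivially graded algebra has image in the coinvariants; in particular $i$ preserves all colimits, hence sends $\kappa$-directed colimits to $\kappa$-directed colimits. Consequently, if $(M,\mathrm{triv})$ were $\kappa$-presentable in $\cW^*_{1,\bG}$ then, using $\mathrm{hom}_{\cW^*_1}(M,N)=\mathrm{hom}_{\cW^*_{1,\bG}}(iM,iN)$ and $i\!\left(\operatorname{colim}_j N_j\right)=\operatorname{colim}_j(iN_j)$, the algebra $M$ would be $\kappa$-presentable in $\cW^*_1$. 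Taking $M=\bC^2$, which the previous step shows is not presentable, we obtain a non-presentable object of $\cW^*_{1,\bG}$; as $\cW^*_{1,\bG}$ is cocomplete (as is $\cW^*_1$, cf.\ \cref{pr:wccmpl}), it is not locally presentable. The steps I expect to cost the most effort are the non-commutative wild colimit in the factor case of the first assertion and — to a much lesser degree — verifying the adjunction $i\dashv(-)^{\mathrm{coinv}}$ and cocompleteness directly from \cref{def:grd}.
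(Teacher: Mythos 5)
Your treatment of $\bC^2$ is correct and is a genuine variant of the paper's: you realize a wild $\kappa$-directed colimit as $C(2^\kappa)^{**}=\varinjlim_S C(2^S)^{**}$ using the left adjoint $(-)^{**}$ and kill factorization with an atomic projection $\chi_{\{x^*\}}$, whereas the paper works directly with $\ell^\infty(\kappa_*)$ as the $\kappa$-directed colimit of the subalgebras $C_S$ generated by $<\kappa$ minimal projections (\Cref{le:discr-colim}, \Cref{pr:ab}); both arguments are sound. Your deduction for $\cW^*_{1,\bG}$ via the trivial-grading functor being a left adjoint to the invariants functor is exactly the paper's \Cref{pr:grnpres} (and you do not even need cocompleteness of $\cW^*_{1,\bG}$: local presentability alone forces every object to be presentable).

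The genuine gap is the heart of the first assertion: every von Neumann algebra of dimension $\ge 2$ \emph{other} than those with two one-dimensional central summands. Your retract trick needs two normal characters, so it already fails for $L^\infty[0,1]$ (diffuse abelian algebras have no normal characters, hence no normal surjection onto $\bC^2$), for $M_n$, for $B(\cH)$, and for all II/III factors. For these you only offer an expectation --- apply $(-)^{**}$ to $\coprod_{i<\kappa}\bC^2$ and ``spread $M$ across all $\kappa$ coordinates'' via comparison theory --- with no construction of the required normal morphism and no argument that it cannot factor through a $<\kappa$ sub-coproduct; as stated this is a conjecture, not a proof. The paper's route shows what is actually needed: reductions along Cartesian factors and along $B\widehat{\otimes}-$ (using that the maximal tensor product preserves directed colimits, flatness of the relevant tensorand so that $\widehat\otimes$ may be replaced by $\overline\otimes$, injectivity of $\overline\otimes$ on injections, and the commutant formula $(B\overline{\otimes}A)\cap(B\overline{\otimes}A_i)=B\overline{\otimes}(A\cap A_i)$), then the type decomposition to reduce to abelian algebras and $M_n$; and the $M_n$ case (\Cref{pr:mtrx}) requires a genuinely new device --- the $\kappa$-directed family $B_S=B(\cH_S)\oplus M_{2,S^c}$ inside $B(\cH)$, whose colimit only \emph{surjects} onto $B(\cH)$, with $2$-homogeneity used to lift a morphism $M_2\to B(\cH)$ across the central-projection quotient so that it cannot factor through any $B_S$. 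Nothing in your proposal substitutes for these steps, so the main statement remains unproved beyond the $\bC^2$-retract case.
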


The notion of presentability (for an object) captures the intuition of being definable by ``few generators and relations'': in, say, categories of modules, being $\aleph_0$-presentable in the sense of \Cref{def:lp} is equivalent to being finitely-presentable in the usual sense (finitely many generators, finitely many relations) \cite[\S 3.10 (2)]{ar}. 

There is, similarly, an abstract formulation for the weaker notion of being definable by few generators (regardless of relations). Having fixed an appropriate class $\cM$ of monomorphisms in a category $\cC$ (half of a {\it factorization system} \cite[Definition 14.1]{ahs}; those details are not crucial here), recall, say, \cite[p.15, Definition]{ar-what} or \cite[Definitions 2.1 and 2.5]{dr}:

\begin{definition}\label{def:lg}
  Let $\cC$ be a cocomplete category and $\kappa$ a regular cardinal.
  \begin{itemize}
  \item An object $c\in \cC$ is {\it $\kappa$-generated with respect to (wrt) $\cM$} if $\mathrm{hom}_{\cC}(c,-)$ preserves $\kappa$-directed colimits of morphisms from $\cM$.
  \item $\cC$ is {\it $\cM$-locally $\kappa$-generated} if there is a set $S$ of $\kappa$-generated objects wrt $\cM$ such that every object is expressible as a $\kappa$-directed colimit of $\cM$-morphisms between objects from $S$.
  \item $\cC$ is {\it $\cM$-locally generated} if it is $\cM$-locally $\kappa$-generated for some regular cardinal $\kappa$.
  \end{itemize}
\end{definition}

When $\cM$ is the class of all monomorphisms this recovers \cite[Definition 1.67]{ar}, and as for presentability, the concept embodies the right intuition in familiar cases: an object in a category of modules is $\aleph_0$-generated wrt the class of monomorphisms if and only if it is finitely-generated in the usual sense \cite[\S 3.10 (1)]{ar}.

\cite[Remark 6.10]{dr} speculates that the category $\cC^*_1$ is not isometry-locally $\aleph_0$-generated. The $\aleph_0$-generation notion discussed there is not {\it quite} that of \Cref{def:lg}, but rather an enriched version thereof \cite[Definitions 4.1 and 4.4]{dr}: everything in sight (Banach spaces/algebras, $C^*$-algebras) is regarded as enriched over the category $\cat{CMet}$ of complete generalized metric spaces, where `generalized' means the distance is allowed values in $[0,\infty]$. One can then reiterate \Cref{def:lg} by requiring that the colimit-preservation condition
\begin{equation*}
  \mathrm{hom}(c,\ \varinjlim_i c_i)\cong \varinjlim_i \mathrm{hom}(c,c_i)
\end{equation*}
take place in the enriching category $\cat{CMet}$ instead of $\cat{Set}$. Either way (enriched or not), we can confirm that speculation: see \Cref{pr:fdim}, \Cref{cor:ngen}, \Cref{pr:enrcfdim} and \Cref{cor:enrcngen}.

\begin{theoremN}
  Let $A$ be a commutative unital $C^*$-algebra and $\cM$ the class of isometric $C^*$ morphisms.
  \begin{enumerate}[(a)]
  \item $A$ is $\aleph_0$-generated wrt $\cM$ in the (plain or enriched) category $\cC^*_{c,1}$ if and only if it is finite-dimensional.
  \item $A$ is $\aleph_0$-generated wrt $\cM$ in the ordinary category $\cC^*_1$ if and only if it has dimension $\le 1$.
  \item $A$ is $\aleph_0$-generated wrt $\cM$ in the $\cat{CMet}$-enriched category $\cC^*_1$ if and only if it is finite-dimensional.
  \end{enumerate}
  Consequently, $\cC^*_1$ and $\cC^*_{c,1}$ are not isometry-locally $\aleph_0$-generated, either as plain categories or as $\cat{CMet}$-enriched categories.
\end{theoremN}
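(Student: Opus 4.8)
The plan is to first reduce $\aleph_0$-generation to a factorization statement. Since $\aleph_0$-directed colimits are ordinary filtered colimits and every $\cM$-morphism is an isometric (hence injective) unital $*$-homomorphism, a filtered colimit $B=\varinjlim_i B_i$ along $\cM$ is the completion $\overline{\bigcup_i B_i}$ with each $B_i\hookrightarrow B$ isometric. Because the structure maps $B_i\hookrightarrow B$ are injective, the comparison map $\varinjlim_i\mathrm{hom}(A,B_i)\to\mathrm{hom}(A,B)$ is automatically injective; thus in the plain category $A$ is $\aleph_0$-generated wrt $\cM$ iff every morphism $A\to B$ factors through some stage $B_i$, and in the $\cat{CMet}$-enriched category iff, in addition, every morphism $A\to B$ is a sup-norm limit of morphisms factoring through stages (the comparison is an isometry onto its image automatically, from isometricity of the transition maps). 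I will use throughout that a morphism $\bC^n\to B$ is the same as an orthogonal family of projections $p_1,\dots,p_n\in B$ with $\sum_k p_k=1$.

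For the commutative statements I exploit that in a commutative $C^*$-algebra two distinct projections lie at distance exactly $1$. Given a morphism $\bC^n\to B$ into a commutative limit, each $p_k$ lies in $\overline{\bigcup_i B_i}$, so some self-adjoint element of a stage approximates it to within $<1$; continuous functional calculus then produces a genuine projection in that stage at distance $<1$ from $p_k$, which must therefore equal $p_k$. Taking a common upper bound of the finitely many stages shows the morphism factors exactly, proving the ``if'' direction of (a) in the plain category; the enriched case of (a) follows at once, since the comparison is then a bijective isometry of complete metric spaces. For the converse of (a), let $A=C(X)$ with $X$ infinite and fix a non-isolated point $x_0$ (such a point exists, as an infinite compact Hausdorff space cannot be discrete); for each open $U\ni x_0$ set $A_U=\{f\in C(X):f|_U\equiv f(x_0)\}$, the subalgebra dual to the collapse of $U$. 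These form a directed family of proper unital subalgebras whose union is dense (flatten any $f$ near $x_0$ via a Urysohn function), so $A=\varinjlim_U A_U$; since each $A_U\subsetneq A$, the identity does not factor through a stage, so $A$ is not $\aleph_0$-generated. This commutative system is reused below.

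The general-target cases rest on the failure of projection rigidity. In (b), the objects of dimension $\le 1$ are the initial and terminal objects, which are trivially $\aleph_0$-generated. For $A=\bC^2$ I take $B=\cK(\ell^2)+\bC 1=\varinjlim_n\bigl(M_n\oplus\bC\bigr)$, where $M_n\oplus\bC$ acts as matrices on the first $n$ coordinates and as a scalar on the rest; the rank-one projection $p$ onto a vector with all coordinates nonzero lies in $B$ but, being rank one and nonscalar on every tail, lies in no stage, so $(1,0)\mapsto p$ gives a morphism $\bC^2\to B$ factoring through no stage. The same $p$ handles every finite-dimensional commutative $A$ of dimension $\ge 2$, while infinite-dimensional commutative $A$ are excluded by the system of the previous paragraph (equally a system in $\cC^*_1$ with the same colimit); this proves (b). For the ``if'' direction of (c) the point is that approximate factorization now suffices: the defining relations of a finite-dimensional $C^*$-algebra are stable, so an approximating self-adjoint tuple in a stage can be perturbed to an exact orthogonal family of projections summing to $1$ there, yielding stage morphisms converging in sup-norm to the given one. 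For the converse of (c) I reuse $A=\varinjlim_U A_U$ and show $\mathrm{id}_A$ is not an enriched colimit element: choosing for each $U$ a point $y_U\in U\setminus\{x_0\}$ and a Urysohn function $a_U$ with $a_U(x_0)=0$ and $a_U(y_U)=1$, any morphism factoring through $A_U$ sends $a_U$ to a function constant on $U$, forcing sup-distance $\ge 1/2$ from $\mathrm{id}_A$; hence $\mathrm{id}_A$ stays at distance $\ge 1/2$ from all stage morphisms.

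Finally, for the ``consequently'' clause I argue by contradiction using $C[0,1]$, which is commutative, infinite-dimensional, and has connected (hence not totally disconnected) spectrum. If $\cC^*_{c,1}$ were isometry-locally $\aleph_0$-generated (plain or enriched), then $C[0,1]$ would be a filtered colimit of isometric embeddings of $\aleph_0$-generated objects, i.e.\ by (a) of finite-dimensional commutative algebras, making it a commutative AF algebra with totally disconnected spectrum, which is impossible. For $\cC^*_1$, any isometric embedding into the commutative $C[0,1]$ has commutative domain, so the generating stages would be commutative $\aleph_0$-generated objects of $\cC^*_1$; by (b) these are just $\bC$ in the plain case, and by (c) finite-dimensional in the enriched case, so $C[0,1]$ would equal $\overline{\bigcup}$ of copies of $\bC$, respectively a commutative AF algebra, again impossible. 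I expect the main obstacle to be the bookkeeping around the $\cat{CMet}$-enriched colimits --- verifying that conical filtered colimits along $\cM$ have underlying algebra $\overline{\bigcup_i B_i}$ and underlying hom-object the completion of $\bigcup_i\mathrm{hom}(A,B_i)$, so that the plain arguments transport --- together with the stability lemma for finite-dimensional relations underlying the ``if'' direction of (c).
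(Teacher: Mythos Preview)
Your proposal is correct and complete, but it diverges from the paper's argument in several concrete places, and the differences are worth noting.

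\textbf{Infinite-dimensional $A$ are not $\aleph_0$-generated.} The paper embeds $C(X)$ into $C(\beta X_d)$ (the Stone--\v{C}ech compactification of the underlying discrete set) and writes the latter as a filtered colimit of finite-dimensional $C(X_i)$; the embedding then forces $C(X)$ to factor through some $C(X_i)$. You instead build the internal system $A_U=\{f\in C(X):f|_U\equiv f(x_0)\}$ indexed by open neighbourhoods of a non-isolated point, with colimit $C(X)$ and identity not factoring through any stage. Your construction is more elementary (no Stone--\v{C}ech) and, crucially, it recycles cleanly for the enriched converse in (c): your Urysohn element $a_U$ gives the uniform $\tfrac12$-gap directly, whereas the paper has to redo the Stone--\v{C}ech argument with a separate distance estimate.

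\textbf{Finite-dimensional $A$ are $\aleph_0$-generated in $\cC^*_{c,1}$.} The paper dualizes via Gelfand and proves the topological statement that a continuous map from a cofiltered limit of compacta to a finite discrete set factors through a stage, by a compactness argument on $\mathrm{diff}(\pi)$. You instead use that distinct projections in a commutative $C^*$-algebra are at distance exactly~$1$, so an approximate projection in a stage must equal the target projection. This is shorter and stays on the algebra side.

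\textbf{$\bC^2$ is not $\aleph_0$-generated in $\cC^*_1$.} The paper quotes Lazar's projection in the CAR algebra $\varinjlim M_{2^n}$. Your example $\cK(\ell^2)+\bC 1=\varinjlim_n(M_n\oplus\bC)$ with a rank-one projection onto a vector with full support is more hands-on and self-contained; the paper's example has the advantage of being a simple (unital, nuclear) algebra, but that extra structure is not needed here.

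\textbf{Minor points.} Your appeal to ``stability of the relations of a finite-dimensional $C^*$-algebra'' for the forward direction of (c) is exactly what the paper proves by hand in its Proposition~3.5 (and extends to matrix algebras in Proposition~3.8 via a presentation by two unitaries); you are invoking a standard semiprojectivity fact, which is fine, but be aware that the paper does not take it as a black box. Also, your ``within $<1$'' in the functional-calculus step of (a) should really be ``within $<\tfrac12$'' (or similar) for the spectral cut to produce a projection; the conclusion is unchanged. Finally, for the enriched-colimit bookkeeping you flag at the end: the paper sidesteps this for $\cC^*_{c,1}$ by proving (Proposition~3.4) that all hom-metrics there are discrete with distance $2$, so the enriched and plain cases literally coincide---a route you might find cleaner than verifying the completion description directly.
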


\subsection*{Acknowledgements}

The first author was partially funded by NSF grant DMS-2001128.

We are grateful to J.Rosick\'{y} for many insightful comments and questions.

\section{Preliminaries}\label{se:prel}

The requisite category-theoretic background is amply covered in, say, \cite{mcl,bw,ar}, with more precise references given below, as needed.

For objects $x,y$ of a category $\cC$, both $\mathrm{hom}_{\cC}(x,y)$ and $\cC(x,y)$ denote the respective set of morphisms, and we depict adjunctions $(F,G)$ with $F$ as the left adjoint as $F\dashv G$.

We also assume some material on operator algebras ($C^*$ or $W^*$), for which the reader can consult any number of excellent sources (some cited below in more detail): \cite{blk,tak1,dixc,dixw}, etc.

We write
\begin{itemize}
\item $\cC^*_1$ and $\cC^*_{c,1}$ for the categories of unital (commutative) $C^*$-algebras respectively;
\item $\cat{BanAlg}_1$ and $\cat{BanAlg}_{c,1}$ for the categories of unital (commutative) Banach algebras respectively;
\item $\cat{BanAlg}^*_1$ and $\cat{BanAlg}^*_{c,1}$ for the categories of unital (respectively commutative) Banach $*$-algebras;
\item and finally, $\cat{Ban}$ for the category of complex Banach spaces with contractions (these being the ``appropriate'' morphisms when handling Banach spaces category-theoretically \cite[\S 1.48]{ar}).
\end{itemize}

\section{Monadic categories of $C^*$-algebras}\label{se:cast}

\cite[Remark 5.2 (3)]{ros-mnd} asks whether the forgetful functors
\begin{equation*}
  G:\cC^*_1\to \cat{Ban}\quad\text{and}\quad G_c:\cC^*_{c,1}\to \cat{Ban}
\end{equation*}
are {\it monadic} \cite[\S VI.3]{mcl} (or {\it tripleable} \cite[\S 3.3]{bw}), i.e. whether they can be identified with the forgetful functors from the categories of algebras for the monads $GF$ and $G_c F_c$ attached to the adjunctions
\begin{equation*}
  F\dashv G\quad\text{and}\quad F_c\dashv G_c.
\end{equation*}
\Cref{cor:frgmnd} gives affirmative answers to those two questions.

We first recall some category-theoretic language and background. Recall (\cite[\S 3.3]{bw}):

\begin{definition}\label{def:spl}
  For a category $\cC$:
  \begin{itemize}
  \item A pair of morphisms $\partial_i:A\to B$ $i=0,1$ in $\cC$ is {\it reflexive} if the two arrows have a common {\it section}, i.e. a morphism $t:B\to A$ with
    \begin{equation*}
      \partial_1 t = \id_B = \partial_0 t.
    \end{equation*}
  \item A {\it reflexive coequalizer} is a coequalizer of a reflexive pair.
  \item A pair $\partial_i: A\to B$ is {\it contractible} or {\it split} if there is an arrow $t:B\to A$ such that
    \begin{equation*}
      \partial_0 t = \id_B,\quad \partial_1\circ t\circ\partial_1 = \partial_1 \circ t\circ \partial_0. 
    \end{equation*}
  \item A {\it contractible} or {\it split coequalizer} in $\cC$ consists of a diagram
    \begin{equation}\label{eq:spl}
      \begin{tikzpicture}[auto,baseline=(current  bounding  box.center)]
        \path[anchor=base] 
        (0,0) node (l) {$A$}
        +(4,0) node (m) {$B$}
        +(8,0) node (r) {$C$}
        ;
        \draw[->] (l) to[bend left=26] node[pos=.5,auto] {$\scriptstyle \partial_0$} (m);
        \draw[<-] (l) to[bend left=0] node[pos=.5,auto] {$\scriptstyle t$} (m);
        \draw[->] (l) to[bend left=-26] node[pos=.5,auto,swap] {$\scriptstyle \partial_1$} (m);
        \draw[->] (m) to[bend left=16] node[pos=.5,auto] {$\scriptstyle e$} (r);
        \draw[<-] (m) to[bend left=-16] node[pos=.5,auto,swap] {$\scriptstyle s$} (r);
      \end{tikzpicture}
    \end{equation}
    such that
    \begin{equation*}
      e\partial_0 = e\partial_1,\quad es=\id,\quad \partial_0 t=\id,\quad \partial_1t = se.
    \end{equation*}
    Just the presence of the arrows and the equations automatically implies that $e$ is a coequalizer for $\partial_i$, $i=0,1$ (\cite[\S 3.3, Proposition 2 (a)]{bw} or \cite[\S VI.6, Lemma]{mcl}), hence the name (`split {\it coequalizer}'). Furthermore, if a contractible pair of parallel arrows has a coequalizer, it is split \cite[\S 3.3, Proposition 2 (c)]{bw}; this justifies the terminology coincidence in the last two bullet items.
  \end{itemize}
  Given a functor $G:\cC\to \cD$, a pair $\partial_i:A\to B$, $i=0,1$ in $\cC$ is {\it $G$-split or $G$-contractible} if the pair $G\partial_i$ is contractible in $\cD$. The notion of being {\it $G$-reflexive} is defined analogously.
\end{definition}

Various criteria ensure that functors are monadic, each useful under appropriate circumstances. Recall two such sets of criteria (\cite[\S 3.3 Theorem 10 and subsequent discussion]{bw} as well as \cite[\S 3.5, paragraph preceding Proposition 1]{bw}):

\begin{definition}\label{def:tt}
  A functor $G:\cC\to \cD$
  \begin{itemize}
  \item satisfies the {\it Crude Tripleability Theorem (CTT)} if
  \begin{enumerate}[(1)]
  \item it has a left adjoint;
  \item it reflects isomorphisms;
  \item $\cC$ has coequalizers for those reflexive pairs $\partial_i$, $i=0,1$ for which $G\partial_i$ has a coequalizer, and $G$ preserves those coequalizers. 
  \end{enumerate}
\item satisfies the {\it Precise Tripleability Theorem (PTT)} if
  \begin{enumerate}[(1)]
  \item it has a left adjoint;
  \item it reflects isomorphisms;
  \item $\cC$ has coequalizers for reflexive $G$-split pairs, and $G$ preserves them.
  \end{enumerate}
  \end{itemize}
  We also use `CTT' and `PTT' as adjectives (e.g. `the functor $G$ is PTT').

  According to {\it Beck's theorem} (\cite[\S 3.3, Theorem 10]{bw}), being PTT is equivalent to being monadic, so these terms (along with `tripleable' will be interchangeable). On the other hand, as noted in \cite[\S 3.5, paragraph preceding Proposition 1]{bw}, being CTT is {\it sufficient} for monadicity.
\end{definition}

The following remark is a consequence of \cite[\S 3.5, Proposition 1 (b)]{bw}; we include a proof for completeness.

\begin{lemma}\label{le:cttptt}
  Consider functors $G'' \colon \mathscr{A} \to \mathscr{B}$ and $G' \colon \mathscr{B} \to \mathscr{C}$. If $G''$ is CTT and $G'$ is monadic then the composition $G'\circ G''$ is monadic.
\end{lemma}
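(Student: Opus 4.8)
The plan is to verify directly that the composite $G := G'\circ G''$ satisfies the hypotheses of the Precise Tripleability Theorem (Beck's theorem), namely: it has a left adjoint, it reflects isomorphisms, and it admits and preserves coequalizers of reflexive $G$-split pairs. Since $G'$ is monadic (hence PTT) and $G''$ is CTT, both have left adjoints, so $G$ has a left adjoint as a composite of left adjoints. Likewise $G'$ and $G''$ each reflect isomorphisms (the former by monadicity, the latter by CTT hypothesis (2)), so $G$ reflects isomorphisms as well.

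The substantive point is the third condition. Suppose $\partial_0,\partial_1\colon A\to B$ is a reflexive pair in $\mathscr A$ which is $G$-split, i.e. $G'G''\partial_i$ is a split (contractible) pair in $\mathscr C$. A split coequalizer diagram in $\mathscr C$ is preserved by every functor, so in particular the pair $G'G''\partial_i$ has a coequalizer in $\mathscr C$ and it is split there. Now I would invoke the CTT hypothesis on $G''$: because $G''$ is CTT, $\mathscr A$ has coequalizers for exactly those reflexive pairs whose image under $G''$ has a coequalizer in $\mathscr B$, and $G''$ preserves these. So the first task is to see that $G''\partial_i$ has a coequalizer in $\mathscr B$. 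For this I would note that $G''\partial_i$ is a reflexive pair in $\mathscr B$ (reflexivity is preserved by any functor, applying $G''$ to the common section), and it is $G'$-split since $G'(G''\partial_i) = G\partial_i$ is split by assumption. Since $G'$ is monadic, it is in particular PTT, so $\mathscr B$ has a coequalizer $e\colon G''B\to Q$ of the reflexive $G'$-split pair $G''\partial_i$, and $G'$ preserves it.

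Having established that $G''\partial_i$ has a coequalizer in $\mathscr B$, the CTT property of $G''$ gives us a coequalizer $\bar e\colon B\to \bar Q$ of $\partial_i$ in $\mathscr A$, with $G''$ preserving it; thus $G''\bar e$ is (canonically identified with) the coequalizer $e$ in $\mathscr B$. Applying $G'$ and using that $G'$ preserves the coequalizer of $G''\partial_i$, we get that $G(\bar e) = G'(G''\bar e)$ is a coequalizer of $G\partial_i = G'(G''\partial_i)$ in $\mathscr C$. Hence $\mathscr A$ has coequalizers of reflexive $G$-split pairs and $G = G'\circ G''$ preserves them. All three PTT conditions hold, so by Beck's theorem $G'\circ G''$ is monadic.

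The main obstacle — really the only place care is needed — is the bookkeeping in the middle step: one must check that the image pair $G''\partial_i$ genuinely falls within the class of pairs for which the CTT hypothesis on $G''$ guarantees a coequalizer, which requires observing both that $G''\partial_i$ is reflexive (immediate) and, more delicately, that it has a coequalizer in $\mathscr B$ — and the latter is precisely where the monadicity (PTT-ness) of $G'$ is used, via the fact that $G''\partial_i$ is $G'$-split because splitness of the pair over $\mathscr C$ factors as $G' \circ G''$. Once this chain of implications is laid out in the right order, the conclusion follows mechanically from the definitions, with no further computation.
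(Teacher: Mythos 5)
Your proof is correct and takes essentially the same route as the paper's: verify the three PTT (Beck) conditions for the composite, with the left adjoint and isomorphism-reflection composing formally, and the coequalizer condition handled by observing that $G''\partial_i$ is a reflexive $G'$-split pair, so its coequalizer exists in $\mathscr{B}$ by monadicity of $G'$, lifts to $\mathscr{A}$ by the CTT hypothesis on $G''$, and is preserved down to $\mathscr{C}$. (Your passing claim that the split pair $G'G''\partial_i$ already has a coequalizer in $\mathscr{C}$ is unused and harmless; otherwise your bookkeeping matches the paper's, and is if anything slightly more careful about the conditional phrasing of CTT condition (3).)
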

\begin{proof}
  Since right adjoints and conservative functors are closed under composition, it suffices to show $\mathscr{A}$ has coequalizers of reflexive $G'G''$\nobreakdash-split coequalizer pairs, and $G'G''$ preserves them.
  
  Suppose we have a reflexive $G'G''$\nobreakdash-split coequalizer pair in $\mathscr{A}$:
  \begin{equation}\label{eq:reflexive}
    \begin{tikzpicture}[auto,baseline=(current  bounding  box.center)]
      \path[anchor=base] 
      (0,0) node (l) {$A$}
      +(4,0) node (m) {$B$}
      ;
      \draw[->] (l) to[bend left=26] node[pos=.5,auto] {$\scriptstyle 
        \partial_0$} (m);
      \draw[<-] (l) to[bend left=0] node[pos=.5,auto] {$\scriptstyle t$} (m);
      
      \draw[->] (l) to[bend left=-26] node[pos=.5,auto,swap] {$\scriptstyle 
        \partial_1$} (m);
    \end{tikzpicture}
  \end{equation}
  such that $t$ is the common section for $\partial_0$ and $\partial_1$, for which there is a split coequalizer diagram in $\mathscr{C}$:
  \begin{equation}
    \begin{tikzpicture}[auto,baseline=(current  bounding  box.center)]
      \path[anchor=base] 
      (0,0) node (l) {$G'G''A$}
      +(4,0) node (m) {$G'G''B$}
      +(8,0) node (r) {$Z_{\mathscr{C}}$}
      ;
      \draw[->] (l) to[bend left=26] node[pos=.5,auto] {$\scriptstyle 
        G'G''\partial_0$} (m);
      \draw[<-] (l) to[bend left=0] node[pos=.5,auto] {$\scriptstyle b$} (m);
      
      \draw[->] (l) to[bend left=-26] node[pos=.5,auto,swap] 
      {$\scriptstyle 
        G'G''\partial_1$} (m);
      \draw[->>] (m) to[bend left=16] node[pos=.5,auto] {$\scriptstyle 
        e_{\mathscr{C}}$} (r);
      \draw[<-] (m) to[bend left=-16] node[pos=.5,auto,swap] {$\scriptstyle 
        s$} (r);
    \end{tikzpicture}.
  \end{equation}
  Since $\mathscr{A}$ has reflexive coequalizers, there is a coequalizer $e \colon B \twoheadrightarrow Z$ for \Cref{eq:reflexive}. It suffices to show $G'G''$ preserves the coequalizer.

  Since $G''$ preserves reflexive coequalizers, $G''e \colon G''B \twoheadrightarrow G''Z$ is the coequalizer for the image of \Cref{eq:reflexive} under $G''$ in $\mathscr{B}$. Since $\mathscr{B}$ has coequalizers of reflexive $G'$\nobreakdash-split coequalizer pairs, there is a coequalizer diagram in $\mathscr{B}$:
  \begin{equation}
    \begin{tikzpicture}[auto,baseline=(current  bounding  box.center)]
      \path[anchor=base] 
      (0,0) node (l) {$G''A$}
      +(4,0) node (m) {$G''B$}
      +(8,0) node (r) {$Z_{\mathscr{B}}$}
      ;
      \draw[->] (l) to[bend left=26] node[pos=.5,auto] {$\scriptstyle 
        G''\partial_0$} (m);
      \draw[<-] (l) to[bend left=0] node[pos=.5,auto] {$\scriptstyle G''t$} 
      (m);
      
      \draw[->] (l) to[bend left=-26] node[pos=.5,auto,swap] 
      {$\scriptstyle 
        G''\partial_1$} (m);
      \draw[->>] (m) to[bend left=0] node[pos=.5,auto] {$\scriptstyle 
        e_{\mathscr{B}}$} (r);
    \end{tikzpicture}
  \end{equation}
  and since $G'$ preserves them, $G'e_{\mathscr{B}} \cong e_{\mathscr{C}}$. Now by the uniqueness of coequalizers, we have $G''e \cong e_{\mathscr{B}}$ and so $G'G''e \cong e_{\mathscr{C}}$.
\end{proof}

\begin{theorem}\label{th:cbanalg}
  The forgetful functors
  \begin{equation*}
    G:\cC^*_1\to \cat{BanAlg}^*_1\quad\text{and}\quad G_c:\cC^*_{c,1}\to \cat{BanAlg}^*_{c,1}
  \end{equation*}
  are both CTT and hence monadic.
\end{theorem}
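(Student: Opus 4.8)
The plan is to verify, for $G\colon\cC^*_1\to\cat{BanAlg}^*_1$, the three conditions defining CTT (\Cref{def:tt}); the case of $G_c$ is word-for-word identical once one observes that every construction below preserves commutativity, so I will only spell out the argument for $G$.

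\emph{Left adjoint.} For a unital Banach $*$-algebra $A$, I would let $p_A$ be the pointwise supremum of all $C^*$-seminorms on $A$ dominated by $\|\cdot\|_A$. Since each such seminorm is $\le\|\cdot\|_A$ these form a set, so $p_A$ is well defined, and a brief verification (the only non-formal point being the identity $p_A(a^*a)=p_A(a)^2$, which follows from monotonicity of $t\mapsto t^2$ on $[0,\infty)$) shows that $p_A$ is again a $C^*$-seminorm with $p_A\le\|\cdot\|_A$. Define $F(A)$ as the completion of $A/p_A^{-1}(0)$ in the norm induced by $p_A$, a unital $C^*$-algebra, together with its canonical (contractive, because $p_A\le\|\cdot\|_A$) structure morphism $\eta_A\colon A\to F(A)$. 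Given any morphism $\varphi\colon A\to C$ of $\cat{BanAlg}^*_1$ with $C$ a $C^*$-algebra, the function $a\mapsto\|\varphi(a)\|_C$ is a $C^*$-seminorm dominated by $\|\cdot\|_A$, hence by $p_A$; so $\varphi$ annihilates $\ker\eta_A$, is $p_A$-contractive, and extends uniquely along $\eta_A$ to a morphism $F(A)\to C$ of $\cC^*_1$. This is precisely the statement that $\eta$ is the unit of an adjunction $F\dashv G$.

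\emph{Conservativity and coequalizers.} If $f$ is a morphism of $\cC^*_1$ with $Gf$ invertible in $\cat{BanAlg}^*_1$, then $f$ is in particular a bijective unital $*$-homomorphism of $C^*$-algebras, hence invertible in $\cC^*_1$; so $G$ reflects isomorphisms. For the third condition I would prove the stronger statement that $G$ preserves \emph{all} coequalizers. Given a parallel pair $\partial_0,\partial_1\colon A\rightrightarrows B$ in $\cC^*_1$, let $I\trianglelefteq B$ be the closed two-sided ideal generated by the differences $\partial_0(a)-\partial_1(a)$; it is automatically $*$-closed, and $B\twoheadrightarrow B/I$ is the coequalizer in $\cC^*_1$, since a unital $*$-homomorphism out of $B$ coequalizes the pair exactly when it kills $I$. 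That same $I$ is the closure of the algebraic two-sided $*$-ideal generated by the same differences, so $B\twoheadrightarrow B/I$ with its Banach quotient norm is likewise the coequalizer of $G\partial_0,G\partial_1$ in $\cat{BanAlg}^*_1$. Finally, the classical fact that a quotient of a $C^*$-algebra by a closed two-sided ideal, equipped with the quotient norm, is again a $C^*$-algebra identifies $G(B/I)$ with that coequalizer, so $G$ preserves it.

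With all three conditions in hand, $G$ (and, identically, $G_c$) is CTT and therefore monadic. I expect the single genuinely non-formal ingredient to be the existence of the left adjoint $F$ — i.e.\ that the unital $C^*$-algebras form a reflective subcategory of the unital Banach $*$-algebras — for which the ``largest $C^*$-seminorm below $\|\cdot\|_A$'' recipe is the standard device; conservativity and the coequalizer clause are then essentially bookkeeping together with the standard $C^*$-quotient theorem.
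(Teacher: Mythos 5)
Your proof is correct and follows essentially the same route as the paper: verify the CTT conditions, with conservativity coming from the automatic invertibility of bijective unital $*$-homomorphisms and the key step being that coequalizers in $\cC^*_1$ and $\cat{BanAlg}^*_1$ are computed by the same quotient $B\twoheadrightarrow B/I$, since the quotient of a $C^*$-algebra by a closed $*$-ideal is again a $C^*$-algebra in the quotient norm (the paper's \Cref{le:crcoeq}). The only difference is cosmetic: you construct the left adjoint explicitly via the largest dominated $C^*$-seminorm, and you get existence of coequalizers directly from the quotient description, whereas the paper cites \cite[\S 5]{ros-mnd} for the adjoint and local $\aleph_1$-presentability for cocompleteness.
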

\begin{proof}
  We focus on $\cC^*_1$, as the other argument is entirely parallel.

  That $G$ is a right adjoint we can see as in, say, \cite[\S 5]{ros-mnd} (which discusses the forgetful functor to $\cat{Ban}$ instead). Isomorphism-reflection, on the other hand, follows from the fact that bijective morphisms of $C^*$-algebras are automatically isometries (and hence invertible) \cite[discussion preceding Theorem 1.3.2]{arv}.
  
  The existence of coequalizers (reflexive or not) is not an issue: not only is $\cC^*_1$ cocomplete (i.e. has arbitrary colimits), but as noted in \cite[Remark 6.10]{dr}, it is {\it locally $\aleph_1$-presentable} in the sense of \cite[Definition 1.17]{ar}, because it can be realized \cite[Theorem 2.4]{pr1} as a variety of algebras equipped with $\aleph_0$-ary operations \cite[Theorem 3.28]{ar}.
  
  It thus remains to argue that $G$ preserves reflexive coequalizers. Here too, much more is true: it {\it creates} {\it arbitrary} coequalizers (\Cref{le:crcoeq}).
\end{proof}

\begin{lemma}\label{le:crcoeq}
  The forgetful functor
  \begin{equation*}
    G:\cC^*_1\to \cat{BanAlg}^*_1
  \end{equation*}
  from unital $C^*$-algebras to unital Banach $*$-algebras creates arbitrary coequalizers in the sense of \cite[\S 1.7, discussion following Proposition 3]{bw}.

  The analogous statement holds for the respective categories of commutative $C^*$- and Banach $*$-algebras.
\end{lemma}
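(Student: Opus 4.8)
The plan is to compute the coequalizer of a parallel pair in $\cat{BanAlg}^*_1$ explicitly as a quotient by a closed $*$-ideal, to observe that when the pair consists of $C^*$-algebras this quotient is again a $C^*$-algebra, and then to let the full faithfulness of $G$ transport the universal property. First I would record that $G$ is (isomorphic to) the inclusion of a full, isomorphism-closed subcategory: a unital $*$-homomorphism between $C^*$-algebras is automatically contractive \cite[discussion preceding Theorem 1.3.2]{arv}, so $\cC^*_1$ is identified with the full subcategory of $\cat{BanAlg}^*_1$ consisting of those Banach $*$-algebras whose norm satisfies the $C^*$-identity. Next, for a parallel pair $\partial_0,\partial_1\colon A\rightrightarrows B$ in $\cat{BanAlg}^*_1$ I would write down its coequalizer: let $J\subseteq B$ be the norm-closure of the $*$-ideal generated by $\{\partial_0(a)-\partial_1(a):a\in A\}$ — equivalently, the smallest closed $*$-ideal containing that set — and equip $B/J$ with the quotient norm. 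Routine bookkeeping shows $B/J$ is again a unital Banach $*$-algebra ($B/J$ is complete since $J$ is closed, the quotient norm is submultiplicative, and the induced involution is isometric because $J$ is self-adjoint), and that the quotient map $q\colon B\to B/J$ is universal among morphisms of $\cat{BanAlg}^*_1$ annihilating the differences $\partial_0(a)-\partial_1(a)$: any such morphism has kernel a closed $*$-ideal containing $J$, and the induced map out of $B/J$ is automatically contractive. (If $J=B$ this degenerates to the zero algebra, which causes no trouble.)

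Now suppose $A,B\in\cC^*_1$. Then $J$ is in particular a closed two-sided ideal of the $C^*$-algebra $B$, and the crucial classical input is that the quotient of a $C^*$-algebra by a closed two-sided ideal, equipped with the quotient norm, is again a $C^*$-algebra \cite{blk,tak1}. Hence the coequalizer $q\colon B\to B/J$ constructed above already lives in $\cC^*_1$. Because $G$ is a full embedding and every morphism of $\cC^*_1$ is a contraction, the universal property of $q$ in $\cat{BanAlg}^*_1$ restricts verbatim to $\cC^*_1$: a cocone $f\colon B\to C$ with $C\in\cC^*_1$ factors uniquely as $\bar f\circ q$, and the mediating contraction $\bar f\colon B/J\to C$, being a unital $*$-homomorphism between $C^*$-algebras, is a morphism of $\cC^*_1$. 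Since $G$ is an inclusion, the lift of the coequalizer from $\cat{BanAlg}^*_1$ is manifestly unique, so $G$ creates the coequalizer of $(\partial_0,\partial_1)$. The commutative variant requires no new idea: $B/J$ is commutative whenever $B$ is, so the identical argument applies to $\cC^*_{c,1}\subseteq\cat{BanAlg}^*_{c,1}$.

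The only genuinely non-formal ingredient is the theorem that the quotient of a $C^*$-algebra by a closed ideal is a $C^*$-algebra (equivalently, that the quotient norm is \emph{the} $C^*$-norm on $B/J$); everything else — the description of coequalizers in $\cat{BanAlg}^*_1$ as quotient Banach $*$-algebras, and the transfer of the universal property across the full embedding $G$ — is routine. I therefore expect the write-up to be short, with that quotient theorem as its single substantive citation. (One could instead try to argue via the reflectivity of $\cC^*_1$ in $\cat{BanAlg}^*_1$ given by the universal enveloping $C^*$-algebra, but this is no shortcut: one would still need the same quotient theorem to conclude that the reflection acts trivially on $B/J$, which is exactly the content making ``creates'' rather than merely ``preserves up to reflection'' true.)
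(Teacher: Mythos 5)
Your proposal is correct and follows essentially the same route as the paper: compute the $\cat{BanAlg}^*_1$-coequalizer as the quotient of $B$ by the closed $*$-ideal generated by the differences $\partial_0(a)-\partial_1(a)$, and invoke the classical fact that the quotient norm on a $C^*$-algebra modulo a closed $*$-ideal is already a $C^*$-norm, so the two coequalizer constructions coincide. Your extra bookkeeping about full faithfulness and uniqueness of the lift is exactly the routine part the paper leaves implicit.
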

\begin{proof}
  We once more focus on the non-commutative version to fix ideas, but the argument goes through virtually verbatim in general.

  Consider a parallel pair $f,g:A\to B$ of unital $C^*$-morphisms. The coequalizer in $\cat{BanAlg}^*_1$ is obtained by annihilating the closed ideal
  \begin{equation*}
    I:=\overline{B\cdot \{f(a)-g(a)\ |\ a\in A\}\cdot B}
  \end{equation*}
  and then equipping the quotient $B/I$ with the largest Banach-space norm making $\pi:B\to B/I$ contractive: for $x\in B/I$,
  \begin{equation*}
    \|x\| := \inf \{\|b\|\ |\ b\in B,\ \pi(b)=x\}.
  \end{equation*}
  But because $I\trianglelefteq B$ is a closed $*$-ideal, this is already a $C^*$-norm on $B/I$ \cite[\S 1.3, Corollary 2]{arv}. In conclusion, for two parallel unital $C^*$-morphisms the coequalizer constructions in $\cC^*$ and $\cat{BanAlg}^*_1$ coincide.
\end{proof}

\Cref{th:cbanalg} in turn implies 

\begin{corollary}\label{cor:frgmnd}
  The following forgetful functors are all monadic:
  \begin{enumerate}[(a)]
  \item From $\cC^*_1$ to unital Banach $*$-algebras, unital Banach algebras, or Banach spaces.
  \item From $\cC^*_{c,1}$ to unital commutative Banach $*$-algebras, unital commutative Banach algebras, or Banach spaces.
  \end{enumerate}
\end{corollary}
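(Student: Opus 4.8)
The plan is to factor each forgetful functor appearing in \Cref{cor:frgmnd} through the chain
\begin{equation*}
  \cC^*_1 \xrightarrow{\ G\ } \cat{BanAlg}^*_1 \xrightarrow{\ U_1\ } \cat{BanAlg}_1 \xrightarrow{\ U_2\ } \cat{Ban}
\end{equation*}
and its commutative analogue, to note that $G$ (resp.\ $G_c$) is CTT by \Cref{th:cbanalg}, to prove that $U_1$ and $U_2$ are CTT as well, and then to splice everything together with \Cref{le:cttptt}. Since CTT functors are monadic, this yields monadicity of all the composites.

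To see that $U_1$ (forgetting the involution) is CTT: it has a left adjoint --- either by the adjoint functor theorem for locally presentable categories (all four categories in the chain are locally $\aleph_1$-presentable and $U_1$ preserves limits and filtered colimits), or explicitly via the free $*$-algebra $A\mapsto A\sqcup\ol{A}^{\mathrm{op}}$ --- and it is conservative, since the inverse of a $*$-homomorphism that is an isomorphism in $\cat{BanAlg}_1$ is again a $*$-homomorphism. In fact $U_1$ \emph{creates} arbitrary coequalizers, exactly as in \Cref{le:crcoeq}: the coequalizer of $f,g\colon A\to B$ in $\cat{BanAlg}_1$ is $B/I$ with $I$ the closed two-sided ideal generated by $\{f(a)-g(a)\mid a\in A\}$; because $f,g$ are $*$-homomorphisms this generating set is $*$-stable, so $I$ is a $*$-ideal and $B/I$ carries a unique isometric involution making the quotient map a $*$-morphism, and the result is the coequalizer in $\cat{BanAlg}^*_1$.

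The work is in $U_2$ (forgetting multiplication and unit). It again has a left adjoint (a suitably completed tensor algebra $V\mapsto\bigoplus_{n\ge 0}\widehat{V^{\otimes n}}$, or the adjoint functor theorem) and is conservative. It does not create coequalizers, but it does preserve \emph{reflexive} ones, which is all CTT demands since $\cat{BanAlg}_1$ and $\cat{Ban}$ are cocomplete: given a reflexive pair $f,g\colon A\to B$ with common section $t\colon B\to A$, the set $D=\{f(a)-g(a)\mid a\in A\}$ --- already a linear subspace, as $f,g$ are linear --- is in fact a two-sided ideal of $B$, because for $b\in B$ one has $b\cdot f(a)=f(t(b))f(a)=f(t(b)a)$ and likewise $b\cdot g(a)=g(t(b)a)$, so $b\big(f(a)-g(a)\big)=f(t(b)a)-g(t(b)a)\in D$, and symmetrically for right multiplication. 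Hence the closed ideal generated by $D$ is just $\ol{D}$, so the $\cat{BanAlg}_1$-coequalizer $B/\ol{D}$ (with its quotient norm) is \emph{the same Banach space} as the $\cat{Ban}$-coequalizer of $f,g$; thus $U_2$ preserves reflexive coequalizers. This differences-form-an-ideal observation is the one genuinely new ingredient; everything else is routine.

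Finally I would assemble the result. $G$ is monadic by \Cref{th:cbanalg}, covering the target $\cat{BanAlg}^*_1$. Since $U_1$ is CTT it is monadic, so \Cref{le:cttptt} (inner functor $G$ CTT, outer functor $U_1$ monadic) gives that $\cC^*_1\to\cat{BanAlg}_1$ is monadic. Applying \Cref{le:cttptt} once more with inner $U_1$ and outer $U_2$ shows $\cat{BanAlg}^*_1\to\cat{Ban}$ is monadic, and a last application with inner $G$ and outer $U_2\circ U_1$ shows $\cC^*_1\to\cat{Ban}$ is monadic. The commutative statements are obtained by running the identical argument through $\cat{BanAlg}^*_{c,1}$, $\cat{BanAlg}_{c,1}$ and $G_c$; commutativity is preserved by all of the quotient constructions above, so the CTT verifications carry over verbatim. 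The main obstacle is the coequalizer analysis for $U_2$; constructing the left adjoints and checking conservativity and the $*$-stability point for $U_1$ are all standard.
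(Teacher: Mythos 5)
Your argument is correct, and its skeleton is the same as the paper's: factor through $\cat{BanAlg}^*_1$, use \Cref{th:cbanalg} for the first leg, and splice with \Cref{le:cttptt}. Where you differ is in how the remaining forgetful functors $\cat{BanAlg}^*_1\to\cat{BanAlg}_1$, $\cat{BanAlg}^*_1\to\cat{Ban}$ (and their commutative analogues) are handled: the paper simply quotes \cite[Theorem 5.1]{ros-mnd} (or ``very slight alterations to its proof'') for their monadicity, whereas you verify CTT for them directly --- conservativity and left adjoints being routine, with the one genuinely substantive point being your observation that for a \emph{reflexive} pair $f,g\colon A\to B$ of unital Banach-algebra morphisms with common section $t$, the image of $f-g$ is already a two-sided ideal (via $b(f(a)-g(a))=f(t(b)a)-g(t(b)a)$ and its right-handed twin), so the $\cat{Ban}$- and $\cat{BanAlg}_1$-coequalizers literally coincide; your $*$-stability argument for the involution-forgetting step is the exact analogue of \Cref{le:crcoeq}. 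This buys self-containedness at the cost of a little extra work: you must supply the free constructions (completed tensor/symmetric algebras, $A\sqcup\ol{A}^{\mathrm{op}}$) or invoke local presentability plus accessibility for the adjoint functor theorem, and you should fix conventions (isometric involutions, the zero algebra and $\|1\|=1$ when the generated ideal is all of $B$) so the quotient constructions stay inside the stated categories --- all minor, and your multiple invocations of \Cref{le:cttptt} to cover each target category are applied correctly.
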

\begin{proof}    
  In each case the functor in question decomposes as one of the CTT (\Cref{th:cbanalg}) forgetful functors
  \begin{equation*}
    \cC^*_1\to \cat{BanAlg}^*_1\quad\text{or}\quad\cC^*_{c,1}\to \cat{BanAlg}^*_{c,1},
  \end{equation*}
  followed by forgetful functors
  \begin{equation*}
    \cat{BanAlg}^*_1\to \cat{BanAlg}_1,\quad \cat{BanAlg}^*_1\to \cat{Ban}
  \end{equation*}
  or 
  \begin{equation*}
    \cat{BanAlg}^*_{c,1}\to \cat{BanAlg}_{c,1},\quad \cat{BanAlg}^*_{c,1}\to \cat{Ban}.
  \end{equation*}
  That these last four functors are monadic follows either directly from \cite[Theorem 5.1]{ros-mnd} or from very slight alterations to its proof, so the conclusion is a consequence of \Cref{le:cttptt}.
\end{proof}


\section{$\aleph_0$-generation}

As noted in \cite[Remark 6.10]{dr} (and recalled above in the course of the proof of \Cref{cor:frgmnd}) the categories $\cC^*_1$ and $\cC^*_{c,1}$ are both locally $\aleph_1$-presentable.

By contrast, the same \cite[Remark 6.10]{dr} speculates that $\cC^*_1$ is unlikely to be $\cM$-locally $\aleph_0$-generated, where $\cM$ is the class of unital $C^*$-embeddings (which are, in particular, automatically isometric).  That this is indeed the case will follow from the identification of those commutative $C^*$-algebras that are $\aleph_0$-generated with respect to $\cM$.  As much of \cite[Remark 6.10]{dr} makes sense for $\cC^*_{1}$ and $\cC^*_{c,1}$ either as plain categories or as categories enriched over metric spaces, we treat discuss the settings separately.

\subsection{Ordinary categories of $C^*$-algebras}\label{subse:plain}

\begin{proposition}\label{pr:fdim}
  A commutative $C^*$-algebra is $\aleph_0$-generated w.r.t. the class $\cM$ of $C^*$ embeddings
  \begin{enumerate}[(a)]
  \item\label{item:1} in $\cC^*_{c,1}$ if and only if it is finite-dimensional;
  \item\label{item:2} and in $\cC^*_1$ if and only if it has dimension $\le 1$.
  \end{enumerate}
\end{proposition}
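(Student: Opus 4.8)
The plan is to pass to the dual side via Gelfand duality, which identifies $\cC^*_{c,1}$ with the opposite of the category of compact Hausdorff spaces. Injective unital $*$-homomorphisms of commutative $C^*$-algebras are automatically isometric and correspond to surjections, and a directed colimit along $\cM$ corresponds to a cofiltered limit $Y=\varprojlim_i X_i$ of compact Hausdorff spaces with surjective bonding maps (the colimit existing since $\cC^*_{c,1}$ is cocomplete). So being $\aleph_0$-generated w.r.t.\ $\cM$ becomes a statement about how $\mathrm{hom}(-,X)$ (in $\cat{CHaus}$) transforms such limits. I would first observe that the comparison map $\varinjlim_i\mathrm{hom}(c,c_i)\to\mathrm{hom}(c,\varinjlim_i c_i)$ is automatically injective whenever the $c_i$-system consists of monomorphisms (two morphisms into a fixed $c_i$ that agree after $c_i\hookrightarrow\varinjlim$ already agree; for different stages, pass to a common later one), so in every case below the content is the factorization assertion: every morphism out of $c$ into the colimit factors through a stage.

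Part (a), ``if'': take $A=\bC^n=C(\mathbf n)$ with $\mathbf n$ the $n$-point space, so a morphism $Y\to\mathbf n$ is a partition of $Y$ into $\le n$ clopen pieces. The crucial lemma is that in a cofiltered limit $Y=\varprojlim_i X_i$ of compact Hausdorff spaces with surjective bonds, every clopen subset of $Y$ is the preimage of a clopen subset of some $X_i$: cover $Y$ by finitely many basic opens $\pi_{i_k}^{-1}(O_k)$, each lying inside the given clopen or its complement, go to a common index, and use surjectivity of $\pi_i$ and of the bonds to check that the induced decomposition of that $X_i$ is into clopens. Applying this to all $n$ pieces at once gives the factorization, and surjectivity of the limit projections $\pi_i$ (standard for surjective cofiltered systems of compact Hausdorff spaces) handles the bookkeeping for injectivity; hence $\bC^n$ is $\aleph_0$-generated w.r.t.\ $\cM$. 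Part (a), ``only if'': if $X$ is infinite, fix a countably infinite discrete $D\subseteq X$; its inclusion extends to a surjection $\beta D\twoheadrightarrow\overline D\subseteq X$, and $\beta\bN\cong\beta D$ is the cofiltered limit of its finite discrete quotients along surjections, the corresponding directed colimit of finite-dimensional commutative $C^*$-algebras being $\ell^\infty=C(\beta\bN)$. The composite $\psi\colon\beta\bN\to X$ has infinite image $\overline D$, so it factors through no finite quotient; dually $\psi^*\colon C(X)\to\ell^\infty$ factors through no stage, so $C(X)$ is not $\aleph_0$-generated, giving the required necessity of finite-dimensionality.

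Part (b), ``if'': $\mathrm{hom}_{\cC^*_1}(\bC,-)$ is the constant functor at a one-point set, and (if the zero algebra is admitted) $\mathrm{hom}_{\cC^*_1}(0,-)$ is constant along any $\cM$-chain, since such a chain is either all zero or all nonzero; both functors therefore preserve all directed colimits. Part (b), ``only if'': a directed colimit of commutative unital $C^*$-algebras taken in $\cC^*_1$ is commutative and so agrees with the one taken in $\cC^*_{c,1}$, and homomorphisms out of a commutative algebra into commutative algebras are the same in both categories; thus for $X$ infinite the counterexample from part (a) already applies in $\cC^*_1$. It remains to treat $X$ finite with $|X|\ge 2$, i.e.\ $A=\bC^n$ with $n\ge 2$, and this is where a genuinely non-commutative test system is needed. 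I would take $K=\{0\}\cup\{1/m:m\ge 1\}$, $c_\infty=C(K,M_2(\bC))$, and $p\in c_\infty$ the projection with $p(1/m)$ equal to the rank-one projection onto the line at angle $\theta_m$ in $\bC^2$ and $p(0)$ the line at angle $0$, where $\theta_m\to 0$ but $\theta_m\neq 0$ for every $m$; put $c_i=\{f\in c_\infty : f \text{ constant on } \{0\}\cup\{1/m : m>i\}\}\cong M_2(\bC)^{\,i+1}$, a chain of unital $C^*$-subalgebras with dense union, so that $c_\infty=\varinjlim_i c_i$ in $\cC^*_1$ along $\cM$. Since $p(1/m)\neq p(0)$ for all $m$, we get $p\notin c_i$ for every $i$, hence the unital $*$-homomorphism $\bC^n\to c_\infty$ with $e_1\mapsto p$, $e_2\mapsto 1-p$ and $e_3,\dots,e_n\mapsto 0$ factors through no $c_i$; so $\bC^n$ is not $\aleph_0$-generated w.r.t.\ $\cM$ in $\cC^*_1$. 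I expect this last construction to be the only non-routine step: inside a directed colimit of non-commutative $C^*$-algebras along isometric unital embeddings one must exhibit a single projection living at no finite stage, which relies on distinct projections being close in the non-commutative world --- unlike in the commutative/connected or profinite settings, where a dense $*$-subalgebra automatically contains every projection. The remaining ingredients (the clopen-sets lemma, surjectivity of limit projections, the $\beta\bN$ computation) are standard, so the main work is assembling them and checking the comparison-map bookkeeping.
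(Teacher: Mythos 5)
Your argument is correct, and its overall skeleton matches the paper's: Gelfand-dualize, prove the positive statement for finite-dimensional algebras in $\cC^*_{c,1}$ by factoring maps to finite discrete spaces through a stage of the cofiltered limit, kill infinite-dimensional algebras via a Stone--\v{C}ech-type profinite system, and kill $\bC^n$ ($n\ge 2$) in $\cC^*_1$ by exhibiting a projection in a directed colimit of noncommutative algebras lying in no stage. The genuine divergence is in that last, non-routine step: the paper first reduces to $\bC^2$ (quotients inherit $\aleph_0$-generation) and then simply cites Lazar's example of a projection in the Fermion algebra $\varinjlim_n M_{2^n}$ belonging to no $M_{2^n}$, whereas you construct an explicit witness, namely $C(K,M_2(\bC))$ for $K=\{0\}\cup\{1/m\}$ as the colimit of the subalgebras of eventually constant functions, with a slowly rotating projection $p$ in no stage; this is self-contained and elementary (the density and colimit identifications you assert are easily checked), and your $e_1\mapsto p$, $e_2\mapsto 1-p$, $e_j\mapsto 0$ map handles all $n\ge 2$ at once without the quotient-heredity remark, at the cost of doing by hand what the paper outsources to a reference. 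The other differences are cosmetic: for the infinite-dimensional case the paper uses the surjection $\beta X_d\to X$ and the resulting injection $C(X)\to\varinjlim_i C(X_i)$, concluding finite-dimensionality from any factorization, while you use $\beta D$ for a countably infinite discrete $D\subseteq X$ and rule out factorizations because the image $\overline D$ is infinite --- equally valid; and your clopen-preimage lemma for cofiltered limits with surjective bonding maps (where surjectivity of the limit projections is indeed the needed standard fact) is the same compactness argument as the paper's analysis of the sets $\{(x,y): \pi(x)\ne\pi(y)\}$. Your opening remark that the comparison map is automatically injective along colimits of embeddings correctly disposes of the half of the preservation condition the paper leaves implicit.
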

\begin{proof}
  Some claims can be treated simultaneously for both \Cref{item:1} and 
  \Cref{item:2}.

  {\bf ($\Leftarrow$): algebras of dimension $\le 1$.} That is, either the zero algebra of the scalars. That these are indeed $\cM$-generated (in either category) is immediate.

  {\bf ($\Rightarrow$): ruling out infinite-dimensional algebras in both \Cref{item:1} and \Cref{item:2}.} Consider an $\cM$-$\aleph_0$-generated commutative $C^*$-algebra $A\in \cC^*_{c,1}$. We have $A\cong C(X)$ for some compact $X$ by Gelfand-Naimark \cite[Theorem III.2.2.4]{blk}. Denoting by $X_d$ the underlying set $X$ with the discrete topology, we now have a surjection
  \begin{equation*}
    \beta X_d\to X
  \end{equation*}
  from the {\it Stone-\v{C}ech compactification} \cite[\S 38, Definition preceding Exercises]{mnk} of $X_d$. Because $X_d$ is discrete its Stone-\v{C}ech compactification is totally disconnected \cite[\S 38, Exercise 7 (c)]{mnk}, and hence {\it profinite} \spr{08ZY}: it is a filtered limit of finite (discrete) spaces, say
  \begin{equation*}
    \beta X_d = \varprojlim_i X_i,\quad |X_i|<\infty.
  \end{equation*}
  Dualizing back to commutative $C^*$-algebras, this gives us a one-to-one morphism
  \begin{equation*}
    C(X)\to \varinjlim_i C(X_i)
  \end{equation*}
  into a directed colimit of embeddings. The $\aleph_0$-generation hypothesis then ensures that said embedding factors through some $C(X)\to C(X_i)$, and since $C(X_i)$ is finite-dimensional $A\cong C(X)$ must be too. 

  {\bf \Cref{item:2} ($\Rightarrow$): ruling out algebras of dimension $\ge 2$ in $\cC^*_1$.} First, note that the isometry-$\aleph_0$-generation property is inherited by quotients, so it suffices to focus on the two-dimensional $C^*$-algebra $A\cong \bC^2$.

  A unital morphism $A\to B$ simply picks out a projection in $B$. All the statement claims, then, is that there are directed colimits
  \begin{equation*}
    B=\varinjlim_i B_i
  \end{equation*}
  of unital $C^*$-algebras that contain projections which belong to none of the individual $B_i$. This is well known; \cite[Example 1.3]{laz-dir}, for instance, explains how to construct such a projection (denoted on \cite[p.711]{laz-dir} by $x$) in the ``{\it Fermion algebra}'' of \cite[discussion preceding Proposition 6.4.3]{ped-aut}: the directed limit
  \begin{equation*}
    B=\varinjlim_n M_{2^n}
  \end{equation*}
  of the inclusions
  \begin{equation*}
    M_{2^n}\ni a\mapsto a\otimes 1\in M_{2^n}\otimes M_2\cong M_{2^{n+1}}. 
  \end{equation*}
  

  {\bf \Cref{item:1} ($\Leftarrow$): arbitrary finite-dimensional algebras in $\cC^*_{c,1}$.} The finite-dimensional commutative $C^*$-algebras are those of continuous functions on finite discrete spaces, so upon dualizing by Gelfand-Naimark the claim is as follows: for every filtered limit
  \begin{equation*}
    X=\varprojlim_i X_i
  \end{equation*}
  of (surjections of) compact Hausdorff spaces, a continuous map $\pi:X\to F$ to a finite discrete set must factor through one of the surjections $\pi_i:X\to X_i$.

  For a map $f$ defined on $X$ (such as $\pi$ or the $\pi_i$) set
  \begin{equation*}
    \cat{diff}(f):=\{(x,y)\in X^2\ |\ f(x)\ne f(y) \}.
  \end{equation*}
  These sets are always open for continuous $f$ into Hausdorff spaces.

  The realization of $X$ as a limit ensures that every $(x,y)\in \cat{diff}(\pi)$ has some open neighborhood of the form
  \begin{equation*}
    U_{x,y,i}\subseteq \cat{diff}(\pi_i)
  \end{equation*}
  for some $i$. Because $\pi:X\to F$ has finite discrete codomain $\cat{diff}(\pi)$ is also closed in $X^2$, and hence compact covered by only finitely many $U_{x,y,i}$. The map $\pi$ will then factor through $\pi_j$ for $j$ dominating all $i$ appearing among these finitely many $U_{x,y,i}$, finishing the proof.
\end{proof}

As announced above, \Cref{pr:fdim} implies

\begin{corollary}\label{cor:ngen}
  The categories $\cC^*_1$ and $\cC^*_{c,1}$ are not isometry-locally $\aleph_0$-generated.
\end{corollary}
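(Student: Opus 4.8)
The plan is to assume for contradiction that one of $\cC^*_{c,1}$, $\cC^*_1$ is isometry-locally $\aleph_0$-generated, and to derive a contradiction from \Cref{pr:fdim} by exhibiting a single object that cannot be written as an $\aleph_0$-directed colimit of $\cM$-morphisms between $\aleph_0$-generated (w.r.t.\ $\cM$) objects, as \Cref{def:lg} would demand. Throughout I will use the routine fact that for a directed system of $C^*$-algebras with injective connecting $*$-homomorphisms the canonical maps $A_i\to\varinjlim_j A_j$ into the inductive limit are again injective; since $\cM$ consists of unital isometric embeddings, in any presentation $A\cong\varinjlim_i A_i$ of the relevant sort each $A_i$ is thereby realized as a unital $C^*$-subalgebra of $A$.

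\textbf{The commutative case.} Suppose $S\subseteq\cC^*_{c,1}$ is a set of objects that are $\aleph_0$-generated w.r.t.\ $\cM$ and witnesses that $\cC^*_{c,1}$ is $\cM$-locally $\aleph_0$-generated; by part~(a) of \Cref{pr:fdim} every member of $S$ is finite-dimensional. I apply this to the test object $A:=C([0,1])$. Writing $C([0,1])\cong\varinjlim_i A_i$ with all $A_i\in S$, each $A_i$ is a finite-dimensional unital $C^*$-subalgebra of $C([0,1])$, hence of the form $\bC^{n_i}$, so its image is the span of $n_i$ mutually orthogonal projections of $C([0,1])$ summing to $1$. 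Since $[0,1]$ is connected, the only projections of $C([0,1])$ are $0$ and $1$; hence $n_i=1$ and $A_i=\bC\cdot 1$ for every $i$. But then $\varinjlim_i A_i\cong\bC\not\cong C([0,1])$, the desired contradiction.

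\textbf{The general case.} Now suppose $S\subseteq\cC^*_1$ witnesses that $\cC^*_1$ is $\cM$-locally $\aleph_0$-generated, and take the test object $A:=\bC^2$. In a presentation $\bC^2\cong\varinjlim_i A_i$ with $A_i\in S$, each $A_i$ embeds unitally into the commutative algebra $\bC^2$ and is therefore a commutative $C^*$-algebra; being a member of $S$ it is $\aleph_0$-generated w.r.t.\ $\cM$ in $\cC^*_1$, so part~(b) of \Cref{pr:fdim} forces $\dim A_i\le 1$, and $A_i\cong\bC$ since the structure map $A_i\to\bC^2$ is unital (so $A_i\neq 0$). A directed colimit of copies of $\bC$ along unital embeddings is again $\bC$, whence $\varinjlim_i A_i\cong\bC\not\cong\bC^2$, a contradiction. (One could equally well reuse the test object $C([0,1])$ here.)

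The arguments are short once \Cref{pr:fdim} is available, so I anticipate no serious obstacle. The one point to be careful about is that \Cref{pr:fdim} classifies only the \emph{commutative} $\aleph_0$-generated objects --- which is precisely what is on hand and, crucially, precisely enough, because any $C^*$-algebra that embeds into a commutative one is itself commutative. The remaining inputs are entirely elementary: the connectedness of $[0,1]$ (equivalently, the absence of nontrivial projections in $C([0,1])$) and the standard description of $C^*$-algebraic inductive limits recalled above.
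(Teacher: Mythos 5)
Your proof is correct and takes essentially the same route as the paper: both deduce from \Cref{pr:fdim} that a directed colimit of isometric embeddings of $\aleph_0$-generated objects must be a commutative AF algebra (in your version, one spanned by projections), and then exhibit commutative algebras not of that form. Your concrete test objects $C([0,1])$ and $\bC^2$ merely replace the paper's appeal to the AF/profinite characterization, and your observation that any subalgebra of a commutative algebra is commutative correctly handles the possibility of noncommutative generators in $\cC^*_1$.
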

\begin{proof}
  Indeed, according to \Cref{pr:fdim} the only commutative unital $C^*$-algebras that can be recovered as directed colimits of embeddings of isometry-$\aleph_0$-generated subalgebras are the {\it AF algebras} of \cite[Definition 7.1.1]{blk-k}.

  Dualizing via Gelfand-Naimark \cite[Theorem III.2.2.4]{blk}, these are the algebras of the form $C(X)$ for profinite $X$. For that reason, commutative $C^*$-algebras cannot all be such colimits. 
\end{proof}

\subsection{Metric-space-enriched categories}\label{subse:enrc}

\cite[\S 1.2]{kly} defines the notion of a {\it $\cV$-category} (often also called a {\it $\cV$-enriched category}) for a monoidal \cite[\S 1.1]{kly} category $\cV$. That source quickly specializes in \cite[\S 1.6]{kly} to {\it symmetric monoidal closed} \cite[\S\S 1.4, 1.5]{kly} categories $\cV$. 

\cite[\S 6]{dr} places $\cC^*_{1}$ and $\cC^*_{c,1}$ (and categories of Banach spaces, etc.) in this enriched context, with $\cat{CMet}$ \cite[Examples 2.3 (2)]{ar-ap} as the enriching category $\cV$. This is the category of complete {\it generalized} metric spaces with {\it non-expansive maps} (also: `contractions') as morphisms:
\begin{equation*}
  f:(X,d_X)\to (Y,d_Y),\quad d_Y(fx,fx')\le d_X(x,x'),\ \forall x,x'\in X,
\end{equation*}
and `generalized' means that the distance is allowed infinite values. We have an enriched counterpart to \Cref{pr:fdim}.

\begin{proposition}\label{pr:enrcfdim}
  A commutative $C^*$-algebra is isometry-locally $\aleph_0$-generated in either of the $\cat{CMet}$-enriched categories $\cC^*_1$ or $\cC^*_{c,1}$ if and only if it is finite-dimensional. 
\end{proposition}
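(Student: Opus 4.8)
The plan is to follow the proof of \Cref{pr:fdim}, the one new ingredient being the description of $\aleph_0$-directed colimits of isometries on the hom-side: if $B=\varinjlim_iB_i$ with the $B_i\hookrightarrow B_j$ (and $B_i\hookrightarrow B$) isometric $C^*$-embeddings, then for any $C^*$-algebra $A$ the $\cat{CMet}$-colimit $\varinjlim_i\mathrm{hom}(A,B_i)$ is the metric \emph{completion} of the naive set-level colimit $\bigcup_i\mathrm{hom}(A,B_i)$ equipped with its natural metric, and the latter embeds isometrically into $\mathrm{hom}(A,B)$, the hom-metric being $d(f,g)=\sup_{\|a\|\le1}\|f(a)-g(a)\|$, i.e.\ the operator norm on differences. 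This completion is precisely why the enriched statement for $\cC^*_1$ is as permissive as for $\cC^*_{c,1}$, even though, as \Cref{pr:fdim} records, only $C^*$-algebras of dimension $\le1$ are $\aleph_0$-generated in the \emph{plain} category $\cC^*_1$.

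For the ``only if'' direction I would take $A\cong C(X)$ ($X$ compact Hausdorff) and feed it the same test system as in \Cref{pr:fdim}: writing $\beta X_d=\varprojlim_iX_i$ with all $X_i$ finite discrete and all transition maps surjective, dualization gives an $\aleph_0$-directed system of isometric $C^*$-embeddings $C(X_i)\hookrightarrow C(X_j)$ with colimit $\varinjlim_iC(X_i)\cong C(\beta X_d)$ in $\cC^*_1$ and in $\cC^*_{c,1}$, together with the injective morphism $\iota\colon C(X)\hookrightarrow C(\beta X_d)$ dual to $\beta X_d\twoheadrightarrow X$. Enriched $\aleph_0$-generation w.r.t.\ $\cM$ forces the comparison map $\varinjlim_i\mathrm{hom}(C(X),C(X_i))\to\mathrm{hom}(C(X),C(\beta X_d))$ to be an isomorphism in $\cat{CMet}$; in particular $\iota$ lies in the image, so it is an operator-norm limit of morphisms that factor through the finite-dimensional algebras $C(X_i)$. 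Each such morphism is a finite-rank operator $C(X)\to C(\beta X_d)$, so $\iota$ is a norm-limit of finite-rank operators, hence compact; but $\iota$ is an isometric embedding, and since the closed unit ball of an infinite-dimensional Banach space is not compact, $\iota$ cannot be compact unless $C(X)$ is finite-dimensional. Note that, unlike in the plain case, one does \emph{not} conclude that $\iota$ factors through a single $C(X_i)$ -- the completion in the enriched colimit genuinely enlarges the set of available morphisms, and the argument instead extracts a contradiction from compactness.

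For the ``if'' direction I would fix a finite-dimensional commutative $A\cong\bC^n$ and an arbitrary $\aleph_0$-directed system of isometric $C^*$-embeddings with colimit $B=\varinjlim_iB_i=\overline{\bigcup_iB_i}$ (in $\cC^*_1$; the case of $\cC^*_{c,1}$ is identical and in fact easier). Identifying a unital $*$-morphism $\bC^n\to B$ with an $n$-tuple of mutually orthogonal projections summing to $1_B$ exhibits $\mathrm{hom}(\bC^n,B)$ as a closed subset of $B^n$, hence complete in the hom-metric; and, as noted above, $\bigcup_i\mathrm{hom}(\bC^n,B_i)$ embeds isometrically into it. It therefore suffices to show this union is dense in $\mathrm{hom}(\bC^n,B)$, for then $\mathrm{hom}(\bC^n,B)$ is its metric completion, i.e.\ coincides with the $\cat{CMet}$-colimit $\varinjlim_i\mathrm{hom}(\bC^n,B_i)$, with the comparison map the resulting isometric isomorphism. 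Density is a standard perturbation: given orthogonal projections $p_1,\dots,p_n\in B$ summing to $1$ and $\varepsilon>0$, approximate $p_1$ in norm by a self-adjoint element of some $B_i$ and apply continuous functional calculus with a function that vanishes near (and at) $0$ and is $\equiv1$ near $1$ to get a genuine projection $q_1\in B_i$ close to $p_1$; then repeat inside the corner $(1-q_1)B(1-q_1)$ to produce $q_2\in B_j$ (some $j\ge i$) orthogonal to $q_1$ and close to $p_2$, and so on; finally set $q_n:=1-q_1-\dots-q_{n-1}$, and enlarge the index to place all the $q_j$ in a common $B_m$. (One could instead invoke semiprojectivity of finite-dimensional $C^*$-algebras.)

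The main obstacle is conceptual: recognizing that the $\cat{CMet}$-enriched colimit on the hom-side is the completion of the set-level colimit, and then seeing that for ``if'' this completion is exactly repaired by approximability of (tuples of) projections inside the $B_i$, while for ``only if'' it still cannot build the non-compact isometric embedding $\iota$ out of finite-rank pieces. The iterated functional-calculus perturbation in the density step is the only computational point, and it is routine book-keeping of error bounds.
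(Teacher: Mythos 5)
Your proposal is correct, and both halves run along the same tracks as the paper's proof: for ``if'' you perturb an $n$-tuple of orthogonal projections summing to $1$ into projections lying in some $B_i$, peeling them off one corner at a time exactly as in the paper (which quotes \cite[Proposition L.2.2]{wo} for the projection-approximation step); for ``only if'' you use the identical test diagram $C(\beta X_d)\cong\varinjlim_{\cF}C(X_\cF)$ dual to the finite discrete quotients of $\beta X_d$, together with the (correct, and in the paper implicit) observation that the $\cat{CMet}$-colimit of the hom-spaces is the completion of their union, so that enriched $\aleph_0$-generation forces $\iota\colon C(X)\hookrightarrow C(\beta X_d)$ to be a uniform-on-the-unit-ball limit of morphisms factoring through the $C(X_\cF)$. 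The one genuine divergence is the endgame of ``only if'': the paper finishes concretely, by taking two points $x\ne y$ in a single part of the partition and a norm-one $f$ with $f(x)=0$, $f(y)=1$, which stays at distance $\ge 1$ from every function constant on the parts; you instead note that morphisms through finite-dimensional algebras have finite rank, so $\iota$ would be a norm-limit of finite-rank operators, hence compact, contradicting non-compactness of the unit ball of an infinite-dimensional Banach space. Your version is slightly more conceptual (it never uses that the approximants are algebra maps, only their finite rank), while the paper's is more elementary and yields an explicit distance bound witnessing the failure of approximation; both are complete, and the remaining functional-calculus bookkeeping you defer is indeed routine.
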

\begin{proof}
  We prove the two implications separately.

  {\bf (1): finite-dimensional commutative $\Rightarrow$ $\aleph_0$-generated.} Since finite-dimensional commutative $C^*$-algebras are of the form $\bC^n$, we have to examine morphisms
  \begin{equation*}
    \phi:\bC^n\to A=\varinjlim_i A_i
  \end{equation*}
  for a colimit of embeddings. The projection $e_1:=(1,0,\cdots,0)\in \bC^n$ is arbitrarily approximable by a projection $p\in A_i$ for some $i$ \cite[Proposition L.2.2]{wo}, so our original morphism $\phi:\bC^n\to A$ is arbitrarily approximable by morphisms mapping $e_1$ to such a projection $p\in A_i$.

  Choosing such an approximation, assume now that $\phi(e_1) = p\in A_i$. We can work only with the indices $j\ge i$ (so that $p$ can be regarded as a projection in {\it all} $A_j$), and consider the morphism
  \begin{equation*}
    \phi|_{(1-e_1)\bC^n}:(1-e_1)\bC^n\to pAp = \varinjlim_i pA_ip.
  \end{equation*}
  A repetition of the previous procedure will now allow us to detach a further minimal projection $e_2$ from the $(n-1)$-dimensional $(1-e_1)\bC^n\cong \bC^{n-1}$ and assume, upon approximating, that $\phi(e_2)=q\in A_j$ for some $j$, etc.
  
  In short: the original morphism $\phi:\bC^n\to A$ is arbitrarily approximable, uniformly on the unit ball of $\bC^n$, by $C^*$-morphisms that factor through some $A_i$.
  
  {\bf (2): $\aleph_0$-generated commutative $\Rightarrow$ finite-dimensional.} We can repurpose the relevant portion of the proof of \Cref{pr:fdim}: write, once more, $A=C(X)$ for an infinite compact Hausdorff $X$, and consider the embedding
  \begin{equation*}
    \iota:C(X)\to C(\beta X_d)
  \end{equation*}
  for the Stone-\v{C}ech compactification $\beta X_d$ of the discrete space underlying $X$.

  $\beta X_d$ is the inverse limit of its surjections $\beta X_d\to X_\cF$ onto the finite discrete space associated to a finite partition $\cF$ of $X_d$, so that
  \begin{equation*}
    C(\beta X_d)\cong \varinjlim_{\cF}C(X_\cF). 
  \end{equation*}
  If $C(X)$ were isometry-locally $\aleph_0$-generated in the enriched sense, we could at least approximate $\iota$ arbitrarily well by morphisms $C(X)\to C(X_\cF)$ for various finite partitions $\cF$ of $X$. This means that for every $\varepsilon>0$ there is some finite partition $\cF=\cF_{\varepsilon}$ such that 
  \begin{equation*}
    \forall f\in C(X)\ \exists f'\in C(X_{\cF}):\ \|f-f'\|<\varepsilon\|f\|. 
  \end{equation*}
  In other words: every continuous function $f\in C(X)$ is $\varepsilon\|f\|$-close to some function $f'$ constant along every part of the finite partition
  \begin{equation*}
    \cF=(F_0,\ \cdots,\ F_n),\quad X=\bigsqcup_i F_i
  \end{equation*}
  But if $X$ is infinite, some $F_i$ contains two distinct points $x\ne y\in X$. We can then find a continuous
  \begin{equation*}
    f\in C(X),\ \|f\|=1,\ f(x)=0,\ f(y)=1,
  \end{equation*}
  and such a function is at least $1$ apart in the uniform norm from any $f'\in C(X_{\cF})$.
\end{proof}

This in turn entails the enriched version of \Cref{cor:ngen}.

\begin{corollary}\label{cor:enrcngen}
  The $\cat{CMet}$-enriched categories $\cC^*_1$ and $\cC^*_{c,1}$ are not isometry-locally $\aleph_0$-generated.
\end{corollary}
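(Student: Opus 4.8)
The plan is to reuse the argument for \Cref{cor:ngen} almost verbatim, feeding in the enriched classification \Cref{pr:enrcfdim} in place of \Cref{pr:fdim}. Suppose for contradiction that $\cC^*_{c,1}$ (respectively $\cC^*_1$) were isometry-locally $\aleph_0$-generated as a $\cat{CMet}$-enriched category, witnessed by a set $S$ of objects that are $\aleph_0$-generated in the enriched sense wrt the class $\cM$ of isometric $C^*$-morphisms. Then every commutative unital $C^*$-algebra $A\cong C(X)$ would be an $\aleph_0$-directed conical colimit $\varinjlim_i A_i$ of $\cM$-morphisms between objects of $S$.

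First I would dispatch the bookkeeping that makes this reduction usable: the $\aleph_0$-directed conical colimits relevant here are computed as in the underlying ordinary categories (this is the point of the enriched setup in \cite[\S 6]{dr}), and for a directed system of $C^*$-algebras along isometric embeddings the colimit coprojections remain injective. Hence each $A_i$ embeds isometrically into the commutative algebra $A$ and is therefore itself commutative; being $\aleph_0$-generated wrt $\cM$ in the enriched sense, \Cref{pr:enrcfdim} then forces each $A_i$ to be finite-dimensional, i.e. $A_i\cong C(X_i)$ with $X_i$ finite discrete. This is also the step that lets any non-commutative members of $S$ drop out of the discussion in the $\cC^*_1$ case, so that the single classification \Cref{pr:enrcfdim} suffices for both categories.

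Consequently $A=C(X)$ would be an inductive limit of finite-dimensional commutative $C^*$-algebras along embeddings — a commutative AF algebra in the sense of \cite[Definition 7.1.1]{blk-k} — so that, dualizing via Gelfand–Naimark \cite[Theorem III.2.2.4]{blk}, $X$ would have to be profinite. Taking $X=[0,1]$ (or any compact Hausdorff space that is not totally disconnected) yields the contradiction, and the same argument applies to $\cC^*_1$ since the $A_i$ landing via $\cM$ in a commutative $A$ are again commutative. The only part that is not purely formal is the reduction in the middle paragraph — confirming that the enriched $\aleph_0$-directed colimits agree with the ordinary ones and that commutativity propagates from $A$ down to every $A_i$ — and once that is in hand the corollary is immediate from \Cref{pr:enrcfdim}.
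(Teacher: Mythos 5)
Your proposal is correct and is essentially the paper's own argument: the paper proves this corollary exactly by rerunning the proof of \Cref{cor:ngen} with \Cref{pr:enrcfdim} substituted for \Cref{pr:fdim}, so that any commutative $A\cong C(X)$ written as a directed colimit of isometries between enriched $\aleph_0$-generated objects would be commutative AF, hence have profinite spectrum, contradicting e.g.\ $X=[0,1]$. Your extra bookkeeping (enriched conical directed colimits have the ordinary colimit as underlying object, coprojections of directed colimits along embeddings are injective, commutativity passes to the $A_i$) is the same implicit reduction the paper leaves unspoken, so there is no substantive difference.
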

\begin{proof}
  As in the proof of \Cref{cor:ngen}, using \Cref{pr:enrcfdim} in place of \Cref{pr:fdim}.
\end{proof}

With slightly more effort, we can transport part of \Cref{pr:enrcfdim} over to non-commutative $C^*$-algebras. The key point is

\begin{proposition}\label{pr:apmn}
  For any finite-dimensional $C^*$-algebra $B$, a unital $C^*$-morphism
  \begin{equation*}
    \phi:B\to A:=\varinjlim_i A_i
  \end{equation*}
  into a directed colimit of embeddings is arbitrarily approximable by morphisms $B\to A_i$.
\end{proposition}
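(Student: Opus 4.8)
**Proof plan for Proposition (approximation of morphisms from finite-dimensional $C^*$-algebras).**

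The plan is to reduce the general finite-dimensional $B$ to the building blocks $M_n(\bC)$ via the Wedderburn decomposition $B\cong\bigoplus_{k=1}^m M_{n_k}(\bC)$, and then to handle a single matrix block by lifting a complete system of matrix units. First I would recall that a unital $C^*$-morphism $\phi:M_n(\bC)\to A$ is determined by the image of a system of matrix units $\{e_{ij}\}_{1\le i,j\le n}$: the $e_{ii}$ are mutually orthogonal projections summing to $1$, and $e_{1j}$ are partial isometries with $e_{1j}^*e_{1j}=e_{jj}$, $e_{1j}e_{1j}^*=e_{11}$. The task is therefore to produce, for a given $\varepsilon>0$, a system of matrix units inside some $A_i$ that is $\varepsilon$-close to $\{\phi(e_{ij})\}$; conjugating $\phi$ by the resulting near-isomorphism of the images then gives a genuine morphism $M_n(\bC)\to A_i$ within $O(\varepsilon)$ of $\phi$ (this last bookkeeping step is where a small amount of perturbation theory for $C^*$-algebras is used, but it is entirely standard — see e.g. the matrix-unit perturbation lemmas in Bratteli-type arguments, or \cite[Proposition L.2.2]{wo} and its relatives already invoked in the proof of \Cref{pr:enrcfdim}).

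The key steps, in order, are: (i) approximate the single projection $\phi(e_{11})$ by a projection $p_1\in A_{i_1}$, using that $A=\varinjlim_i A_i$ is a directed colimit of embeddings so that the self-adjoint element $\phi(e_{11})$ is a norm-limit of self-adjoints from the $A_i$, and functional calculus turns a near-projection into an honest projection close by; (ii) having fixed $p_1$ in $A_{i_1}$, work inside the corners and inductively peel off $p_2,\dots,p_n$: each $\phi(e_{kk})$ lies in the corner cut by $1-\sum_{\ell<k}\phi(e_{\ell\ell})$, and the approximation in the previous paragraph's style lets us extract a projection close to $\phi(e_{kk})$ orthogonal to the previously chosen ones, passing to a larger index of the directed system each time; (iii) approximate the partial isometries $\phi(e_{1j})$ by elements $v_j\in A_{i_j}$ and polar-decompose/renormalize so that $v_j^*v_j=p_j$ and $v_jv_j^*=p_1$ exactly — again using functional calculus on $v_j^*v_j$, which is close to the projection $p_j$ — and then $v_j$ is forced to live in $p_1 A p_j$; (iv) since all the finitely many elements $p_1,\dots,p_n,v_2,\dots,v_n$ ultimately sit in some single $A_i$ (take $i$ dominating the finitely many indices used), reconstruct a morphism $\psi:M_n(\bC)\to A_i$ from these matrix units and estimate $\|\psi-\phi\|$ on the unit ball. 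Finally, (v) for general $B=\bigoplus_k M_{n_k}(\bC)$: the central projections $1_k\in B$ map to orthogonal projections $\phi(1_k)$ summing to $1$ in $A$; approximate these by orthogonal projections $q_k$ in a common $A_i$ as in step (i)-(ii), then run the matrix-block argument inside each corner $q_k A q_k$ and assemble.

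The main obstacle I expect is step (iii) — not the existence of an approximating element $v_j$, which is immediate from the directed-colimit hypothesis, but arranging that the \emph{exact} relations $v_j^*v_j=p_j$, $v_jv_j^*=p_1$ hold after perturbation, so that the $\{v_j\}$ together with the $\{p_k\}$ genuinely span a copy of $M_n(\bC)$ rather than merely an approximate one. The standard fix is: start from a crude approximant $w_j$ with $\|w_j-\phi(e_{1j})\|$ small; then $w_j^*w_j$ is close to the projection $p_j$, so $w_j p_j$ is close to $w_j$ and $(w_jp_j)^*(w_jp_j)=p_jw_j^*w_jp_j$ is an invertible element of the corner $p_jAp_j$ close to $p_j$; set $v_j:=(w_jp_j)\bigl((w_jp_j)^*(w_jp_j)\bigr)^{-1/2}$ (inverse square root computed in $p_jAp_j$), which is a partial isometry with $v_j^*v_j=p_j$, and check $v_jv_j^*$ equals $p_1$ after a symmetric adjustment — or, more cleanly, first replace $w_j$ by $p_1w_jp_j$ and then polar-decompose. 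One must be slightly careful that these manipulations do not leave the directed system, but since each is a continuous-functional-calculus or algebraic operation performed on finitely many elements of a single $A_i$, the result stays in $A_i$. Everything else is routine: directedness lets us collect the finitely many indices into one, and the final norm estimate $\|\psi-\phi\|\le C\varepsilon$ for an absolute constant $C$ (depending on $n$, or uniformly once one is careful) follows from expressing an arbitrary norm-one element of $B$ in the matrix-unit basis.
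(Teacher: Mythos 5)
Your argument is correct, but it takes a genuinely different route from the paper's at the key step. Both proofs make the same initial reduction, splitting a finite-dimensional $B$ along its minimal central projections so that the problem becomes approximating a unital morphism $M_n\to A=\varinjlim_i A_i$. From there the paper does \emph{not} lift matrix units: it presents $M_n$ as the universal $C^*$-algebra on two unitaries $U,S$ with $U^n=S^n=1$ and $USU^{-1}=\zeta S$ for a primitive $n$-th root of unity $\zeta$, approximates $U$ by an exact order-$n$ unitary $U'\in A_i$ by invoking the already-established commutative case (\Cref{pr:enrcfdim}), produces an approximate $\zeta$-eigenvector for $\mathrm{Ad}(U')$ by averaging $S$ over the conjugation action of $U'$, makes it unitary by polar decomposition, and finally applies a $\zeta$-equivariant continuous function on the circle (functional calculus) to force the exact relations. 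Your route is the classical matrix-unit perturbation argument of Bratteli--Glimm type: approximate the diagonal projections one corner at a time, approximate the partial isometries $\phi(e_{1j})$, and rigidify via $v_j:=(p_1w_jp_j)\bigl((p_1w_jp_j)^*(p_1w_jp_j)\bigr)^{-1/2}$; the one point you leave implicit--that $v_jv_j^*$, being a subprojection of $p_1$ within distance $<1$ of it, must equal $p_1$--is standard and easily supplied, as is the unitary conjugation (or compression) needed in your step (v) to pass from $\phi(1_k)$ to its approximant $q_k$, a point the paper is equally brief about. What each approach buys: yours is self-contained modulo standard perturbation lemmas (of the kind already cited from \cite{wo}) and gives the explicit matrix-unit bookkeeping with an explicit constant; the paper's exploits the two-unitary presentation to reuse its commutative result and thereby avoids the inductive corner-by-corner construction, at the cost of the spectral/equivariance argument for the shift unitary. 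Either proof establishes the proposition.
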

\begin{proof}
  Finite-dimensional $C^*$-algebras are finite products
  \begin{equation*}
    B\cong M_{n_1}\times\cdots\times M_{n_k}.
  \end{equation*}
  Separating out the individual central factors $M_{n_i}$ by means of minimal central projections as in the proof of \Cref{pr:enrcfdim}, we can focus on the individual matrix algebras $M_{n_i}$. In short, this allows us to assume $B=M_n$ for the rest of the proof.

  The matrix algebra $M_n$ can be realized as the universal $C^*$-algebra generated by two unitaries $S$ and $U$, with
  \begin{equation}\label{eq:usrels}
    S^n=U^n=1,\quad USU^{-1}=\zeta S
  \end{equation}
  for some primitive $n^{th}$ root of unity $\zeta$.

  We will henceforth identify $M_n$ with its image through (the automatically-injective) $\phi:M_n\to A$. The argument consists of a number of steps. Throughout, we write `$\simeq$' to mean `is close to'. This will avoid having to keep careful track of $\varepsilon$ estimates, 
  \begin{enumerate}[(1)]
  \item First, $U$ can be approximated arbitrarily well with some order-$n$ unitary $U'\in A_i$, since the $C^*$-algebra it generates is commutative and finite-dimensional, and the commutative case has already been handled (in \Cref{pr:enrcfdim}). Working only with indices dominating $i$, we can henceforth assume that all $A_j$ contain $U'$.
  \item Next, $S$ was a $\zeta$-eigenvector for $U$ and $U'$ is close to $U$, so the $U'$-$\zeta$-eigenvector
    \begin{equation*}
      S':=\frac 1n\sum_{s=0}^{n-1} \zeta^{-s}\cdot (U')^s S (U')^{-s}
    \end{equation*}
    is close to $S$, because
    \begin{equation*}
      S=\frac 1n\sum_{s=0}^{n-1} \zeta^{-s}\cdot U^s S U^{-s}.
    \end{equation*}
  \item Since conjugation by $U'$ leaves all $A_i$ invariant, we can further assume that $S'\simeq S$ belongs to one (and hence all) of the $A_i$.
  \item Being close to $S$ the, element $S'$ is invertible \cite[Lemma 4.2.1]{wo}. Its {\it polar decomposition} \cite[\S 1.5]{wo}
    \begin{equation*}
      S'=S'' P,\ S'', P\in A_i
    \end{equation*}
    then provides a {\it unitary} $U'$-$\zeta$-eigenvector $S''$. We still have $S''\simeq S$, because $S$ itself was unitary to begin with.
  \item The continuity of the spectrum on normal operators \cite[Solution 105]{hlm-hs} together with $S''\simeq S$ and
    \begin{equation*}
      U' S'' (U')^{-1} = \zeta S''
    \end{equation*}
    show that the spectrum $\sigma(S'')$ of $S''$ is a subset of the unit circle invariant under multiplication by $\zeta$ and concentrated around $\{\zeta^s\ |\ 0\le s<n\}$. This latter condition means that $\sigma(S'')$ is contained in the union of $n$ mutually-disjoint closed arcs
    \begin{equation*}
      I_s\ni \zeta^s,\quad 0\le s<n.
    \end{equation*}
    We can then find a continuous self-map $f:\bS^1\to \bS^1$, equivariant under multiplication by $\zeta$, that compresses each $I_s$ onto $\zeta^s$. The operator $f(S'')$ obtained by {\it functional calculus} \cite[\S II.2.3]{blk} now has all of the desired properties:
    \begin{itemize}
    \item it is unitary of order $n$;
    \item satisfies
      \begin{equation*}
        U' f(S'') (U')^{-1} = \zeta f(S'');
      \end{equation*}
    \item and is contained in $A_i$ and close to $S''\simeq S$.
    \end{itemize}
  \end{enumerate}
  We have thus achieved the desired goal of approximating $U$ and $S$ by unitaries in $A_i$ satisfying the same defining relations \Cref{eq:usrels} (with $U'$ and $f(S'')$ in place of $U$ and $S$ respectively). 
\end{proof}

The announced consequence is now simply a rephrasing of \Cref{pr:apmn}. 

\begin{corollary}\label{cor:apmn}
  Finite-dimensional $C^*$-algebras are isometry-locally $\aleph_0$-generated in the $\cat{CMet}$-enriched category $\cC^*_1$.  \qedhere
\end{corollary}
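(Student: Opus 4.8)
The plan is to read the corollary as what the preceding sentence advertises: a direct translation of \Cref{pr:apmn} through the definition of $\cat{CMet}$-enriched $\aleph_0$-generation. Fix a finite-dimensional $B\in\cC^*_1$ and let $\cM$ be the class of isometric unital $C^*$-morphisms. Unwinding \cite[Definitions 4.1 and 4.4]{dr}, the assertion to prove is that the $\cat{CMet}$-valued functor $\mathrm{hom}_{\cC^*_1}(B,-)$ carries every $\aleph_0$-directed (equivalently: directed) colimit $A=\varinjlim_i A_i$ along $\cM$-morphisms to the corresponding colimit in $\cat{CMet}$; that is, that the canonical comparison
\[
  \varinjlim_i\ \mathrm{hom}_{\cC^*_1}(B,A_i)\ \longrightarrow\ \mathrm{hom}_{\cC^*_1}(B,A)
\]
is an isomorphism in $\cat{CMet}$, where each hom-object carries the generalized metric $d(\phi,\psi)=\sup\{\|\phi(b)-\psi(b)\|\ :\ b\in B,\ \|b\|\le 1\}$ coming from the enrichment and the left-hand colimit is formed in $\cat{CMet}$.

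I would first dispatch the formal half. Since every structure map $A_i\to A$ is an isometric $C^*$-embedding, the induced maps $\mathrm{hom}_{\cC^*_1}(B,A_i)\to\mathrm{hom}_{\cC^*_1}(B,A_j)$, and likewise the maps into $\mathrm{hom}_{\cC^*_1}(B,A)$, are isometries for the metric above. Hence the directed colimit on the left is computed in $\cat{CMet}$ simply as the metric completion of the increasing union $\bigcup_i\mathrm{hom}_{\cC^*_1}(B,A_i)$, realized inside $\mathrm{hom}_{\cC^*_1}(B,A)$, and the comparison map is automatically an isometric embedding whose image is the $d$-closure of $\bigcup_i\mathrm{hom}_{\cC^*_1}(B,A_i)$. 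Being an isometric image of a complete space that closure is complete, so it equals all of $\mathrm{hom}_{\cC^*_1}(B,A)$ exactly when $\bigcup_i\mathrm{hom}_{\cC^*_1}(B,A_i)$ is dense there. (That $\mathrm{hom}_{\cC^*_1}(B,A)$ is itself $\cat{CMet}$-complete is part of the enrichment of \cite{dr}, and in any case re-emerges from the argument once density is known, since a $d$-Cauchy sequence of unital $*$-homomorphisms converges pointwise to a unital $*$-homomorphism, the $C^*$-identities surviving the limit by continuity of the operations.)

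It then remains to establish that density, and this is verbatim the content of \Cref{pr:apmn}: a unital $C^*$-morphism $\phi:B\to A$ into the directed colimit of embeddings is approximable, uniformly on the unit ball of $B$, by morphisms factoring through some $A_i$ --- which is precisely the statement that $\phi$ lies in the $d$-closure of $\bigcup_i\mathrm{hom}_{\cC^*_1}(B,A_i)$. Since every morphism in $\cC^*_1$ is unital, \Cref{pr:apmn} applies to all of $\mathrm{hom}_{\cC^*_1}(B,A)$, and the reduction from matrix algebras to an arbitrary finite-dimensional $B$ via minimal central projections has already been carried out there. Assembling these pieces yields the corollary. I do not anticipate a genuine obstacle here: the substantive work is exactly \Cref{pr:apmn}, and what is left is the routine bookkeeping of how $\aleph_0$-directed colimits and hom-objects are formed in $\cat{CMet}$; the one point meriting a moment's care is the identification of the $\cat{CMet}$-colimit of a system with isometric transition maps as the completion of the union.
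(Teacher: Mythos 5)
Your proposal is correct and follows exactly the paper's route: the paper offers no separate argument, stating only that the corollary is a rephrasing of \Cref{pr:apmn}, and your unwinding of the $\cat{CMet}$-enriched $\aleph_0$-generation condition (isometric hom-maps, colimit as completion of the union, density supplied by \Cref{pr:apmn}) is precisely the routine bookkeeping that rephrasing tacitly relies on. Your parenthetical check that a $d$-Cauchy sequence of unital $*$-homomorphisms converges to one is a welcome, if minor, addition.
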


The adjunctions between
\begin{itemize}
\item $\cC^*_1$ and $\cat{Ban}$;
\item and similarly, $\cC^*_{c,1}$ and $\cat{Ban}$
\end{itemize}
can be put to much good use, but we caution the reader that they are not enriched over $\cat{CMet}$. This is already visible when working with the simplest examples, as the following discussion prompted by \cite{ros-email} illustrates (focusing on $\cC^*_{c,1}$, i.e. {\it commutative} $C^*$-algebras).

\begin{example}\label{ex:adjnotenrc}
  As noted on \cite[p.173]{sem-bm}, the left adjoint $F$ to the forgetful functor $\cC^*_{c,1}\to\cat{Ban}$ (the so-called {\it Banach-Mazur functor}) can be described explicitly as
  \begin{equation*}
    \cat{Ban}\ni X\mapsto C(X^*_1)\in \cC^*_{c,1},
  \end{equation*}
  where $X^*_1$ is the unit ball of the Banach-space dual $X^*$, equipped with the weak$^*$ topology (wherein $X^*_1$ is compact, by the Banach-Alaoglu theorem \cite[Theorem 3.15]{rud-fa}).

  Consider the one-dimensional Banach space $\bC$. We then have $F(\bC)\cong C(D)$, where $D\subset \bC$ is the unit disk. The adjunction $F \dashv \cat{forget}$ gives a bijection
  \begin{equation*}
    \mathrm{hom}_{\cC^*_{c,1}}(C(D),\bC)\cong \mathrm{hom}_{\cat{Ban}}(\bC,\bC),
  \end{equation*}
  which however is not an isometry in $\cat{CMet}$ (or indeed, even a homeomorphism). To see this, note that
  \begin{itemize}
  \item The metric space $\mathrm{hom}_{\cat{Ban}}(\bC,\bC)$ is $D$ with its usual metric, identifying a contraction
    \begin{equation*}
      T:\bC\to \bC
    \end{equation*}
    with $T(1)\in D$.
  \item On the other hand, $\mathrm{hom}_{\cC^*_{c,1}}(C(D),\bC)$ is the {\it spectrum} \cite[\S II.2.1.4]{blk} $D$ of $C(D)$, metrized with the uniform distance on the unit ball of $C(D)$. Since for every two distinct points $x_{\pm 1}\in D$ we can find a continuous function
    \begin{equation*}
      f:D\to [-1,1],\quad f(x_{\pm})=\pm 1. 
    \end{equation*}
    (e.g. by the {\it Tietze extension theorem}) \cite[Theorem 35.1]{mnk}, the distance between any two distinct points of $D$ is $2$. The topology acquired by $D$ as the metric space $\mathrm{hom}_{\cC^*_{c,1}}(C(D),\bC)$ is thus discrete.
  \end{itemize}
\end{example}

\Cref{ex:adjnotenrc} in fact illustrates a broader principle: as a $\cat{CMet}$-enriched category, $\cC^*_{c,1}$ is no more interesting than the plain category $\cC^*_{c,1}$:

\begin{proposition}\label{pr:discmet}
  For any two commutative $C^*$-algebras $A,B\in \cC^*_{c,1}$, the metric space
  \begin{equation*}
    \mathrm{hom}_{\cC^*_{c,1}}(A,B)\in \cat{CMet}
  \end{equation*}
  is discrete, with distance $2$ between any two distinct elements.
\end{proposition}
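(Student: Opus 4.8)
The plan is to realize both $A$ and $B$ via Gelfand–Naimark as algebras of continuous functions, $A\cong C(X)$ and $B\cong C(Y)$ for compact Hausdorff spaces $X,Y$, so that a morphism $A\to B$ is the same as a continuous map $Y\to X$, i.e. an element of $\mathrm{hom}_{\cC^*_{c,1}}(A,B)$ is pulled back from such a map. The $\cat{CMet}$-metric on $\mathrm{hom}_{\cC^*_{c,1}}(A,B)$ is the sup-norm distance on the unit ball of $A$: for morphisms $\varphi,\psi\colon A\to B$ we have $d(\varphi,\psi)=\sup_{\|a\|\le 1}\|\varphi(a)-\psi(a)\|$. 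The claim is that this distance is $0$ when $\varphi=\psi$ and $2$ otherwise; completeness of the resulting discrete space is automatic.

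First I would observe that the distance is at most $2$, since for $\|a\|\le 1$ we have $\|\varphi(a)-\psi(a)\|\le\|\varphi(a)\|+\|\psi(a)\|\le 2$ (morphisms of unital $C^*$-algebras are contractive). Next, supposing $\varphi\ne\psi$, the corresponding continuous maps $f,g\colon Y\to X$ differ, so there is a point $y_0\in Y$ with $f(y_0)\ne g(y_0)$; call these distinct points $x_{+}:=f(y_0)$ and $x_{-}:=g(y_0)$ in $X$. By the Tietze extension theorem (as in \Cref{ex:adjnotenrc}, \cite[Theorem 35.1]{mnk}) applied to the two-point closed subset $\{x_+,x_-\}$ of the compact Hausdorff (hence normal) space $X$, there is a continuous $a\in C(X)$ with $\|a\|=1$, $a(x_+)=1$ and $a(x_-)=-1$. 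Then $\varphi(a)(y_0)=a(f(y_0))=1$ and $\psi(a)(y_0)=a(g(y_0))=-1$, so $\|\varphi(a)-\psi(a)\|\ge |1-(-1)|=2$. Combining the two bounds gives $d(\varphi,\psi)=2$, as desired.

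There is no real obstacle here; this is essentially the computation already carried out for the special case $B=\bC$ in \Cref{ex:adjnotenrc}, and the only thing to be careful about is that one needs $X$ genuinely Hausdorff for the Tietze argument to produce separating functions — but that is guaranteed by Gelfand–Naimark, \cite[Theorem III.2.2.4]{blk}. One slightly delicate point worth spelling out: the morphism $A\to B$ need not correspond to a \emph{surjective} or injective map $Y\to X$, but that is irrelevant — all we use is that a nonequal pair of morphisms induces a nonequal pair of maps, which in turn forces disagreement at some point $y_0$, and then we pull back a separating function on $X$. The completeness requirement in $\cat{CMet}$ is vacuous for a discrete metric space, so nothing further is needed.
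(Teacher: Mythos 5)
Your proof is correct and follows essentially the same route as the paper: realize $A\cong C(X)$, $B\cong C(Y)$ via Gelfand--Naimark, dualize distinct morphisms to distinct continuous maps $Y\to X$, separate the two image points by a norm-one function taking values $\pm 1$ to get the lower bound $2$, and use contractivity of unital $C^*$-morphisms for the upper bound. Nothing is missing.
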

\begin{proof}
  The proof is essentially contained in \Cref{ex:adjnotenrc}. We have
  \begin{equation*}
    A\cong C(X),\quad B\cong C(Y)
  \end{equation*}
  for compact Hausdorff $X$ and $Y$, and distinct morphisms $\varphi\ne \psi:A\to B$ dualize to distinct continuous maps
  \begin{equation*}
    \varphi^*\ne \psi^*:Y\to X.
  \end{equation*}
  This means that
  \begin{equation*}
    x_+:=\varphi^*(y)\ne \psi^*(y)=:x_-,\quad\text{for some }y\in Y,
  \end{equation*}
  and a continuous norm-1 function 
  \begin{equation*}
    f:X\to [-1,1],\quad \varphi(x_{\pm}) = \pm 1
  \end{equation*}
  will be mapped by $\varphi$ and $\psi$ to norm-1 functions on $Y$ that are at least 2 apart. This shows that $d(\varphi,\psi)\ge 2$. On the other hand, because $\varphi$ and $\psi$ are contractions, we have
  \begin{equation*}
    d(\varphi,\psi) = \sup_{\|f\|\le 1}\|\varphi(f)-\psi(f)\|\le 2. 
  \end{equation*}
  This concludes the proof that the distance between two distinct morphisms between two commutative $C^*$-algebras is precisely 2.
\end{proof}

\begin{remark}
  Nothing like \Cref{pr:discmet} holds for the category $\cC^*_1$ of {\it arbitrary} (unital but not necessarily commutative) $C^*$-algebras: the identity automorphism of a non-commutative $C^*$-algebra $A$ will generally be approximable arbitrarily well by inner automorphisms
  \begin{equation*}
    A\ni a\mapsto uau^*\in A
  \end{equation*}
  for unitaries $u\in A$ close to $1$.
\end{remark}

\Cref{pr:discmet} has some consequences for the structure of $\cC^*_{c,1}$ as a $\cat{CMet}$-enriched category. First, recall the following notion from \cite[\S 3.7]{kly}.

\begin{definition}\label{def:enrc}
  A $\cV$-enriched category $\cC$ over a symmetric monoidal closed category $\cV$ is {\it $\cV$-tensored} (or {\it tensored over $\cV$}) if for objects $x\in \cV$ and $c\in \cC$ the $\cV$-functor
  \begin{equation*}
    \cV(x,\cC(c,-)):\cC\to \cV
  \end{equation*}
  is representable. In that case we denote the representing object by $x\otimes c$.
\end{definition}

\begin{proposition}\label{pr:cctens}
  The $\cat{CMet}$-enriched category $\cC^*_{c,1}$ is tensored over $\cat{CMet}$.
\end{proposition}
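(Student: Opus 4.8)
The plan is to leverage \Cref{pr:discmet}: every hom-object $\mathrm{hom}_{\cC^*_{c,1}}(A,B)$ is what I will call \emph{$2$-discrete} — a metric space in which distinct points lie at distance exactly $2$ — and such spaces are automatically complete, hence objects of $\cat{CMet}$. First I would set up the fully faithful functor $(-)_2\colon\cat{Set}\to\cat{CMet}$ sending a set to the $2$-discrete space on it, and observe that it has a left adjoint $\pi_0$: for a complete generalized metric space $M$, $\pi_0 M$ is the quotient of its underlying set by the equivalence relation generated by $d(x,y)<2$. The key point is that a non-expansive map $M\to (S)_2$ is precisely a function constant on each such class, and that $\mathrm{hom}_{\cat{CMet}}(M,(S)_2)$ is itself $2$-discrete; thus $\mathrm{hom}_{\cat{CMet}}(M,(S)_2)\cong\bigl(\cat{Set}(\pi_0 M,S)\bigr)_2$ naturally in $S$.

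Granting this, \Cref{pr:discmet} says the $\cat{CMet}$-functor $\mathrm{hom}_{\cC^*_{c,1}}(A,-)$ factors through $2$-discrete spaces, so for each $B$ one gets natural identifications
\begin{equation*}
  \mathrm{hom}_{\cat{CMet}}\bigl(M,\ \mathrm{hom}_{\cC^*_{c,1}}(A,B)\bigr)\ \cong\ \bigl(\cat{Set}(\pi_0 M,\ \mathrm{hom}_{\cC^*_{c,1}}(A,B))\bigr)_2.
\end{equation*}
I would then \emph{define} $M\otimes A:=\coprod_{\pi_0 M}A$, the coproduct of $\pi_0 M$ copies of $A$ taken in the ordinary category $\cC^*_{c,1}$, which exists since that category is cocomplete. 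The universal property of coproducts furnishes natural bijections $\mathrm{hom}_{\cC^*_{c,1}}(M\otimes A,B)\cong\cat{Set}(\pi_0 M,\mathrm{hom}_{\cC^*_{c,1}}(A,B))$; because the left-hand side is again $2$-discrete by \Cref{pr:discmet}, these bijections are isometries, and composing with the display above yields the natural isomorphism of $\cat{CMet}$-functors $\mathrm{hom}_{\cC^*_{c,1}}(M\otimes A,-)\cong\mathrm{hom}_{\cat{CMet}}(M,\mathrm{hom}_{\cC^*_{c,1}}(A,-))$ demanded by \Cref{def:enrc}.

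The main obstacle is not a mathematical difficulty but a bookkeeping one: I must check that these set-level natural bijections genuinely assemble into a $\cat{CMet}$-\emph{enriched} natural isomorphism of $\cat{CMet}$-functors, compatibly with the enrichment of $\mathrm{hom}_{\cC^*_{c,1}}(A,-)$ coming from \cite{dr} and the (conical) nature of the coproduct. Here \Cref{pr:discmet} does the work a second time: since both functors are valued in the full $\cat{CMet}$-subcategory of $2$-discrete spaces, every component map is automatically non-expansive, and — morphisms of $\cat{CMet}$ being determined by their underlying functions — a $\cat{CMet}$-natural transformation between such functors is exactly its underlying ordinary natural transformation, which is already in hand. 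So the whole verification reduces to spelling out this observation together with the adjunction $\pi_0\dashv(-)_2$ and the universal property of a coproduct in $\cC^*_{c,1}$.
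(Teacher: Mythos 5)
Your proposal is correct and follows essentially the same route as the paper: the paper also invokes \Cref{pr:discmet} to reduce everything to $2$-discrete hom-spaces, forms the quotient $X/\sim$ by the relation generated by ``distance $<2$'' (your $\pi_0 M$), and defines $X\otimes A$ as the corresponding copower $A^{\otimes(X/\sim)}$, i.e.\ the coproduct of copies of $A$ in $\cC^*_{c,1}$. Your extra bookkeeping on the adjunction $\pi_0\dashv(-)_2$ and on upgrading the set-level bijections to a $\cat{CMet}$-natural isometry just makes explicit what the paper leaves implicit.
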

\begin{proof}
  It will be enough to prove that for $X\in \cat{CMet}$ and commutative $C^*$-algebras $A,B\in \cC^*_{c,1}$ we have natural identifications
  \begin{equation}\label{eq:xab}
    \cC^*_{c,1}(X\otimes A,B)\cong \cat{CMet}(X,\cC^*(A,B))
  \end{equation}
  of {\it sets} (for some appropriate $X\otimes A\in \cC^*_{c,1}$): \Cref{pr:discmet} will then provide the identification as {\it metric spaces}, since both sides are discrete metric spaces with distance 2 between any two distinct points.

  Appealing again to \Cref{pr:discmet} for the discrete metric structure of $\cC^*_{c,1}(A,B)$, the right-hand side of \Cref{eq:xab} (regarded as a set) is nothing but
  \begin{equation*}
    \cat{Set}(X/\sim,\ \cC^*(A,B)),
  \end{equation*}
  where `$\sim$' is the smallest equivalence relation on $X$ that identifies two points less than 2 apart. But then we can simply define the tensor product $X\otimes A$ as
  \begin{equation*}
    X\otimes A := (X/\sim)\otimes A = A^{\otimes (X/\sim)},
  \end{equation*}
  where
  \begin{itemize}
  \item the second `$\otimes$' denotes the tensored structure of $\cC^*_{c,1}$ over \cat{Set} (rather than \cat{CMet});
  \item and the last equality identifies that tensor product with the corresponding {\it copower} \cite[\S 3.7]{kly} of $A$, i.e. coproduct in $\cC^*_{c,1}$ of copies of $A$.
  \end{itemize}
  As explained, this concludes the argument.
\end{proof}

\section{$W^*$-algebras}\label{se:wast}

We make a few remarks on the category $\cW^*_1$ of {\it von Neumann (or $W^*$-)algebras} (\cite[\S III]{blk}, \cite[Definitions II.3.2 and III.3.1]{tak1}, etc.). These are the (unital) $C^*$-algebras that satisfy any of a number of mutually equivalent conditions:
\begin{itemize}
\item they are realizable as $C^*$-subalgebras $A\subseteq B(\cH)$ for some Hilbert space $\cH$ so that $A=A''$, the {\it double commutant} (i.e. the set of operators commuting with everything that centralizes $A$);
\item or $A\subseteq B(\cH)$ is closed in any of the six ``weak'' operator topologies of \cite[\S II.2]{tak1} (weak, $\sigma$-weak, strong, $\sigma$-strong, strong$^*$, $\sigma$-strong$^*$) \cite[Part I, \S 3.4, Theorem 2]{dixw};
\item or $A$ is a $C^*$-algebra that is also a dual Banach space (whose predual is then automatically unique) \cite[Theorem III.3.5 and Corollary III.3.9]{tak1}.
\end{itemize}

The morphisms in $\cW^*_1$ are the unital {\it normal} \cite[Proposition III.2.2.2]{blk} $C^*$-morphisms, i.e. those that are continuous for the $\sigma$-weak topologies induced by the realizations of the von Neumann algebras as duals.

The resulting category $\cW^*_1$ is {\it almost} that studied in \cite{gch}, the difference being that the latter source considers possibly non-unital morphisms: this unitality is what the `$1$' subscript on `$\cW^*_1$' is meant to remind the reader of. In fact, it will be convenient to also, on occasion, refer to the category of \cite{gch}: von Neumann algebras with normal, not-necessarily-unital morphisms (if only to contrast several results and phenomena). The symbol for this latter category will be $\cW^*$.

Despite the difference between $\cW^*$ (central to \cite{gch}) and $\cW^*_1$ (studied here), the arguments of loc.cit. do go through with only minor modifications to provide arbitrary coproducts in $\cW^*_1$ by assembling together finite coproducts \cite[\S 5]{gch} and filtered colimits \cite[\S 7]{gch}. Coequalizers are also easily constructed \cite[\S 2, p.43]{gch}: for parallel $\cW^*_1$-morphisms
\begin{equation*}
  f,g:A\to B
\end{equation*}
simply quotient $B$ by the $\sigma$-weakly-closed ideal generated by
\begin{equation*}
  \{f(a)-g(a)\ |\ a\in A\}. 
\end{equation*}
All of this is to show that, per \cite[\S V.2, Theorem 1]{mcl}, we have (\cite[Proposition 5.7]{krn-qcoll})
\begin{proposition}\label{pr:wccmpl}
  The category $\cW^*_1$ is cocomplete.  \qedhere
\end{proposition}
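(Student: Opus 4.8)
The plan is to reduce everything to the classical principle \cite[\S V.2, Theorem 1]{mcl}: a category that has all small coproducts and all coequalizers of parallel pairs is cocomplete, since an arbitrary colimit can be realized as the coequalizer of a suitable pair of maps between two coproducts (one indexed by the objects, one by the arrows of the diagram). So I would only need to produce coproducts and coequalizers in $\cW^*_1$, and both are essentially furnished by \cite{gch} once one checks that restricting to \emph{unital} morphisms causes no trouble.

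For coproducts I would first recall the construction of \emph{finite} $W^*$-coproducts from \cite[\S 5]{gch}: represent the algebraic free product of $A_1,\dots,A_n$ on a Hilbert space assembled from their predual/GNS data and take the $\sigma$-weak closure; the universal property against a target $B$ holds because a compatible family of normal morphisms $A_i\to B$ extends over the closure. The unitality constraint only simplifies this (the unit of the coproduct is the common image of the $1_{A_i}$, and no support-projection bookkeeping is needed). An arbitrary coproduct is then obtained, as in \cite[\S 7]{gch}, as the filtered colimit
\begin{equation*}
  \coprod_{i\in I}A_i \;=\; \varinjlim_{J\subseteq I\ \text{finite}}\ \coprod_{i\in J}A_i
\end{equation*}
along the inclusion maps, filtered colimits in $\cW^*_1$ existing by the (again routine) unital version of \cite[\S 7]{gch}; a cofinality argument identifies this colimit with the $I$-indexed coproduct.

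For coequalizers of parallel normal unital $f,g\colon A\to B$ I would take $B\twoheadrightarrow B/I$, where $I$ is the $\sigma$-weakly closed two-sided ideal generated by $\{f(a)-g(a):a\in A\}$, as in \cite[\S 2, p.43]{gch}; note $I$ is automatically self-adjoint since $f,g$ are $*$-morphisms. The single structural input — and the step I expect to be the only non-formal one — is the standard classification of $\sigma$-weakly closed two-sided ideals in a von Neumann algebra as $I=zB$ for a unique central projection $z$ (see e.g.\ \cite{tak1, dixw}); then $B/I\cong(1-z)B$ is again a von Neumann algebra and the quotient map is the normal corner map $b\mapsto(1-z)b$. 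The universal property is then formal: a normal unital $\psi\colon B\to C$ with $\psi f=\psi g$ kills every $f(a)-g(a)$, hence by multiplicativity and $\sigma$-weak continuity kills $I$, and so factors uniquely (and normally) through $B/I$.

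Combining these with \cite[\S V.2, Theorem 1]{mcl} yields the assertion. The genuinely substantive content is von Neumann-algebraic — the spatial construction of finite $W^*$-coproducts and the central-projection description of weakly closed ideals — rather than categorical; the cofinality bookkeeping for arbitrary coproducts and the checks that the comparison maps are normal are routine, and, as noted, the unitality hypothesis removes rather than adds complications relative to \cite{gch}.
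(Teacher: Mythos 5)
Your proposal is correct and follows essentially the same route as the paper: cocompleteness via \cite[\S V.2, Theorem 1]{mcl}, with arbitrary coproducts assembled from the finite coproducts of \cite[\S 5]{gch} and filtered colimits of \cite[\S 7]{gch} (adapted to unital morphisms), and coequalizers obtained by quotienting by the $\sigma$-weakly closed ideal generated by $\{f(a)-g(a)\}$ as in \cite[\S 2, p.43]{gch}. The only difference is that you spell out the central-projection description of the quotient, which the paper leaves implicit.
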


\subsection{(Non-)presentability}\label{subse:npres}

One occasionally finds claims or suggestions to the effect that $\cW^*_1$ is locally presentable. This is stated outright in \cite{nlab-vna}, for instance, and claimed for {\it $i\bR$-graded} von Neumann algebras in \cite[paragraph following Definition 3.4]{pvl-lp} (the grading will not make much of a difference; see \Cref{subse:grd}). We prove here the following strong negation of local presentability.

\begin{theorem}\label{th:nopres}
  The only presentable objects in the category $\cW^*_1$ of von Neumann algebras with normal unital morphisms are the zero algebra and $\bC$.
\end{theorem}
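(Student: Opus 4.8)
The strategy is to show that for any von Neumann algebra $A$ of dimension $\ge 2$, there exists a $\kappa$-directed colimit in $\cW^*_1$ through which $\mathrm{hom}_{\cW^*_1}(A,-)$ fails to commute, for every regular $\kappa$. The obvious source of such colimits is the filtered-colimit machinery of \cite[\S 7]{gch}: given a $\kappa$-directed family of von Neumann algebras $(B_i)$ with normal unital connecting morphisms, the colimit $B = \varinjlim_i B_i$ in $\cW^*_1$ is a certain $W^*$-completion of the purely algebraic / $C^*$-level colimit, and a morphism $A \to B$ need not factor through any $B_i$ even when $A$ is small. The cleanest obstruction comes from projections: a unital morphism $\bC^2 \to B$ is the same as a projection $p \in B$, so if dimension $\ge 2$ were $\kappa$-presentable, then in every such $\kappa$-directed colimit every projection of $B$ would already live in some $B_i$.

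So the heart of the argument is to exhibit, for each regular $\kappa$, a $\kappa$-directed colimit $B = \varinjlim_i B_i$ in $\cW^*_1$ containing a projection belonging to no $B_i$. First I would handle the general case by reducing to $\bC^2$: since the $\kappa$-presentability property passes to quotients (a quotient of $A$ is the coequalizer of a pair out of $A$, and $\mathrm{hom}(-,-)$ interacts well with this), and every von Neumann algebra of dimension $\ge 2$ surjects onto $\bC^2$ (pick a nontrivial projection and map to its two ``corners''), it suffices to refute $\kappa$-presentability of $\bC^2$. For the colimit itself, the natural candidate is a large analogue of the AFD/hyperfinite construction used in \Cref{pr:fdim}\Cref{item:2}: take an increasing $\kappa$-indexed chain of matrix algebras (or of finite-dimensional algebras) with the tensor-by-identity embeddings, but now realize the colimit \emph{inside} $\cW^*_1$, i.e. as the von Neumann algebra generated in a suitable representation, e.g. the hyperfinite $\mathrm{II}_1$ factor or an $L^\infty$ of a product probability space of weight $\kappa$. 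One then points to a projection in the $W^*$-colimit — for instance a spectral projection of a self-adjoint element that is a genuine $\sigma$-weak limit, not a norm limit, of elements of the $B_i$ — which provably lies in no $B_i$ because each $B_i$ is, say, finite-dimensional or has separable predual while the projection ``uses'' coordinates beyond any single index. The cited \cite[Example 1.3]{laz-dir} already gives this for $\kappa = \aleph_0$; the task is to scale it up.

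The main obstacle I anticipate is precisely this scaling: one must be careful that the $\kappa$-directed diagram really has all $\kappa$-small subfamilies bounded \emph{and} that the $\cW^*_1$-colimit (not the $C^*$-colimit) still fails to absorb the chosen projection. Computing colimits in $\cW^*_1$ is delicate — the $W^*$-completion can collapse things, and a projection that is ``new'' at the $C^*$ level might become redundant after $\sigma$-weak closure. The safe route is to choose the diagram so that the colimit is a concrete, well-understood von Neumann algebra (a product $\prod_{s \in \kappa} M_{n_s}$ with the $\sigma$-weak topology, whose projections are transparent, or an $L^\infty$ of an uncountable product of finite probability spaces), arrange the $B_i$ to be the subalgebras depending on $<\kappa$ coordinates, and then simply write down a projection (a characteristic function, or a diagonal idempotent) supported on a set of coordinates of size $\kappa$. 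Directedness is automatic for the coordinate-subset poset, each $B_i$ is a von Neumann algebra and the inclusions are normal unital, the colimit is the whole product/$L^\infty$ by a standard density argument, and the chosen projection lies in no $B_i$ by inspection — giving a morphism $\bC^2 \to B$ not factoring through any $B_i$, hence $\bC^2$ is not $\kappa$-presentable for any $\kappa$. Finally, the fact that $\{0\}$ and $\bC$ \emph{are} presentable (indeed $\aleph_0$-presentable) is immediate: $\mathrm{hom}_{\cW^*_1}(\{0\},-)$ is constant and $\mathrm{hom}_{\cW^*_1}(\bC,-)$ is a singleton on every object, since $\bC$ is initial in $\cW^*_1$.
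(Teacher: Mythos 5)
Your overall plan (obstruct factorization with a projection living in no constituent of a $\kappa$-directed colimit) is the same germ as the paper's, but two of your key steps have genuine gaps. First, the reduction of the whole theorem to $\bC^2$ fails: the map ``to the two corners'' of a non-central projection $p$, $a\mapsto (pap,(1-p)a(1-p))$, is not multiplicative, and a factor of dimension $\ge 2$ ($M_n$, $B(\cH)$, a $\mathrm{II}_1$ factor, \ldots) admits \emph{no} unital normal morphism onto $\bC^2$ at all, since its only $\sigma$-weakly closed ideals are $0$ and the whole algebra. Moreover, presentability (unlike $\aleph_0$-generation, which is what \Cref{pr:fdim} passes to quotients) does not descend along quotients in general, and even the weaker descent of a factorization needs the colimit's structure maps to be monic. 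This is precisely why the paper cannot stop at $\bC^2$: it reduces via Cartesian factors, flat tensorands (comparing $\widehat{\otimes}$ with $\overline{\otimes}$) and the type decomposition to \emph{two} irreducible cases, the abelian one (\Cref{pr:ab}) and the matrix one (\Cref{pr:mtrx}); the latter needs a separate argument in which $M_2\to B(\cH)$ is lifted along the canonical surjection $\varinjlim_S B_S\to B(\cH)$ using $2$-homogeneity and the central-projection form of $W^*$-surjections, because no quotient onto $\bC^2$ is available there.

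Second, your ``safe route'' for producing the colimit assumes that if $B_i$ are the subalgebras of a concrete algebra ($\prod_s M_{n_s}$, an $L^\infty$ of a big product, the hyperfinite factor) depending on fewer than $\kappa$ coordinates, then the $\cW^*_1$-colimit is the ambient algebra ``by a standard density argument''. That is false: directed colimits in $\cW^*_1$ are not the von Neumann subalgebra generated by the union. \Cref{le:discr-colim} and the remark following it make this explicit: for $A=\ell^{\infty}(\kappa)$ and $A_S$ the unital subalgebras generated by fewer than $\kappa$ minimal projections, $\varinjlim_S A_S\cong A\times\bC\neq A$ (the canonical map to $A$ is onto but not injective); likewise the $\cW^*_1$-colimit of the chain $M_{2^n}$ is the enveloping von Neumann algebra of the CAR algebra (hom out of it classifies arbitrary unital $*$-morphisms of the $C^*$-inductive limit, with no normality constraint), not the hyperfinite $\mathrm{II}_1$ factor. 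You flag the worry that the $W^*$-completion ``can collapse things'' but then dismiss it with density---and that dismissal is exactly the missing step. Repairing it requires either computing the colimit from its universal property (the content of \Cref{le:discr-colim}, where the answer $\ell^{\infty}(\kappa_*)$, with its extra point, is what makes the projection $\chi_\kappa$ available and outside every $C_S$), or, as in \Cref{pr:mtrx}, refraining from identifying the colimit and instead lifting the relevant morphism along the surjection onto the concrete algebra. A minor slip in your positive direction: $\mathrm{hom}_{\cW^*_1}(\{0\},-)$ is not constant---$\{0\}$ is terminal, not initial, and this hom-set is empty on every nonzero algebra---so the presentability of $\{0\}$ amounts to the (nontrivial) claim that a directed colimit of nonzero von Neumann algebras is nonzero, rather than to a tautology.
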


Before turning to the proof, note the obvious consequence:

\begin{corollary}\label{cor:npres}
  The category $\cW^*_1$ of von Neumann algebras is not locally presentable.  \qedhere
\end{corollary}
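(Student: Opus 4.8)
The plan is to read the corollary off from \Cref{th:nopres} by invoking the standard structural fact that in a locally presentable category every object is presentable. Recall why this holds: if $\cC$ is locally $\kappa$-presentable, then by \Cref{def:lp} there is a set $S$ of $\kappa$-presentable objects such that each $c\in\cC$ arises as a $\kappa$-directed colimit of a diagram $D\colon I\to\cC$ taking values in $S$. Choosing a regular cardinal $\lambda\ge\kappa$ exceeding the cardinality of $I$, a routine argument shows that $c$ is $\lambda$-presentable, hence presentable; this is standard (see \cite[\S 1.B]{ar}).

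With that principle in hand, I would argue by contradiction. Suppose $\cW^*_1$ were locally presentable. Then every von Neumann algebra would be a presentable object of $\cW^*_1$. On the other hand, \Cref{th:nopres} asserts that the only presentable objects are $\{0\}$ and $\bC$, whereas $\cW^*_1$ certainly contains algebras of dimension $\ge 2$ — for example $\bC^2$, the matrix algebra $M_2(\bC)$, or $B(\cH)$ for an infinite-dimensional Hilbert space $\cH$. This is the desired contradiction, and it rules out local presentability of $\cW^*_1$.

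There is no real obstacle here, since all of the substantive analytic and categorical work has already been discharged in \Cref{th:nopres}; the corollary is a purely formal deduction. The one point meriting a moment's attention is that the ``every object is presentable'' principle genuinely applies to $\cW^*_1$: this rests on cocompleteness, which is supplied by \Cref{pr:wccmpl}, together with the (hypothetical) local presentability itself — and these are exactly the hypotheses under which \cite[\S 1.B]{ar} establishes the principle.
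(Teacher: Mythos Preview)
Your argument is correct and matches the paper's intent: the paper states the corollary as an ``obvious consequence'' of \Cref{th:nopres} with no further proof, and the principle you invoke (every object in a locally presentable category is presentable, \cite[Remark 1.30 (1)]{ar}) is exactly what the authors use later in \Cref{pr:grnpres}. One small redundancy: cocompleteness is already part of the definition of local presentability, so you need not invoke \Cref{pr:wccmpl} separately.
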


\pf{th:nopres}
\begin{th:nopres}
  We will argue that a von Neumann algebra $A$ of dimension $\ge 2$ admits, for arbitrarily large regular cardinals $\kappa$, a morphism
  \begin{equation}\label{eq:nfact}
    A\to\left(\text{$\kappa$-directed }\varinjlim_i A\right)\text{ not factoring through any }A_i.
  \end{equation}
  We will make a series of simplifications to this end.

  {\bf Step 1: reducing to Cartesian factors.} If $A$ admits a morphism \Cref{eq:nfact} so does $B\times A$, by simply applying the functor $B\times-$ (Cartesian product with $B$) to \Cref{eq:nfact}: indeed, that functor is easily seen to preserve directed colimits, so we obtain a morphism
  \begin{equation*}
    B\times A\to B\times \varinjlim A_i\cong \varinjlim (B\times A_i)
  \end{equation*}
  with the desired properties.

  {\bf Step 2: reducing to (certain) tensorands.} Suppose, once more, that we have a morphism \Cref{eq:nfact}, and consider the {\it maximal} (or {\it universal}) tensor product $B\widehat{\otimes }A$. This is the construction denoted by $B\overset{\mu}{\otimes} A$ after \cite[Proposition 8.2]{gch}, and is the universal codomain of two morphisms out of $A$ and $B$ with commuting ranges.

  The universality property of the `$\widehat{\otimes}$' construction makes it clear that the functor $B\widehat{\otimes}-$ preserves directed colimits, so we once again obtain a morphism
  \begin{equation}\label{eq:otimesnfact}
    B\widehat{\otimes} A\to B\widehat{\otimes} \varinjlim A_i\cong \varinjlim (B\widehat{\otimes} A_i).
  \end{equation}
  That it cannot factor through any of the individual $B\widehat{\otimes}A_i$ will follow, in the particular cases we consider:
  \begin{itemize}
  \item the connecting morphisms in the directed diagram producing $\varinjlim_i A_i$ will {\it injective} morphisms, as will \Cref{eq:nfact};
  \item $A$ will be {\it flat} (`plat' in \cite[\S 5]{gch}), in the sense that the canonical surjections
    \begin{equation*}
      B\widehat{\otimes} A\to B\overline{\otimes}A,\ B\in \cW^*_1
    \end{equation*}
    onto the {\it spatial} tensor product (the `$\overset{c}{\otimes}$' of \cite[discussion preceding Lemme 8.1]{gch}, `$\overline{\otimes}$' of \cite[\S III.1.5.4]{blk} or \cite[Definition IV.5.1]{tak1}, etc.) are all isomorphisms;
  \item whence applying the $\widehat{\otimes}$-to-$\overline{\otimes}$ surjection to \Cref{eq:otimesnfact} we obtain
    \begin{equation*}
      B\widehat{\otimes} A \cong B\overline{\otimes} A \subseteq B\overline{\otimes} (\varinjlim A_i),
    \end{equation*}
    using the fact that `$\overline{\otimes}$' produces injections when applied to injections \cite[Proposition 8.1]{gch}.
  \item We can then argue that $B\overline{\otimes}A$ is not contained in any $B\overline{\otimes}A_i$ by observing that \cite[Corollary IV.5.10]{tak1}
    \begin{equation*}
      (B\overline{\otimes}A)\cap (B\overline{\otimes}A_i) = B\overline{\otimes}(A\cap A_i)\subset B\overline{\otimes}A;
    \end{equation*}
  \item and the properness of the latter inclusion follows from that of $A\cap A_i\subset A$ by applying some functional (\cite[equation (1) following Definition IV.5.1]{tak1}) of the form $\varphi_B\overline{\otimes}\varphi_A$ where $\varphi_B\in B_*$ and $\varphi_A\in A_*$ vanishes on $A\cap A_i$ but not on $A$.
  \end{itemize}

  {\bf Step 3: reducing the problem to abelian von Neumann algebras and matrix algebras.} For an arbitrary von Neumann algebra $A$ we have its canonical direct-product decomposition \cite[Theorem V.1.19]{tak1}
  \begin{equation*}
    A\cong A_{I}\times A_{II_1}\times A_{II_{\infty}}\times A_{III}
  \end{equation*}
  into factors of {\it types} $I$, $II_1$, etc. Step 1 above then allows us to focus on the individual single-type Cartesian factors.

  Those of types $II$ or $III$ always decompose as $A\cong A'\otimes M_2$ \cite[Proposition V.1.35]{tak1} and $M_2$ is flat \cite[Proposition 8.6]{gch} (the term `discret' used there is synonymous to `type-$I$'), so Step 2 reduces the entire discussion to $M_2$.

  On the other hand, a type-I von Neumann algebra decomposes \cite[Theorem V.1.27]{tak1} as a product of the form
  \begin{equation*}
    \prod_{\alpha} \left(C_{\alpha}\otimes B(\cH_{\alpha})\right)
  \end{equation*}
  for various cardinals $\alpha$, where $C_{\alpha}$ is abelian and $\cH_{\alpha}$ is an $\alpha$-dimensional Hilbert space.

  If we have any non-vanishing factors for $\alpha\ge 2$ we can again peel off a matrix-algebra tensorand from $B(\cH_{\alpha})$ and proceed as before. On the other hand, the $\alpha=1$ factor reduces to an abelian von Neumann algebra.

  Having thus reduced the discussion to von Neumann algebras that are either
  \begin{itemize}
  \item abelian of dimension $\ge 2$;
  \item or matrix algebras $M_n$, $n\ge 2$,
  \end{itemize}
  we relegate those two cases to two separate results: \Cref{pr:ab} and \Cref{pr:mtrx} respectively.
\end{th:nopres}

\begin{remark}\label{re:sametens}
  A point of clarification is perhaps in order regarding the maximal tensor product denoted above by $\widehat{\otimes}$. As noted, it coincides with $\overset{\mu}{\otimes}$ of \cite[\S 8]{gch}; this might appear surprising at first, given that
  \begin{itemize}
  \item the morphisms in $\cW^*_1$ and $\cW^*$ are different;
  \item while the tensor products $\widehat{\otimes}$ and $\overset{\mu}{\otimes}$ supposedly have the same universality property, each phrased in the respective category: $A\bullet B$ (for either choice of `$\bullet$') is supposed to be the universal recipient of two mutually commuting morphisms from $A$ and $B$. 
  \end{itemize}
  The problem seems to be that the universality property of $\overset{\mu}{\otimes}$ is (or at least appears to us to be) misstated in \cite[Proposition 8.2]{gch}.

  Specifically, that universality property is stated as above, via couples of 
  commuting-image morphisms, but by construction (reverting to the notation in 
  \cite[\S 8]{gch}) the two morphisms $A_i\to A_1\overset{\mu}{\otimes}{A_2}$, 
  $i=1,2$ send the respective units $e_{A_i}$ to the same element 
  $g(e_{A_1}\otimes e_{A_2})$. It follows that the 
  $A_1\overset{\mu}{\otimes}A_2$ construction only classifies those pairs of 
  morphisms out of $A_i$, $i=1,2$ which
  \begin{itemize}
  \item have commuting images, as discussed previously;
  \item and {\it additionally}, send the units of $A_i$, $i=1,2$ onto the same projection of the codomain.
  \end{itemize}
  With this caveat, it is no longer surprising that the two tensor products coincide: the categories differ, but so does the (phrasing of the) universality property. 
\end{remark}

We need the following simple observation on realizing $\ell^{\infty}$ von Neumann algebras as sufficiently-directed colimits.

\begin{lemma}\label{le:discr-colim}
  Let $\kappa$ be a regular cardinal and denote by $\kappa_*$ the set $\kappa\sqcup\{*\}$ (i.e. $\kappa$ with an additional distinguished point). For each subset $S\subset\kappa$ of cardinality $<\kappa$, denote by
  \begin{equation*}
    C_S\subset \ell^{\infty}(\kappa_*)
  \end{equation*}
  the unital von Neumann subalgebra generated by the minimal projections associated to the elements of $S$. The canonical morphism
  \begin{equation}\label{eq:ellcolim}
    \varinjlim_S C_S\to \ell^{\infty}(\kappa_*),\quad\text{sets $S$ ordered by inclusion}
  \end{equation}
  is an isomorphism, thus realizing $\ell^{\infty}(\kappa_*)$ as a $\kappa$-directed colimit in $\cW^*_1$.
\end{lemma}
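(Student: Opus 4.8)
The statement concerns the canonical comparison map from a filtered colimit of subalgebras to $\ell^\infty(\kappa_*)$, and it asserts two things: that the map is an isomorphism, and that the indexing poset of "subsets of $\kappa$ of cardinality $<\kappa$, ordered by inclusion" is $\kappa$-directed. The second point is immediate from regularity of $\kappa$: the union of $<\kappa$ sets each of size $<\kappa$ again has size $<\kappa$, so any subfamily of size $<\kappa$ has an upper bound (its union), giving $\kappa$-directedness of the poset, hence a genuine $\kappa$-directed colimit in $\cW^*_1$ once we know colimits exist there (which they do by \Cref{pr:wccmpl}). So the real content is identifying the colimit.

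**Key steps.** First I would pin down what each $C_S$ is concretely. If $p_\alpha$ denotes the minimal projection at $\alpha\in\kappa_*$, then $C_S$ is the von Neumann algebra generated by $\{p_\alpha : \alpha\in S\}$; since these are mutually orthogonal projections summing (weakly) to a projection $q_S \le 1$, we get $C_S \cong \ell^\infty(S\sqcup\{\text{complement point}\})$, i.e. $C_S$ is spanned, as a weakly closed subspace, by the $p_\alpha$ ($\alpha\in S$) together with $1-q_S$. The connecting maps for $S\subseteq S'$ are the evident unital inclusions. Second, I would construct the inverse to \Cref{eq:ellcolim}, or rather verify the universal property directly: given a normal unital morphism $\phi_S: C_S\to D$ compatible with the inclusions, I must produce a unique normal unital $\ell^\infty(\kappa_*)\to D$ restricting to each $\phi_S$. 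For a single-point set $S=\{\alpha\}$ the map $\phi_{\{\alpha\}}$ picks out a projection $e_\alpha := \phi_{\{\alpha\}}(p_\alpha)\in D$; compatibility forces these $e_\alpha$ to be pairwise orthogonal (comparing on a two-element set), so $\sum_\alpha e_\alpha$ converges $\sigma$-weakly to a projection $e\le 1$ in $D$, and one defines $\Phi(f) := \sum_\alpha f(\alpha)e_\alpha + f(*)(1-e)$ for $f\in\ell^\infty(\kappa_*)$. Third, I would check: (i) $\Phi$ is a well-defined normal unital $*$-homomorphism — boundedness and multiplicativity are routine from orthogonality of the $e_\alpha$, and normality follows since $\Phi$ is $\sigma$-weakly continuous on the unit ball because bounded nets in $\ell^\infty(\kappa_*)$ converging $\sigma$-weakly do so coordinatewise; (ii) $\Phi$ restricts to $\phi_S$ on each $C_S$ — this is where one uses that $\phi_S(1-q_S)$ must equal $1 - \sum_{\alpha\in S}e_\alpha$ by unitality, matching $\Phi$ on the generator $1-q_S$ of $C_S$; and (iii) uniqueness, which holds because the $p_\alpha$ and the $1-q_S$ together generate $\ell^\infty(\kappa_*)$ as a von Neumann algebra (indeed the span of $\{p_\alpha\}\cup\{1\}$ is $\sigma$-weakly dense), so any two normal morphisms agreeing on all $C_S$ agree. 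Finally, $\Phi\circ(\text{canonical map}) = \mathrm{id}$ and the canonical map is epi in the appropriate sense, giving that \Cref{eq:ellcolim} is an isomorphism.

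**Main obstacle.** The only genuinely delicate point is normality (i.e. $\sigma$-weak continuity) of the constructed map $\Phi$, together with the assertion that $\sum_\alpha e_\alpha$ makes sense: one needs that an orthogonal family of projections in a von Neumann algebra $D$ has a $\sigma$-weak sum, and that $\Phi$ defined via this sum is $\sigma$-weakly continuous. This is standard von Neumann algebra technology — the family $\{e_\alpha\}$ has supremum $e$, partial sums increase to $e$ strongly, and on bounded sets $\sigma$-weak convergence in $\ell^\infty(\kappa_*) = (\ell^1(\kappa_*))^*$ is coordinatewise convergence against $\ell^1$-functionals — but it is the step that requires genuine care rather than formal nonsense. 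A secondary subtlety worth a sentence: one should confirm that the colimit in $\cW^*_1$ really is computed by this universal property, i.e. that the construction of filtered colimits sketched in \Cref{se:wast} (following \cite{gch}) produces exactly the algebra with this mapping-out property; but since that is precisely how colimits in $\cW^*_1$ are characterized, checking the universal property as above suffices.
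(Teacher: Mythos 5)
Your proposal is correct and follows essentially the same route as the paper: both arguments boil down to identifying morphisms out of either side of \Cref{eq:ellcolim} with $\kappa$-indexed families of mutually orthogonal projections (the paper phrases this via the represented functors $\mathrm{hom}(C_S,-)$ and their limit, you via a direct verification of the colimit's universal property, constructing $\Phi(f)=\sum_\alpha f(\alpha)e_\alpha+f(*)(1-e)$ and checking normality and uniqueness). The extra care you take with $\sigma$-weak convergence of $\sum_\alpha e_\alpha$ and with density of $\bigcup_S C_S$ is exactly the content the paper leaves implicit, so no gap.
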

\begin{proof}
  That the colimit is $\kappa$-directed follows from the regularity of $\kappa$: a union of fewer than $\kappa$ sets of cardinality $<\kappa$ again has cardinality $<\kappa$.

  As for \Cref{eq:ellcolim} being an isomorphism, we can check this by examining the functors $\cW^*_1\to \cat{Set}$ represented by the two objects. First, note that
  \begin{equation*}
    \mathrm{hom}(C_S,-):\cW^*_1\to \cat{Set}
  \end{equation*}
  is the functor that picks out a projection (the image of the characteristic function $\chi_S\in C_S$) and a partition of that projection into an $S$-indexed set of mutually-orthogonal projections. Passing to the limit,
  \begin{equation*}
    \varprojlim_S \mathrm{hom}(C_S,-) \cong \mathrm{hom}\left(\varinjlim_S C_S,\ -\right)
  \end{equation*}
  is the functor that picks out a projection $p$ (in whatever von Neumann algebra the functor is being applied to) and decomposes it as a $\kappa$-indexed partition of mutually-orthogonal projections. But this is also the functor represented by $\ell^{\infty}(\kappa_*)$: the unit of the non-unital von Neumann subalgebra
  \begin{equation*}
    \ell^{\infty}(\kappa)\subset \ell^{\infty}(\kappa_*)
  \end{equation*}
  gets mapped to $p$, and the characteristic function $\chi_{\{*\}}$ of the leftover singleton $\{*\}=\kappa_*\setminus \kappa$ gets mapped to $1-p$.
\end{proof}

\begin{remark}
  In particular, \Cref{le:discr-colim} gives an example (or rather a family of examples) of arbitrarily-directed families
  \begin{equation*}
    A_S\subset A
  \end{equation*}
  of von Neumann subalgebras for which $\varinjlim_S A_S\to A$ is not one-to-one: simply take $A=\ell^{\infty}(\kappa)$ (not $\kappa_*$!) and $A_S$ to be the unital von Neumann subalgebra generated by the minimal projections attached to elements of $S\subset \kappa$ for $|S|<\kappa$. \Cref{le:discr-colim} implies that
  \begin{equation}\label{eq:limasa}
    \varinjlim_S A_S\to A
  \end{equation}
  can be identified with the first-factor projection
  \begin{equation*}
    \ell^{\infty}(\kappa_*)\cong A\times \bC\to A\cong \ell^{\infty}(\kappa). 
  \end{equation*}  
  
  For \Cref{le:discr-colim} to go through, it is crucial that we work with the category of $W^*$-algebras with {\it unital} morphisms. By contrast, computing the colimit in the category studied in \cite{gch}, of $W^*$-algebras with normal, possibly non-unital morphisms, the more ``reasonable'' morphism
  \begin{equation*}
    \varinjlim_S C_S\to \ell^{\infty}(\kappa)
  \end{equation*}
  is an isomorphism: both objects, in that case, represent the functor
  \begin{equation*}
    \left(\text{$W^*$-algebra A}\right)\mapsto \left(\text{$\kappa$-partitioned projections in $A$}\right)\in\cat{Set}.
  \end{equation*}
  For the non-unital $W^*$ category \cite[examples following Remarque 7.2]{gch} similarly illustrate the phenomenon of morphisms \Cref{eq:limasa} out of directed colimits failing to be one-to-one despite $A_S\subseteq A$ being so.
\end{remark}

\begin{proposition}\label{pr:ab}
  An abelian von Neumann algebra of dimension $\ge 2$ is not $\kappa$-presentable in $\cW^*_1$ for any regular cardinal $\kappa$.
\end{proposition}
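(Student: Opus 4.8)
The plan is to reduce everything, via Steps 1 and 2 of the proof of \Cref{th:nopres}, to the single case $A=\bC^{2}$, and then to defeat $\kappa$-presentability of $\bC^{2}$ directly with \Cref{le:discr-colim}.

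\emph{The reduction.} Recall that Step 1 shows that if $B$ admits a morphism \Cref{eq:nfact} (into a $\kappa$-directed colimit and factoring through no stage) then so does $C\times B$ for any $C$, and Step 2 shows that if $B$ is flat and admits such a morphism with injective connecting maps then so does $C\,\widehat\otimes\,B$ for any $C$. Since abelian von Neumann algebras are of type $I$ they are flat (\cite[Proposition 8.6]{gch}), and for a flat $B$ one has $C\,\widehat\otimes\,B\cong C\,\overline\otimes\,B$. Now write $A=A_{\mathrm{at}}\times A_{\mathrm{diff}}$ as the product of its atomic and diffuse parts. If $A_{\mathrm{at}}$ has at least two minimal projections, then $\bC^{2}$ is visibly a Cartesian factor of $A$ and Step 1 reduces us to $\bC^{2}$. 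Otherwise $A_{\mathrm{diff}}\neq 0$ (else $\dim A\le 1$); by Maharam's classification of abelian von Neumann algebras, $A_{\mathrm{diff}}$ has a nonzero projection $z$ with $A_{\mathrm{diff}}|_{z}$ homogeneous, say $A_{\mathrm{diff}}|_{z}\cong L^{\infty}(\{0,1\}^{\lambda})$ for an infinite cardinal $\lambda$. Splitting off one coordinate and using $\lambda\sqcup\{*\}\cong\lambda$ gives $A_{\mathrm{diff}}|_{z}\cong \bC^{2}\,\overline\otimes\,A_{\mathrm{diff}}|_{z}$, so $\bC^{2}$ occurs as a flat tensorand of the Cartesian factor $A_{\mathrm{diff}}|_{z}$ of $A$; Steps 2 and 1 again reduce us to $\bC^{2}$.

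\emph{The case $\bC^{2}$.} By \Cref{le:discr-colim}, $\ell^{\infty}(\kappa_{*})=\varinjlim_{S}C_{S}$ is a $\kappa$-directed colimit in $\cW^{*}_{1}$. Fix a partition $\kappa_{*}=P_{1}\sqcup P_{2}$ with $|P_{1}|=|P_{2}|=\kappa$, and let $\beta\colon\bC^{2}\to\ell^{\infty}(\kappa_{*})$ be the unique unital normal morphism sending the nontrivial idempotent of $\bC^{2}$ to $\chi_{P_{1}}$. A factorization of $\beta$ through a cocone map $C_{S}\to\ell^{\infty}(\kappa_{*})$ would force $\chi_{P_{1}}\in C_{S}$, i.e. $\chi_{P_{1}}$ constant on $\kappa_{*}\setminus S$. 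But $\kappa$ is regular and $|S|<\kappa$, so $|\kappa_{*}\setminus S|=\kappa$ and $\kappa_{*}\setminus S$ meets each of $P_{1},P_{2}$; hence $\chi_{P_{1}}\notin C_{S}$ for every $S$. Therefore $\mathrm{hom}_{\cW^{*}_{1}}(\bC^{2},-)$ fails to preserve this $\kappa$-directed colimit, so $\bC^{2}$ is not $\kappa$-presentable; since $\kappa$ was an arbitrary regular cardinal and by the reduction the same conclusion transfers to $A$, we are done.

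\emph{Main obstacle.} The delicate point is the structural reduction for the diffuse part: isolating a homogeneous corner of $A_{\mathrm{diff}}$ and peeling off a $\bC^{2}$-tensorand rests on (a portion of) Maharam's theorem, and one must check that the flatness hypothesis of Step 2 genuinely applies to the abelian tensorand $\bC^{2}$ and that the connecting maps $C_{S}\hookrightarrow C_{S'}$ of \Cref{le:discr-colim} are injective — both of which hold. Everything after the reduction is immediate: the $\bC^{2}$ computation is a direct consequence of \Cref{le:discr-colim}.
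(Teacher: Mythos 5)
Your proposal is correct and follows essentially the same route as the paper's own proof: reduce to $\bC^2$ via Steps 1--2 of the proof of \Cref{th:nopres} (splitting off the atomic part and peeling a $\bC^2$ tensorand off the diffuse part via Maharam-type structure theory, with $\bC^2$ flat), then defeat $\kappa$-presentability of $\bC^2$ using \Cref{le:discr-colim} by sending a minimal projection to a characteristic function that lies in no $C_S$. The only deviations are cosmetic: the paper uses $\chi_\kappa$ rather than a $\chi_{P_1}$ for a two-piece partition of size $\kappa$, and handles the purely atomic case via a surjection $\ell^\infty(S)\twoheadrightarrow\bC^2$ where you observe $\bC^2$ is a Cartesian factor.
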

\begin{proof}
  Isolating out the minimal projections, an abelian von Neumann algebra $A_1$ decomposes as $A_1\times A_2$ with $A_1\cong \ell^{\infty}(S)$ for some set $S$ and $A_2$ {\it non-atomic}, in the sense that it has no minimal non-zero projections (the term is measure-theoretic \cite[\S 40]{hlm-mt}; {\it diffuse} is a synonym: \cite[\S III.4.8.8]{blk}, \cite[\S 7.11.16]{ped-aut}, \cite[\S 1]{oz-sld}, etc.).

  The diffuse factor $A_2$ is of the form $L^{\infty}(X,\mu)$ for a non-atomic \cite[\S 40]{hlm-mt} positive measure $\mu$ on a locally compact Hausdorff space $X$ \cite[Part I, \S 7.3, Theorem 1]{dixw}. By general measure-space structure theory (\cite[Theorems 1 and 2]{mhrm} or \cite[Theorems 2 and 3]{znk}) we can decompose $A_2$ as a product of copies of $L^{\infty}(\bR,\mu_{\mathrm{Lebesgue}})$, and in turn the latter decomposes as 
  \begin{equation*}
    L^{\infty}(\bR,\mu)\cong L^{\infty}(\bR,\mu)^2\cong L^{\infty}(\bR,\mu)\overline{\otimes} \bC^2. 
  \end{equation*}
  By the reduction steps in the proof of \Cref{th:nopres}, the problem then boils down to $\bC^2$.

  All of this is available provided the diffuse factor $A_2$ doesn't vanish. If it does, $A_1$ is $\ell^{\infty}(S)$ for $|S|\ge 2$, it surjects onto $\bC^2$, and again the problem reduces to the latter provided we map it into a directed colimit of embeddings.

  All in all, we can now focus on $\bC^2$. To that end, simply note the embedding $\bC^2\subset \ell^{\infty}(\kappa_*)$ (notation as in \Cref{le:discr-colim}) that sends one of the two minimal projections, say $p\in \bC^2$, to the characteristic function
  \begin{equation*}
    \chi_{\kappa}\in \ell^{\infty}(\kappa_*) = \ell^{\infty}(\kappa\sqcup\{*\}). 
  \end{equation*}
  This is an embedding into a $\kappa$-directed colimit by \Cref{le:discr-colim}, but cannot factor through any of the $C_S\subset \ell^{\infty}(\kappa_*)$, $|S|<\kappa$ because none $C^*$-subalgebras of those contain $\chi_{\kappa}$.
\end{proof}

\begin{proposition}\label{pr:mtrx}
  A matrix algebra $M_n=M_n(\bC)$, $n\ge 2$ is not $\kappa$-presentable in $\cW^*_1$ for any regular cardinal $\kappa$.
\end{proposition}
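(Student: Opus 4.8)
The plan is to follow the strategy of \Cref{pr:ab}. Fix a regular cardinal $\kappa$, which we may take to be infinite; I will produce a unital normal embedding $\phi$ of $M_n$ into a $\kappa$-directed colimit in $\cW^*_1$ of embeddings, such that $\phi$ factors through no term of the colimit. This says precisely that $M_n$ is not $\kappa$-presentable, and as $\kappa$ is arbitrary this is the proposition; in particular $M_n$ is not presentable.

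First I would build the ambient colimit by tensoring up the one of \Cref{le:discr-colim}. There $\ell^\infty(\kappa_*)=\varinjlim_S C_S$ is realized as a $\kappa$-directed colimit of embeddings, indexed by the subsets $S\subset\kappa$ with $|S|<\kappa$ ordered by inclusion; explicitly $C_S$ consists of the functions in $\ell^\infty(\kappa_*)$ that are constant on $\kappa_*\setminus S$. Now apply the functor $M_n\widehat{\otimes}-$. Since $M_n$ is flat, being of type $I$ (\cite[Proposition 8.6]{gch}), this functor coincides with $M_n\overline{\otimes}-=M_n(-)$; it preserves directed colimits by the universal property of the maximal tensor product (as invoked in Step~2 of the proof of \Cref{th:nopres}), and it sends embeddings to embeddings because $\overline{\otimes}$ does (\cite[Proposition 8.1]{gch}). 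Hence
\[
  M_n\bigl(\ell^\infty(\kappa_*)\bigr)\;\cong\;\varinjlim_S M_n(C_S)
\]
is a $\kappa$-directed colimit of embeddings in $\cW^*_1$, each $M_n(C_S)$ being the $W^*$-subalgebra of those $M_n$-valued functions on $\kappa_*$ that are constant on $\kappa_*\setminus S$.

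Next I would write down the bad morphism. Fix a non-central unitary $w\in M_n$, and — using that $\kappa$ is infinite — a map $\varepsilon\colon\kappa_*\to\{0,1\}$ both of whose fibres meet $\kappa$ in a set of cardinality $\kappa$. Let $u\in M_n(\ell^\infty(\kappa_*))$ be the unitary with $u(\alpha):=w^{\varepsilon(\alpha)}$, and define $\phi\colon M_n\to M_n(\ell^\infty(\kappa_*))$ by $\phi(b):=u\,(b\otimes 1)\,u^{*}$, i.e.\ conjugation of the constant copy $M_n\otimes 1$ by $u$. As the composite of the unital normal inclusion $M_n\hookrightarrow M_n(\ell^\infty(\kappa_*))$ with an inner automorphism, $\phi$ is a $\cW^*_1$-morphism, and it is injective because $M_n$ is simple. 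Suppose $\phi$ factored through some $M_n(C_S)$, i.e.\ $\phi(b)\in M_n(C_S)$ for all $b$. Choose $b$ with $wbw^{*}\neq b$, possible since $w$ is non-central. Then $\phi(b)$ is the function $\alpha\mapsto w^{\varepsilon(\alpha)}bw^{-\varepsilon(\alpha)}$, taking the value $b$ on $\varepsilon^{-1}(0)$ and $wbw^{*}$ on $\varepsilon^{-1}(1)$; since both $\varepsilon^{-1}(0)\cap\kappa$ and $\varepsilon^{-1}(1)\cap\kappa$ have cardinality $\kappa>|S|$, they meet $\kappa\setminus S$, so $\phi(b)$ is not constant on $\kappa_*\setminus S$, contradicting $\phi(b)\in M_n(C_S)$. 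Hence $\phi$ factors through no $M_n(C_S)$, so $\mathrm{hom}_{\cW^*_1}(M_n,-)$ does not preserve the displayed $\kappa$-directed colimit, and $M_n$ is not $\kappa$-presentable.

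I do not anticipate a real obstacle: the only delicate point is the bookkeeping in the second paragraph — that $M_n(\ell^\infty(\kappa_*))=\varinjlim_S M_n(C_S)$ really is a colimit in $\cW^*_1$ with embeddings as its structure maps — and this is exactly where flatness of $M_n$ and the formal properties of $\overline{\otimes}$ and $\widehat{\otimes}$ from \cite{gch} come in, in the same spirit as the reduction steps in the proof of \Cref{th:nopres}. All the rest is a pointwise computation inside the finite-dimensional algebra $M_n$.
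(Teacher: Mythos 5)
Your argument is correct, but it follows a genuinely different route from the paper's proof of \Cref{pr:mtrx}. The paper works inside $B(\cH)$ for a $\kappa$-dimensional Hilbert space $\cH$: it forms the $\kappa$-directed family of subalgebras $B_S=B(\cH_S)\oplus M_{2,S^c}$, notes that the induced map $B:=\varinjlim_S B_S\to B(\cH)$ is merely \emph{onto}, and then lifts a morphism $M_2\to B(\cH)$ (sending a minimal projection to $p_{T,0}$ with $|T|=|T^c|=\kappa$) back along that surjection, using the $2$-homogeneity of the $B_S$ and the fact that normal surjections are compressions by central projections; the lifted morphism factors through no $B_S$ because $p_{T,0}$ lies in none of them. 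You instead recycle \Cref{le:discr-colim}: tensoring it with $M_n$ -- via flatness of the type-$I$ algebra $M_n$ \cite[Proposition 8.6]{gch} and preservation of directed colimits by $M_n\widehat{\otimes}-$, both already invoked in Steps 2--3 of the proof of \Cref{th:nopres} -- exhibits $M_n\overline{\otimes}\,\ell^\infty(\kappa_*)$ itself as a $\kappa$-directed colimit of the subalgebras $M_n\overline{\otimes}\,C_S$, and your non-factoring morphism is the constant embedding of $M_n$ twisted by the step-function unitary $u(\alpha)=w^{\varepsilon(\alpha)}$: since $\phi(b)$ takes two distinct values, each on a subset of $\kappa$ of cardinality $\kappa$, it cannot be constant off any $S$ with $|S|<\kappa$, hence lies in no $M_n(C_S)$. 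Your approach buys a uniform treatment of all $n\ge 2$ (the paper specializes to $n=2$ to fix ideas), a colimit that is literally attained so that no lifting, homogeneity, or central-projection analysis of kernels is needed, and in fact an \emph{embedding} that fails to factor; the price is leaning on the $\widehat{\otimes}$/$\overline{\otimes}$ formalism of \cite{gch} for the identification $\varinjlim_S M_n(C_S)\cong M_n(\ell^\infty(\kappa_*))$, which is the one step requiring care, but it is exactly the machinery the paper's own reduction steps already use (and, since $M_n$ is finite-dimensional, it could even be checked by hand by the representability argument of \Cref{le:discr-colim}, morphisms out of $M_n\otimes C$ being just commuting pairs). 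Both proofs are valid; yours is arguably the more economical one given that \Cref{le:discr-colim} is already in place.
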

\begin{proof}
  We specialize to $n=2$, in order to fix ideas and ease the notational load of the proof; the general argument does not pose substantial additional difficulties.

  Consider a $\kappa$-dimensional Hilbert space $\cH$, with a fixed orthonormal basis
  \begin{equation*}
    e_{s,i}\in \cH,\ s\in \kappa,\ i=0,1.
  \end{equation*}
  The extra $0,1$ indices are there to aid in working with our choice of $n=2$; in general the index $i$ would run from $0$ to $n-1$. Fix, for each $s\in\kappa$, a partial isometry $u_s$ that implements an equivalence between the projections $p_{s,i}$ onto $\bC e_{s,i}$, $i=0,1$:
  \begin{equation*}
    u_s^*u_s = p_{s,0},\quad u_s u_s^* = p_{s,1}.
  \end{equation*}
  For a subset $S\subset \kappa$ of strictly smaller cardinality write
  \begin{itemize}
  \item $\cH_S$ for the closed span of $\{e_{s,i}\ |\ s\in S,\ i=0,1\}$;
  \item $M_{2,S^c}$ (the `c' superscripts stands for `complement') for the copy of $M_2$ spanned by
    \begin{equation*}
      \sum_{s\not\in S}p_{s,i},\ i=0,1,\quad \sum_{s\not\in S}u_s\quad\text{and}\quad \sum_{s\not\in S}u_s^*
    \end{equation*}
    (this is a non-unital $W^*$-subalgebra of $B(\cH)$);
  \item and $B_S$ for the direct sum
    \begin{equation*}
      B_S:=B(\cH_S)\oplus M_{2,S^c}. 
    \end{equation*}
  \end{itemize}
  A number of observations are immediate:
  \begin{enumerate}[(1)]
  \item The map $S\mapsto B_S$ is monotone for inclusion, and makes $B_S\subset B(\cH)$ into a $\kappa$-directed family of von Neumann subalgebras.
  \item Every $B_S$ is {\it 2-homogeneous} (a slight extension of the language used in \cite[\S 5.5.6]{ped-aut}, for instance): there is a projection $p\in B_S$ equivalent to $1-p$ in the sense that
    \begin{equation*}
      u^*u = p,\ uu^*=1-p
    \end{equation*}
    for some $u\in B_S$ or, equivalently, there is a unital $W^*$-morphism $M_2\to B_S$.
  \item The $B_S$ jointly generate $B(\cH)$ as a von Neumann algebra, i.e.
    \begin{equation}\label{eq:surjbh}
      B:=\varinjlim_S B_S\to B(\cH)
    \end{equation}
    is onto.
  \end{enumerate}
  Consider, now, a morphism $M_2\to B(\cH)$ that sends the projection
  \begin{equation*}
    p:=
    \begin{pmatrix}
      1&0\\
      0&0
    \end{pmatrix}
  \end{equation*}
  to, say, $p_{T,0}:=\sum_{s\in T}p_{s,0}$ for some subset
  \begin{equation*}
    T\subset\kappa,\quad |T| = \kappa = |T^c|. 
  \end{equation*}
  The surjection \Cref{eq:surjbh} is, like all $W^*$-algebra surjections \cite[Part I, \S 3.4, Corollary 3]{dixw}, identifiable with
  \begin{equation*}
    B\ni b\mapsto bz\in Bz
  \end{equation*}
  for some central projection $z\in B$. The 2-homogeneity of the $B_S$ implies that of $B(1-z)$, so the morphism $M_2\to B(\cH)$ lifts along \Cref{eq:surjbh} to
  \begin{equation}\label{eq:m2b}
    M_2\to B\cong Bz\times B(1-z)\to Bz\cong B(\cH)
  \end{equation}
  by preserving the $Bz$ component of the morphism we set out with and supplementing it with an arbitrary $M_2\to B(1-z)$.
  
  Because $p_{T,0}$ is not contained in any of the $B_S\subset B(\cH)$, \Cref{eq:m2b} cannot factor through any $B_S\subset \varinjlim_S B_S=B$. We thus have a morphism into a $\kappa$-directed colimit which doesn't factor through any of the colimit constituents: precisely what we sought. 
\end{proof}

\subsection{Additional structure: graded $W^*$-algebras}\label{subse:grd}

To return to the motivating discussion, we recall briefly the framework relevant to \cite{pvl-lp}. That paper works extensively with what it calls {\it {\bf I}-graded von Neumann algebras}, where
\begin{itemize}
\item {\bf I} is the imaginary line $(i\bR,+)$ regarded as a locally compact abelian group;
\item and being {\bf I}-graded is a specialization of the following notion (\cite[Definition 3.1]{pvl-lp}).
\end{itemize}

\begin{definition}\label{def:grd}
  For a locally compact abelian group $\bG$ (always Hausdorff, for us), a {\it $\bG$-graded von Neumann algebra} is a von Neumann algebra $M$ equipped with an action by the {\it Pontryagin dual}
  \begin{equation*}
    \widehat{\bG} := \{\text{continuous group morphisms }\bG\to \bS^1\}
  \end{equation*}
  by $W^*$-algebra automorphisms, such that for every $m\in M$ the map
  \begin{equation*}
    \bG\ni g\mapsto gm\in M
  \end{equation*}
  is continuous from $\bG$ with its locally-compact topology to $M$ equipped with its weak$^*$ topology.

  We write $\cW^*_{1,\bG}$ for the category of $\bG$-graded von Neumann algebras, with morphisms being those in $\cW^*_1$ which intertwine the $\widehat{\bG}$-actions.
\end{definition}

We remind the reader that
\begin{itemize}
\item $\bG\mapsto \widehat{\bG}$ is a {\it duality} on the category of locally compact (Hausdorff) abelian groups, i.e. an involutive contravariant equivalence (as follows, for instance, from \cite[Theorems 7.7 and 7.63]{hm}).  
\item The continuity requirement for the $\widehat{\bG}$-action on $M$ imposed in \Cref{def:grd} is the standard one in the theory of locally-compact-group actions on von Neumann algebras: see e.g. \cite[\S X.1, Definition 1.1 and Proposition 1.2]{tak2}.
\end{itemize}

We can now directly address the claim made in passing in \cite[paragraph following Definition 3.4]{pvl-lp} that the category of {\bf I}-graded von Neumann algebras is locally presentable. The following result, generalizing \Cref{cor:npres}, shows that this cannot be the case.

\begin{proposition}\label{pr:grnpres}
  The category $\cW^*_{1,\bG}$ of $\bG$-graded von Neumann algebras is not locally presentable for any locally compact abelian group $\bG$.
\end{proposition}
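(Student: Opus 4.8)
The plan is to reduce the graded case to the ungraded \Cref{cor:npres} by exhibiting, inside $\cW^*_{1,\bG}$, a $\widehat{\bG}$-equivariant version of the non-factoring morphism constructed for $\cW^*_1$. The simplest route is to equip the algebras appearing in \Cref{th:nopres} with the \emph{trivial} $\widehat{\bG}$-action (every automorphism is the identity), which is manifestly a continuous action in the sense of \Cref{def:grd}. This gives a fully faithful functor $\cW^*_1\to \cW^*_{1,\bG}$, $M\mapsto (M,\text{trivial action})$, whose essential image is closed under the constructions used in the proof of \Cref{th:nopres} (finite products, directed colimits, the tensor products $\widehat{\otimes}$ and $\overline{\otimes}$ — since the trivial action on a tensor product restricts to the trivial actions on the tensorands). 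Concretely, take $A=\bC^2$ with trivial $\widehat{\bG}$-action; the embedding $\bC^2\hookrightarrow \ell^\infty(\kappa_*)$ of \Cref{pr:ab}, again with trivial actions throughout, realizes $A$ as a morphism into a $\kappa$-directed colimit in $\cW^*_{1,\bG}$ which does not factor through any colimit constituent. Hence $\bC^2$ with trivial grading fails to be $\kappa$-presentable in $\cW^*_{1,\bG}$ for every regular $\kappa$, so that category has a non-presentable object and is therefore not locally presentable.

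The key steps, in order, are: (i) verify that the trivial action defines a $\bG$-graded von Neumann algebra and that $M\mapsto (M,\mathrm{triv})$ is a functor $\cW^*_1\to\cW^*_{1,\bG}$ preserving directed colimits — this is immediate, since the forgetful functor $\cW^*_{1,\bG}\to\cW^*_1$ creates directed colimits (the $\widehat{\bG}$-action on a colimit is assembled from the actions on the terms, and the continuity condition is preserved because it can be checked on a generating set, here the images of the terms); (ii) observe that the diagram $S\mapsto C_S$ of \Cref{le:discr-colim}, carrying trivial actions, is a $\kappa$-directed diagram in $\cW^*_{1,\bG}$ with colimit $\ell^\infty(\kappa_*)$ with trivial action, because the colimit computed in $\cW^*_1$ already carries the only $\widehat{\bG}$-action compatible with the cocone; (iii) conclude that the equivariant embedding $\bC^2\hookrightarrow \ell^\infty(\kappa_*)$ does not factor through any $C_S$ in $\cW^*_{1,\bG}$, because it does not even do so in $\cW^*_1$ after applying the forgetful functor; (iv) invoke that an object with a morphism of the form \Cref{eq:nfact} into an arbitrarily-$\kappa$-directed colimit cannot be presentable, exactly as in \Cref{cor:npres}.

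The only genuinely delicate point is step (i)–(ii): one must be sure that the forgetful functor $\cW^*_{1,\bG}\to\cW^*_1$ really does create the relevant directed colimits, i.e. that the colimit von Neumann algebra $\varinjlim_S C_S=\ell^\infty(\kappa_*)$ admits a (necessarily unique) continuous $\widehat{\bG}$-action making all structure maps equivariant, and that this action is the trivial one. Uniqueness is forced by the universal property once we demand equivariance and the $C_S$ carry trivial actions; continuity in the sense of \Cref{def:grd} holds because the orbit map $\bG\ni g\mapsto gm$ is constant for every $m$. So there is in fact no obstacle beyond this bookkeeping, and the proof is genuinely a one-line reduction. (If one preferred to avoid even discussing creation of colimits, one could instead note directly that the trivial-action diagram $\{C_S\}$ has colimit in $\cW^*_{1,\bG}$ computed as in \Cref{le:discr-colim} with trivial action, since that object together with the evident equivariant cocone satisfies the universal property in $\cW^*_{1,\bG}$ — any equivariant cocone under $\{C_S\}$ has equivariant, hence $\widehat{\bG}$-fixed-on-the-image, comparison map, which is automatic since the actions on the $C_S$ are trivial.)

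\begin{proof}
  Equip every von Neumann algebra appearing in the proofs of \Cref{th:nopres}, \Cref{pr:ab} and \Cref{le:discr-colim} with the trivial $\widehat{\bG}$-action, under which every $W^*$-automorphism in sight is the identity; the orbit maps $g\mapsto gm$ are then constant, so the continuity requirement of \Cref{def:grd} is vacuously satisfied, and every $C^*$/$W^*$-morphism among these algebras is automatically $\widehat{\bG}$-equivariant. Thus $M\mapsto (M,\text{trivial action})$ defines a functor $\cW^*_1\to\cW^*_{1,\bG}$.

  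We claim this functor sends the $\kappa$-directed colimit presentation of \Cref{le:discr-colim} to a $\kappa$-directed colimit presentation in $\cW^*_{1,\bG}$. Indeed, the forgetful functor $U\colon\cW^*_{1,\bG}\to\cW^*_1$ creates directed colimits: given a directed diagram in $\cW^*_{1,\bG}$ whose underlying diagram has colimit $L$ in $\cW^*_1$, each $\gamma\in\widehat{\bG}$ acts on the diagram by automorphisms, hence induces an automorphism of $L$ compatible with the cocone; this gives a $\widehat{\bG}$-action on $L$, and its continuity in the sense of \Cref{def:grd} is checked on the $\sigma$-weakly dense union of the images of the diagram terms, where it is inherited from the continuity of the actions on those terms. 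Uniqueness of this lift is forced by the universal property. Applying this with the trivial-action diagram $\{C_S\}_{|S|<\kappa}$ of \Cref{le:discr-colim}: its colimit in $\cW^*_1$ is $\ell^\infty(\kappa_*)$, so its colimit in $\cW^*_{1,\bG}$ is $\ell^\infty(\kappa_*)$ with the (unique, trivial) $\widehat{\bG}$-action, realized as a $\kappa$-directed colimit.

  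Now take $A=\bC^2$ with trivial $\widehat{\bG}$-action and the equivariant embedding $\bC^2\hookrightarrow\ell^\infty(\kappa_*)$ of \Cref{pr:ab} sending one minimal projection to $\chi_\kappa$. As shown there, this morphism does not factor through any $C_S\subset\ell^\infty(\kappa_*)$ with $|S|<\kappa$, and a fortiori it does not do so in $\cW^*_{1,\bG}$. Hence $(\bC^2,\text{trivial action})\in\cW^*_{1,\bG}$ admits, for every regular cardinal $\kappa$, a morphism into a $\kappa$-directed colimit not factoring through any constituent, so it is not $\kappa$-presentable for any $\kappa$. Consequently $\cW^*_{1,\bG}$ possesses a non-presentable object and is therefore not locally presentable.
\end{proof}
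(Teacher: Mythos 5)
Your proof is correct and follows the same basic reduction as the paper: equip everything with the trivial $\widehat{\bG}$-action and import the ungraded non-presentability. The one place you diverge is in justifying that the $\kappa$-directed colimit presentation of \Cref{le:discr-colim} survives in $\cW^*_{1,\bG}$. The paper does this once and for all by exhibiting the $\widehat{\bG}$-invariants functor $M\mapsto M^{\widehat{\bG}}$ as a right adjoint to the trivial-grading functor $F:\cW^*_1\to\cW^*_{1,\bG}$, so that $F$ is cocontinuous and any witnessing morphism from \Cref{th:nopres} transfers verbatim; you instead assert that the forgetful functor $\cW^*_{1,\bG}\to\cW^*_1$ creates directed colimits. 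That general claim is your one soft spot: point-$\sigma$-weak continuity of the induced $\widehat{\bG}$-action on the colimit is not obviously inherited from continuity on the $\sigma$-weakly dense union of the images of the terms ($\sigma$-weak density of elements with continuous orbit maps does not by itself control the orbit maps of arbitrary elements, nor does it give norm-density of the continuous part of the predual). Fortunately you do not need the general statement: your parenthetical direct verification --- that $\ell^\infty(\kappa_*)$ with the trivial action satisfies the universal property in $\cW^*_{1,\bG}$, since uniqueness of the comparison map in $\cW^*_1$ forces it to be equivariant when the $C_S$ carry trivial actions, and continuity is vacuous for the trivial action --- is complete and is all the argument requires for the single example $\bC^2$. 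The paper's adjoint-functor route buys a cleaner, colimit-shape-independent transfer (and shows at once that every trivially graded algebra of dimension $\ge 2$ is non-presentable), while your direct check is more self-contained but should be stated for the trivial-action diagram only, not as a general creation result.
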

\begin{proof}
  We can regard any von Neumann algebra as carrying the {\it trivial} $\bG$-grading, i.e. the trivial $\widehat{\bG}$-action; this gives a (fully faithful) functor $F:\cW^*_1\to \cW^*_{1,\bG}$.

  On the other hand, for every $\gamma\in \bG$ and $M\in \cW^*_{1,\bG}$ equipped with a $\widehat{\bG}$-action
  \begin{equation*}
    \triangleright:\widehat{\bG}\times M\to M,
  \end{equation*}
  we can consider the {\it degree-$\gamma$ component} of $M$, defined by
  \begin{equation*}
    M_{\gamma}:=\{m\in M\ |\ p\triangleright m = \gamma(p)m,\ \forall p\in \widehat{\bG}\}.
  \end{equation*}
  In particular, selecting the {\it $\widehat{\bG}$-invariants}
  \begin{equation*}
    M^{\widehat{\bG}}:=M_1
    = \{m\in M\ |\ p\triangleright m = m,\ \forall p\in \widehat{\bG}\}
    \subseteq M
  \end{equation*}
  gives a functor
  \begin{equation*}
    \cW^*_{1,\bG}\ni M\stackrel{G}{\longmapsto} M^{\widehat{\bG}}\in M
  \end{equation*}
  that is right adjoint to $F$.

  Since $F$ is a left adjoint, it is cocontinuous. This means that any morphism
  \begin{equation*}
    M\to \varinjlim_i M_i
  \end{equation*}
  in $\cW^*_1$ witnessing that $M$ is {\it not} $\kappa$-presentable (for some regular cardinal $\kappa$) can be regarded as such a diagram in $\cW^*_{1,\bG}$ instead.

  It follows from \Cref{th:nopres} that the only objects in $\cW^*_{1,\bG}$ carrying a trivial $\bG$-grading that are presentable (for any cardinal) are $\{0\}$ and $\bC$. This finishes the proof, since in a locally presentable category every object is presentable for {\it some} cardinal \cite[Remark 1.30 (1)]{ar}.
\end{proof}

\subsection{Back to monadic forgetful functors}\label{subse:wmnd}

It is natural to consider, at this stage, analogues of \Cref{th:cbanalg} and \Cref{cor:frgmnd} in the $W^*$setup.

\begin{theorem}\label{th:wtocmnd}
  The forgetful functor $G:\cW^*_1\to \cC^*_1$ satisfies the Crude Tripleability Theorem, so is in particular monadic.
\end{theorem}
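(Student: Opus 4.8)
The plan is to verify the three CTT conditions for $G:\cW^*_1\to\cC^*_1$ directly, in the same spirit as the proof of \Cref{th:cbanalg}, and then invoke the fact that CTT implies monadic. The only genuine work is condition (3); conditions (1) and (2) are essentially standard.

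\emph{Left adjoint.} First I would recall that $G$ has a left adjoint. The left adjoint sends a unital $C^*$-algebra $A$ to its \emph{enveloping von Neumann algebra} $A^{**}$ (equivalently, the von Neumann algebra generated by the universal representation of $A$); the universal property that a normal unital morphism $A^{**}\to M$ corresponds to an arbitrary unital $C^*$-morphism $A\to M$ for $M\in\cW^*_1$ is classical (see e.g. the standard references on $W^*$-algebras cited in \Cref{se:wast}, or one may phrase it via the solution-set / adjoint-functor theorem using \Cref{pr:wccmpl}). So condition (1) holds.

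\emph{Conservativity.} For condition (2), a morphism in $\cW^*_1$ is in particular a morphism in $\cC^*_1$, and a bijective $C^*$-morphism is automatically an isometric isomorphism; one checks its inverse is again normal (a bijective normal $*$-homomorphism of von Neumann algebras has normal inverse, since it is a $\sigma$-weak homeomorphism by the closed-range / polar-decomposition arguments for $W^*$-surjections, or simply because an isomorphism of von Neumann algebras preserves the unique predual). Hence $G$ reflects isomorphisms.

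\emph{Coequalizers.} The heart of the matter is condition (3): $\cW^*_1$ has coequalizers of reflexive pairs whose $G$-image has a coequalizer in $\cC^*_1$, and $G$ preserves those coequalizers. In fact I expect the cleaner route — paralleling \Cref{le:crcoeq} — is to show that $G$ \emph{creates} the relevant coequalizers, or at least that for a parallel pair $f,g:A\to B$ in $\cW^*_1$ the coequalizer constructions in $\cW^*_1$ and in $\cC^*_1$ agree. Recall from the discussion preceding \Cref{pr:wccmpl} that the $\cW^*_1$-coequalizer of $f,g$ is $B/J$ where $J\trianglelefteq B$ is the $\sigma$-weakly closed $*$-ideal generated by $\{f(a)-g(a):a\in A\}$, while the $\cC^*_1$-coequalizer is $B/I$ with $I$ the norm-closed $*$-ideal generated by the same set. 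The key fact to establish is that when $B$ is a von Neumann algebra, $B/I$ is \emph{again} a von Neumann algebra and the quotient map $B\to B/I$ is normal — this is not true for a general norm-closed ideal, so here is where the hypothesis that $G\partial_i$ \emph{has} a coequalizer (i.e. that the induced $C^*$-quotient is well-behaved) must be used. The mechanism: a $\sigma$-weakly closed $*$-ideal $J$ in a von Neumann algebra $B$ is of the form $Bz$ for a central projection $z$, so $B/J\cong B(1-z)$ is a von Neumann algebra; one then argues that the norm-closed ideal $I$ generated by the relations, sitting inside $J=\overline{I}^{\,\sigma\text{-weak}}$, actually already equals $J$ \emph{whenever $B/I$ is a $C^*$-algebra admitting the relevant coequalizer property} — or, more robustly, one shows directly that the $\cW^*_1$-coequalizer $B\to B/J$ is the coequalizer of $f,g$ in $\cC^*_1$ as well, by checking that any $\cC^*_1$-morphism $B\to C$ coequalizing $f,g$ factors through $B/J$; but such a $C$ need not be a von Neumann algebra, so instead one precomposes with the observation that $B\to B/J=B(1-z)$ is a split surjection of von Neumann algebras, hence in particular a $C^*$-quotient, and that the relations $f(a)-g(a)$ already generate $J$ as a norm-closed ideal (this last is the real content: the norm-closure and $\sigma$-weak closure of the ideal generated by a set of the form $\{f(a)-g(a)\}$ coincide because the $\sigma$-weakly closed ideal $Bz$ is generated, even as a norm-closed ideal, by any element whose support projection is $z$, and such elements can be produced from the relations when $A\neq 0$). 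I would then conclude that the $\cC^*_1$-coequalizer of $Gf,Gg$, when it exists, coincides with $G$ applied to the $\cW^*_1$-coequalizer, giving both existence (condition (3), first half) and preservation (second half).

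\emph{Main obstacle.} The step I expect to be delicate is precisely the comparison $I = J$, i.e. that the norm-closed ideal generated by the relations already coincides with its $\sigma$-weak closure. In general norm-closed and $\sigma$-weakly closed ideals differ wildly, so the argument must exploit that we only need this \emph{under the CTT hypothesis} that $Gf,Gg$ has a coequalizer in $\cC^*_1$ — which forces the $C^*$-quotient $B/I$ to be such that the universal map out of it lands among $C^*$-algebras, yet we want to recognize it as the von Neumann quotient. A clean way around this, which I would try first, is to avoid $I=J$ altogether: show $G$ \emph{creates} coequalizers of the pairs in question by observing that (a) any such coequalizer in $\cC^*_1$, being a quotient $B\twoheadrightarrow B/I$ of a von Neumann algebra by a norm-closed $*$-ideal that contains enough relations to force $B/I$ to inherit a predual, must coincide with the split von Neumann quotient $B\twoheadrightarrow B(1-z)$; and (b) the latter is manifestly a coequalizer in $\cW^*_1$. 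If that creation argument goes through verbatim as in \Cref{le:crcoeq}, the proof reduces to a single clean lemma and the three CTT bullets fall out immediately; that is the organization I would aim for.
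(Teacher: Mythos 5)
Your conditions (1) and (2) are fine and match the paper: the left adjoint is the enveloping von Neumann algebra $A\mapsto A^{**}$, and isomorphism-reflection follows from automatic normality of $*$-isomorphisms between von Neumann algebras (the paper cites Tomiyama). The genuine gap is in condition (3). Your central plan --- to show that for a parallel pair $f,g:A\to B$ in $\cW^*_1$ the $\cC^*_1$- and $\cW^*_1$-coequalizer constructions agree, i.e.\ that the norm-closed ideal $I$ generated by $\{f(a)-g(a)\}$ already equals its $\sigma$-weak closure $J$, so that $G$ ``creates'' the relevant coequalizers in parallel with \Cref{le:crcoeq} --- is false, and the paper itself records the counterexample (\Cref{re:notallcoeq}): take $B=B(\cH)$ with $\dim\cH=\aleph_0$, $\partial_0=\id$ and $\partial_1$ conjugation by $1-2p$ for a rank-one projection $p$. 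There $I=K(\cH)$ and the $\cC^*_1$-coequalizer is the Calkin algebra (not a von Neumann algebra, and the quotient map is not normal), while $J=B(\cH)$ and the $\cW^*_1$-coequalizer is $0$. This also refutes your specific mechanism: a $\sigma$-weakly closed ideal $Bz$ is \emph{not} generated as a norm-closed ideal by an arbitrary element with support projection $z$ (a positive compact operator with trivial kernel has support projection $1$ yet norm-generates only $K(\cH)$), and likewise your step (a), that a $C^*$-quotient of a von Neumann algebra coequalizing the pair ``must inherit a predual,'' fails for the Calkin quotient. Nor can you lean on the CTT proviso ``for which $G\partial_i$ has a coequalizer'': $\cC^*_1$ is cocomplete, so that proviso is vacuous, and the only real restriction in condition (3) is reflexivity --- which your argument never uses.

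Reflexivity is exactly what the paper exploits, and it buys something much stronger than an ideal comparison: if $\partial_0,\partial_1:A\to B$ admit a common section $t$ (so $\partial_0t=\partial_1t=\id_B$), the idempotents $e_i=t\partial_i$ satisfy $e_0e_1=e_1$ and $e_1e_0=e_0$; since $t$ is injective, $\ker e_i=\ker\partial_i$, and these kernels are of the form $z_iA$ for central projections $z_i$, so the displayed relations force $z_0=z_1=:z$. One then checks, using injectivity of the $\partial_i$ on $(1-z)A$ and of $(1-z)t(\cdot)$, that $\partial_0=\partial_1$ on $(1-z)A$ and hence everywhere. Thus every reflexive pair in $\cW^*_1$ is degenerate: its coequalizer is $\id_B$, which is preserved by any functor (and is also the $\cC^*_1$-coequalizer of the now-equal pair). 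The repair to your proof is therefore not a sharpened $I=J$ lemma --- no such lemma is true --- but this structural observation about reflexive pairs of normal unital morphisms.
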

\begin{proof}
  We verify the conditions listed in \Cref{def:tt}.
  \begin{enumerate}[(1)]
  \item The embedding $\cW^*_1\to \cC^*_1$ does indeed have a left adjoint, namely the double-dual functor $A\mapsto A^{**}$ \cite[Theorem III.2.4]{tak1}. 
  \item Isomorphism-reflection is well known, and in fact much more can be said: a purely algebraic isomorphism of $*$-algebras between two $W^*$-algebras is norm-continuous and normal \cite[Theorem 3]{tomi1}.
  \item $\cW^*_1$ being cocomplete (\Cref{pr:wccmpl}), {\it all} coequalizers exist (not just reflexive ones). 
  \end{enumerate}
  It thus remains to check that $G$ preserves reflexive coequalizers. Consider, then, a reflexive pair $\partial_i:A\to B$, $i=0,1$ in $\cW^*_1$, with a common right inverse $t:B\to A$:
  \begin{equation}\label{eq:cominv}
    \partial_1 t = \id_B = \partial_0 t. 
  \end{equation}
  The idempotent endomorphisms
  \begin{equation*}
    e_i := t\partial_i,\ i=0,1 
  \end{equation*}
  of $A$ satisfy
  \begin{equation}\label{eq:rcpr}
    e_0 e_1 = e_1,\quad e_1 e_0=e_0
  \end{equation}
  as a consequence of \Cref{eq:cominv}. Because $t$ is one-to-one we have $\ker e_i = \ker\partial_i$, and furthermore these kernels are the ideals generated by central projections $z_i\in A$ \cite[Part I, \S 3.3, Corollary 3]{dixw}. \Cref{eq:rcpr} says that each $e_i$ is faithful on $(1-z_j)A$, $j\ne i$, and hence each of the projections $z_i$, $i=0,1$ dominates the other. But this means that
  \begin{equation*}
    z_1 = z_0 =: z,\quad\text{so that }\ker\partial_1 = \ker\partial_0 = zA.
  \end{equation*}
  The conclusion, then, is that
  \begin{itemize}
  \item the maps $\partial_i$, $i=0,1$ both annihilate $zA$;
  \item and they agree on the complementary subspace $(1-z)A$: for each $i=0,1$ and $a\in (1-z)A$, the elements
    \begin{equation*}
      a\quad\text{and}\quad (1-z) (t\partial_i)(a)
    \end{equation*}
    are both mapped to the same element $\partial_i(a)\in B$. Since
    \begin{itemize}
    \item $\partial_i$ is injective on $(1-z)A$;
    \item and $(1-z)t(\cdot):B\to (1-z)A$ is injective (being a right inverse to the restrictions $\partial_i|_{(1-z)A}$),
    \end{itemize}
    we have $\partial_1(a)=\partial_1(a)$.
  \end{itemize}
  This means that the coequalizer of the pair $(\partial_i)_{i=0,1}$ is precisely $\id_B:B\to B$, clearly preserved by the forgetful functor.
\end{proof}

\begin{remark}\label{re:notallcoeq}
  In the latter part of the proof of \Cref{th:wtocmnd}, it was crucial to restrict the discussion to {\it reflexive}-coequalizer preservation: the forgetful functor $G:\cW^*_1\to \cC^*_1$ does not preserve {\it arbitrary} coequalizers.

  Consider, for instance, the pair $\partial_i:B(\cH)\to B(\cH)$, $i=0,1$ for a countably-infinite-dimensional Hilbert space $\cH$, where $\partial_0=\id$ and $\partial_1$ is conjugation by the unitary
  \begin{equation*}
    u:=1-2p,\quad p\in B(\cH)\ \text{a projection of rank 1}.
  \end{equation*}
  Because $u$ differs from the identity by a {\it compact} \cite[\S I.8]{blk} operator, every difference
  \begin{equation*}
    \partial_1(a)-\partial_0(a),\ a\in A
  \end{equation*}
  belongs to the ideal \cite[I.8.1.2]{blk} $K(\cH)\subset B(\cH)$ of compact operators.

  Now, on the one hand, because $\dim\cH=\aleph_0$ (as a Hilbert space), $K(\cH)$ is the unique proper non-zero ideal of $B(\cH)$ \cite[Proposition III.1.7.11]{blk} so the $C^*$ cokernel of $(\partial_i)_{i=0,1}$ is the surjection
  \begin{equation*}
    B(\cH)\to B(\cH)/K(\cH)
  \end{equation*}
  onto the {\it Calkin algebra} \cite[I.8.2]{blk} of $\cH$.

  On the other hand, $B(\cH)$ is a {\it factor} \cite[\S I.9.1.5]{blk} (its center is $\bC$), so it has no proper, non-zero weak$^*$-closed ideals \cite[Part I, \S 3.3, Corollary 3]{dixw}. This means that the cokernel of $(\partial_i)_{i=0,1}$ in $\cW^*_1$ is the zero algebra.
\end{remark}

\begin{corollary}\label{cor:wallmnd}
  The forgetful functors from $\cW^*_1$ to any of the categories $\cat{BanAlg}^*_1$, $\cat{BanAlg}_1$ or $\cat{Ban}$ are all monadic.
\end{corollary}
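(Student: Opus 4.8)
The plan is to argue exactly as in the proof of \Cref{cor:frgmnd}, feeding in \Cref{th:wtocmnd} where that proof used \Cref{th:cbanalg}. Write $U\colon \cW^*_1\to\cC^*_1$ for the forgetful functor, which is CTT by \Cref{th:wtocmnd}. Each of the three forgetful functors named in the statement factors through $\cC^*_1$: a $W^*$-algebra is in particular a unital $C^*$-algebra, and a normal unital $*$-morphism is in particular a unital $*$-morphism, so the forgetful functor $\cW^*_1\to\cat{BanAlg}^*_1$ coincides with $U$ followed by the forgetful functor $\cC^*_1\to\cat{BanAlg}^*_1$, and likewise with $\cat{BanAlg}^*_1$ replaced by $\cat{BanAlg}_1$ or by $\cat{Ban}$.

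Next I would invoke \Cref{cor:frgmnd} (the part dealing with $\cC^*_1$), which tells us that the three forgetful functors $\cC^*_1\to\cat{BanAlg}^*_1$, $\cC^*_1\to\cat{BanAlg}_1$ and $\cC^*_1\to\cat{Ban}$ are all monadic. Since $U$ is CTT, \Cref{le:cttptt} — applied with $G''=U$ and with $G'$ the relevant forgetful functor out of $\cC^*_1$ — yields at once that each of the three composites $\cW^*_1\to\cat{BanAlg}^*_1$, $\cW^*_1\to\cat{BanAlg}_1$, $\cW^*_1\to\cat{Ban}$ is monadic, which is the assertion of the corollary.

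I do not anticipate any genuine obstacle here: the whole argument is a short bookkeeping step assembling \Cref{th:wtocmnd}, \Cref{cor:frgmnd} and \Cref{le:cttptt}. The only point requiring a moment's attention is the orientation in which \Cref{le:cttptt} is applied — it is the inner (first-applied) functor $U\colon\cW^*_1\to\cC^*_1$ that must be CTT, while the outer functor out of $\cC^*_1$ need only be monadic — which is precisely the situation at hand.
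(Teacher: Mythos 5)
Your proposal is correct and is essentially identical to the paper's own proof: the paper also decomposes each forgetful functor as the CTT functor $\cW^*_1\to\cC^*_1$ of \Cref{th:wtocmnd} followed by the monadic forgetful functor out of $\cC^*_1$ from \Cref{cor:frgmnd}, and concludes via \Cref{le:cttptt}. Your remark about the orientation of \Cref{le:cttptt} (inner functor CTT, outer functor monadic) is exactly the right point of care and matches the paper's usage.
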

\begin{proof}
  As in the proof of \Cref{cor:frgmnd}: said functors decompose as 
  \begin{itemize}
  \item $\cW^*_1\to \cC^*_1$, which is CTT \Cref{th:wtocmnd};
  \item followed by forgetful functors from $\cC^*_1$ to the respective categories, all of which are monadic \Cref{cor:frgmnd}.
  \end{itemize}
  \Cref{le:cttptt} thus applies to conclude. 
\end{proof}



\addcontentsline{toc}{section}{References}

\Addresses

\end{document}